\definecolor{gray5}{gray}{0.8}
\definecolor{gray1}{gray}{0.4}
\definecolor{gray2}{gray}{0.6}
\definecolor{gray3}{gray}{0.7}
\definecolor{gray4}{gray}{0.3}
\numberwithin{equation}{section}
\newtheorem     {thm}{Theorem}[section]
\newtheorem     {lem}[thm]{Lemma}
\newtheorem     {prop}[thm]{Proposition}
\newtheorem     {cor}[thm]{Corollary}
\newtheorem     {rem}[thm]{Remark}
\begin{document}

\title{Central limit theorem for supercritical binary homogeneous Crump-Mode-Jagers processes} 
\date{}
\author{\textsc{Benoit Henry$^{1,2}$}}

\footnotetext[1]{TOSCA project-team, INRIA Nancy -- Grand Est, IECL -- UMR 7502,
  Nancy-Universit\'e, Campus scientifique, B.P.\ 70239, 54506 Vand\oe uvre-l\`es-Nancy Cedex,
  France}

\footnotetext[2]{IECL -- UMR 7502,
Nancy-Universit\'e, Campus scientifique, B.P.\ 70239, 54506 Vand\oe uvre-l\`es-Nancy Cedex,
  France, E-mail: \texttt{benoit.henry@univ-lorraine.fr}
}
\maketitle

\begin{abstract}

We consider a supercritical general branching population where the lifetimes of individuals are i.i.d.\ with arbitrary distribution and each individual gives birth to new individuals at Poisson times independently from each others. The population counting process of such population is a known as binary homogeneous Crump-Jargers-Mode process. It is known that such processes converges almost surely when correctly renormalized. In this paper, we study the error of this convergence. To this end, we use classical renewal theory and recent works \cite{L10,CL1,CH} on this model to obtain the moments of the error. Then, we can precisely study the asymptotic behaviour of these moments thanks to L\'evy processes theory. These results in conjunction with a new decomposition of the splitting trees allow us to obtain a central limit theorem.

\end{abstract}          
\bigskip

\noindent {\it MSC 2000 subject classifications:} Primary 60J80; secondary 
 92D10, 60J85, 60G51, 60K15, 60F05.\\

\noindent \textit{Key words and phrases.}  branching process  -- splitting tree -- Crump--Mode--Jagers process -- linear birth--death process -- L\'evy processes -- scale function -- Central Limit Theorem.

\section{Introduction}
\label{sec:intro}
In this work, we consider a general branching population where individuals live and reproduce independently from each other. Their lifetimes follow an arbitrary distribution $\mathbb{P}_{V}$ and the births occur at Poisson times with constant rate $b$. The genealogical tree induced by this population is called a \emph{splitting tree} \cite{GJ,GK,L10} and is of main importance in the study of the model.

The population counting process $N_{t}$ (giving the number of living individuals at time $t$) is a binary homogeneous Crump-Mode-Jagers (CMJ) process. Crump-Mode-Jagers processes are  very general branching processes. Such processes are known to have many applications. For instance, in biology, they have recently been used to model spreading diseases (see \cite{cmjref1,cmjref2}). Another example of application appears in queuing theory (see \cite{cmjref3} and \cite{cmjref4}).

In \cite{N}, Nerman shows very general conditions for the almost sure convergence of general CMJ processes. In the supercritical case, it is known that the quantity $e^{-\alpha t}N_{t}$, where $\alpha$ is the Malthusian parameter of the population, converges almost surely. This result has been proved in \cite{Rich} using Jagers-Nerman's theory of general branching processes counted by random characteristics. Another proof can be found in \cite{CH}, using only elementary probabilistic tools, relying on fluctuation analysis of the process. 

Our purpose in this work is to investigate the behaviour of the error in the aforementioned convergence.
Many papers studied the second order behaviour of converging branching processes. Early works investigate the Galton-Watson case. In \cite{heyde} and \cite{heyde2}, Heyde obtained rates of convergence and get central limit theorems in the case of supercritical Galton-Watson when the limit has finite variance. Later, in \cite{asm}, Asmussen obtained the polynomial convergence rates in the general case. 
In our model, the particular case when the individuals never die (i.e.\ $\mathbb{P}_{V}=\delta_{\infty}$, implying that the population counting process is a Markovian Yule process) has already been studied. More precisely, Athreya showed in \cite{ath}, for a Markovian branching process $Z$ with appropriate conditions, and such that $e^{-\alpha t}Z_{t}$ converges to some random variable $W$ a.s., that the error
\[
\frac{Z_{t}-e^{\alpha t}W}{\sqrt{Z_{t}}},
\]
converges in distribution to some Gaussian random variable. 

In the case of general CMJ processes, there was no similar result although very recent work of Iksanov and Meiners \cite{iks} gives sufficient conditions for the error terms in the convergence of supercritical general branching processes to be $o(t^{\delta})$ in a very general background (arbitrary birth point process). Although our model is more specific, we give slightly more precise results. Indeed, we give the exact rate of convergence, $e^{\frac{\alpha}{2}t}$, and characterized the limit. Moreover, we believe that our method could apply to other general branching processes counted by random characteristics, as soon as the birth point process is Poissonian.

The first step of the method is to obtain informations on the moments of the error in the a.s.\ convergence of the process. Using the renewal structure of the tree and formulae on the expectation of a random integral, we are able to express the moments of the error in terms of the scale function of a L\'evy process. This process is known to be the contour process of the splitting tree as constructed in \cite{L10}. The asymptotic behaviours of the moments are then precisely studied thanks to the analysis of the ladder height process associated to a similar L\'evy process and to the Wiener-Hopf factorization. The second ingredient is a decomposition of the splitting tree into subtrees whose laws are characterized by the overshoots of the contour process over a fixed level. Finally, the error term can be decomposed as the sum of the error made in each subtrees. Our controls on the moments ensure that the error in each subtree decreases fast enough compared to the growth of the population (see Section \ref{sec:strategy} for details).

The first section is devoted to the introduction of main tools used in this work. The first part recall the basic facts on splitting trees which are essentially borrowed from \cite{L10,CL1,CL2,CLR,CH}. The second part recall some classical facts on renewal equations and the last part gives a useful Lemma on the expectation of a random integral.
Section \ref{sec:results} is devoted to the statement of Theorem \ref{thm:tclN} which is a CLT for the population counting process $N_{t}$. Section \ref{sec:strategy} details the main lines of the method. Theorem \ref{thm:tclN} is finally proved in Section \ref{proof:th1}. 

\section{Splitting trees and preliminary results}
This section is devoted to the statement of results which are constantly used in the sequel. The first subsection presents the model and states results on splitting trees coming from \cite{L10,CL1,CL2,Rich,CH}. The second subsection recalls some well-known results on renewal equations. Finally, the last subsection is devoted to the statement and the proof of a lemma for the expectation of random integrals, which is constantly used in the sequel.
\subsection{Splitting trees}
\label{sec:models}
In this paper, we study a model of population dynamics called a splitting tree. We consider a branching tree (see Figure \ref{fig: tree}), where individuals live and reproduce independently from each other. Their lifetimes are i.i.d.\ following an arbitrary distribution $\mathbb{P}_{V}$. Given the lifetime of an individual, he gives birth to new individuals at Poisson times with  positive constant rate $b$ until his death independently from the other individuals. We also suppose that the population starts with a single individual called the {\it root}.

The finite measure $\Lambda:=b\mathbb{P}_{V}$ is called the \emph{lifespan measure}, and plays an important role in the study of the model.

\begin{figure}[ht]
\unitlength 2mm 
\linethickness{0.4pt}

\unitlength 1.5mm 
\linethickness{0.6pt}
\ifx\plotpoint\undefined\newsavebox{\plotpoint}\fi 
\begin{picture}(68,40)(-10,0)
\put(15,5){\vector(0,1){35}}
\put(13.93,5.93){\line(1,0){.9836}}
\put(15.897,5.93){\line(1,0){.9836}}
\put(17.864,5.93){\line(1,0){.9836}}
\put(19.831,5.93){\line(1,0){.9836}}
\put(21.799,5.93){\line(1,0){.9836}}
\put(23.766,5.93){\line(1,0){.9836}}
\put(25.733,5.93){\line(1,0){.9836}}
\put(27.7,5.93){\line(1,0){.9836}}
\put(29.667,5.93){\line(1,0){.9836}}
\put(31.635,5.93){\line(1,0){.9836}}
\put(33.602,5.93){\line(1,0){.9836}}
\put(35.569,5.93){\line(1,0){.9836}}
\put(37.536,5.93){\line(1,0){.9836}}
\put(39.503,5.93){\line(1,0){.9836}}
\put(41.471,5.93){\line(1,0){.9836}}
\put(43.438,5.93){\line(1,0){.9836}}
\put(45.405,5.93){\line(1,0){.9836}}
\put(47.372,5.93){\line(1,0){.9836}}
\put(49.34,5.93){\line(1,0){.9836}}
\put(51.307,5.93){\line(1,0){.9836}}
\put(53.274,5.93){\line(1,0){.9836}}
\put(55.241,5.93){\line(1,0){.9836}}
\put(57.208,5.93){\line(1,0){.9836}}
\put(59.176,5.93){\line(1,0){.9836}}
\put(61.143,5.93){\line(1,0){.9836}}
\put(63.11,5.93){\line(1,0){.9836}}
\put(65.077,5.93){\line(1,0){.9836}}
\put(67.044,5.93){\line(1,0){.9836}}
\put(69.012,5.93){\line(1,0){.9836}}
\put(70.979,5.93){\line(1,0){.9836}}
\put(72.946,5.93){\line(1,0){.9836}}
\put(19,20){\line(0,-1){14}}
\put(25,28){\line(0,-1){10}}
\put(31,23){\line(0,1){8}}
\put(36,29){\line(0,1){7}}
\put(45,26){\line(0,1){12}}
\put(53,11){\line(0,1){9}}
\put(59,16){\line(0,1){19}}
\put(24.93,17.93){\line(-1,0){.8571}}
\put(23.215,17.93){\line(-1,0){.8571}}
\put(21.501,17.93){\line(-1,0){.8571}}
\put(19.787,17.93){\line(-1,0){.8571}}
\put(30.93,22.93){\line(-1,0){.8571}}
\put(29.215,22.93){\line(-1,0){.8571}}
\put(27.501,22.93){\line(-1,0){.8571}}
\put(25.787,22.93){\line(-1,0){.8571}}
\put(35.93,28.93){\line(-1,0){.8333}}
\put(34.263,28.93){\line(-1,0){.8333}}
\put(32.596,28.93){\line(-1,0){.8333}}
\put(44.93,25.93){\line(-1,0){.9333}}
\put(43.063,25.93){\line(-1,0){.9333}}
\put(41.196,25.93){\line(-1,0){.9333}}
\put(39.33,25.93){\line(-1,0){.9333}}
\put(37.463,25.93){\line(-1,0){.9333}}
\put(35.596,25.93){\line(-1,0){.9333}}
\put(33.73,25.93){\line(-1,0){.9333}}
\put(31.863,25.93){\line(-1,0){.9333}}
\put(58.93,15.93){\line(-1,0){.8571}}
\put(57.215,15.93){\line(-1,0){.8571}}
\put(55.501,15.93){\line(-1,0){.8571}}
\put(53.787,15.93){\line(-1,0){.8571}}
\put(52.93,10.93){\line(-1,0){.9962}}
\put(50.937,10.93){\line(-1,0){.9962}}
\put(48.945,10.93){\line(-1,0){.9962}}
\put(46.953,10.93){\line(-1,0){.9962}}
\put(44.96,10.93){\line(-1,0){.9962}}
\put(42.968,10.93){\line(-1,0){.9962}}
\put(40.976,10.93){\line(-1,0){.9962}}
\put(38.983,10.93){\line(-1,0){.9962}}
\put(36.991,10.93){\line(-1,0){.9962}}
\put(34.999,10.93){\line(-1,0){.9962}}
\put(33.006,10.93){\line(-1,0){.9962}}
\put(31.014,10.93){\line(-1,0){.9962}}
\put(29.021,10.93){\line(-1,0){.9962}}
\put(27.029,10.93){\line(-1,0){.9962}}
\put(25.037,10.93){\line(-1,0){.9962}}
\put(23.044,10.93){\line(-1,0){.9962}}
\put(21.052,10.93){\line(-1,0){.9962}}
\put(67,23){\line(0,1){5}}
\put(73,25){\line(0,1){11}}
\put(66,12){\line(0,1){4}}
\put(65.93,11.93){\line(-1,0){.9286}}
\put(64.073,11.93){\line(-1,0){.9286}}
\put(62.215,11.93){\line(-1,0){.9286}}
\put(60.358,11.93){\line(-1,0){.9286}}
\put(58.501,11.93){\line(-1,0){.9286}}
\put(56.644,11.93){\line(-1,0){.9286}}
\put(54.787,11.93){\line(-1,0){.9286}}
\put(66.93,22.93){\line(-1,0){.8889}}
\put(65.152,22.93){\line(-1,0){.8889}}
\put(63.374,22.93){\line(-1,0){.8889}}
\put(61.596,22.93){\line(-1,0){.8889}}
\put(59.819,22.93){\line(-1,0){.8889}}
\put(72.93,24.93){\line(-1,0){.8571}}
\put(71.215,24.93){\line(-1,0){.8571}}
\put(69.501,24.93){\line(-1,0){.8571}}
\put(67.787,24.93){\line(-1,0){.8571}}
\put(13,39){\makebox(0,0)[cc]{$t$}}

\end{picture}
\caption{ Graphical representation of a Splitting tree. The vertical axis represents the biological time and the horizontal axis has no biological meaning. The vertical lines represent the individuals, their lengths correspond to their lifetimes. The dashed lines denote the filiations between individuals. }
\label{fig: tree}
\end{figure}
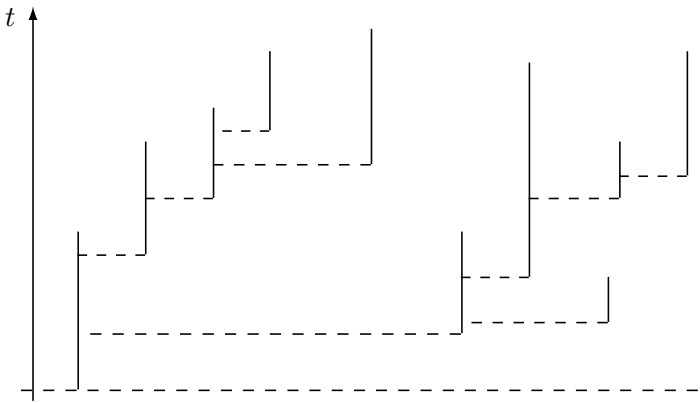

In \cite{L10}, Lambert introduces a contour process $Y$, which codes for the splitting tree. Suppose we are given a tree $\mathbb{T}$, seen as a subset of $\mathbb{R}\times\left(\cup_{k\geq 0}\mathbb{N}^{k}\right)$ with some compatibility conditions (see \cite{L10}). On this object, Lambert constructs a Lebesgue measure $\lambda$ and a total order relation  $\preceq $ which can be roughly summarized as follows: let $x,y$ in $\mathbb{T}$, the point of birth of the lineage of $x$ during the lifetime of the root split the tree in two connected components, then $y\preceq x$ if $y$ belong to the same component as $x$ but is not an ancestor of $x$ (see Figure \ref{fig:order}).

If we assume that $\lambda(\mathbb{T})$ is finite, then the application,
\[
\begin{array}{cccc}
\varphi:&\mathbb{T} & \rightarrow & [0,\lambda\left(\mathbb{T}\right)], \\
& x & \rightarrow & \lambda\left(\{y  \mid  \ y\preceq x\} \right), \\
\end{array}
\]
is a bijection. Moreover, in a graphical sens (see Figure \ref{fig:order}), $\varphi(x)$ measures the length of the part of the tree which is above the lineage of $x$.
\begin{figure}[ht]
\unitlength 2mm 
\linethickness{0.4pt}

\unitlength 1.7mm 
\linethickness{0.8pt}
\begin{picture}(15,45)(-30,0)

\put(0,0){\line(0,1){15}}
\color{gray5}
\put(0,15){\line(0,1){15}}
\color{black}
\put(0,0){\line(1,0){2}}
\put(0,30){\line(1,0){2}}
\multiput(0,15)(1,0){3}{\line(1,0){0.5}}

\put(3.5,15){\line(0,1){5}}
\color{gray5}
\put(3.5,20){\line(0,1){15}}
\color{black}
\put(2.5,15){\line(1,0){2}}
\put(2.5,35){\line(1,0){2}}
\multiput(3.5,30)(1,0){3}{\line(1,0){0.5}}
\color{gray5}
\put(6.5,30){\line(0,1){10}}
\color{black}
\put(5.5,30){\line(1,0){2}}
\put(5.5,40){\line(1,0){2}}
\multiput(6.5,35)(1,0){3}{\line(1,0){0.5}}
\color{gray5}
\put(9.5,35){\line(0,1){10}}
\color{black}
\put(8.5,35){\line(1,0){2}}
\put(8.5,45){\line(1,0){2}}
\multiput(6.5,33)(1,0){6}{\line(1,0){0.5}}
\color{gray5}
\put(12.5,33){\line(0,1){5}}
\color{black}
\put(11.5,33){\line(1,0){2}}
\put(11.5,38){\line(1,0){2}}
\multiput(12.5,34)(1,0){3}{\line(1,0){0.5}}
\color{gray5}
\put(15.5,34){\line(0,1){10}}
\color{black}
\put(14.5,34){\line(1,0){2}}
\put(14.5,44){\line(1,0){2}}
\put(19.5,22){\circle*{1}}
\put(20.5,22){\makebox(1,1){$x$}}
\multiput(3.5,20)(1,0){15}{\line(1,0){0.5}}
\color{gray5}
\put(19.5,22){\line(0,1){8}}
\color{black}
\put(19.5,20){\line(0,1){2}}

\put(18.5,20){\line(1,0){2}}
\put(18.5,30){\line(1,0){2}}
\multiput(19.5,28)(1,0){3}{\line(1,0){0.5}}
\color{gray5}
\put(23.5,28){\line(0,1){15}}
\color{black}
\put(22.5,28){\line(1,0){2}}
\put(22.5,43){\line(1,0){2}}
\multiput(23.5,38)(1,0){3}{\line(1,0){0.5}}
\color{gray5}
\put(27.5,38){\line(0,1){6}}
\color{black}
\put(26.5,38){\line(1,0){2}}
\put(26.5,44){\line(1,0){2}}
\multiput(0,5)(1,0){31}{\line(1,0){0.5}}
\put(31.5,5){\line(0,1){15}}
\put(30.5,5){\line(1,0){2}}
\put(30.5,20){\line(1,0){2}}
\multiput(31.5,17)(1,0){3}{\line(1,0){0.5}}
\put(35.5,17){\line(0,1){20}}
\put(34.5,17){\line(1,0){2}}
\put(34.5,37){\line(1,0){2}}
\multiput(35.5,33)(1,0){3}{\line(1,0){0.5}}
\put(39.5,33){\line(0,1){10}}
\put(38.5,33){\line(1,0){2}}
\put(38.5,43){\line(1,0){2}}
\multiput(31.5,7)(1,0){10}{\line(1,0){0.5}}
\put(42.5,7){\line(0,1){10}}
\put(41.5,7){\line(1,0){2}}
\put(41.5,17){\line(1,0){2}}
\multiput(42.5,10)(1,0){3}{\line(1,0){0.5}}
\put(46.5,10){\line(0,1){4}}
\put(45.5,10){\line(1,0){2}}
\put(45.5,14){\line(1,0){2}}
\end{picture}
\caption{In gray, the set $\left\{y\in\mathbb{T}\mid y\preceq x \right\}$ }
\label{fig:order}
\end{figure}
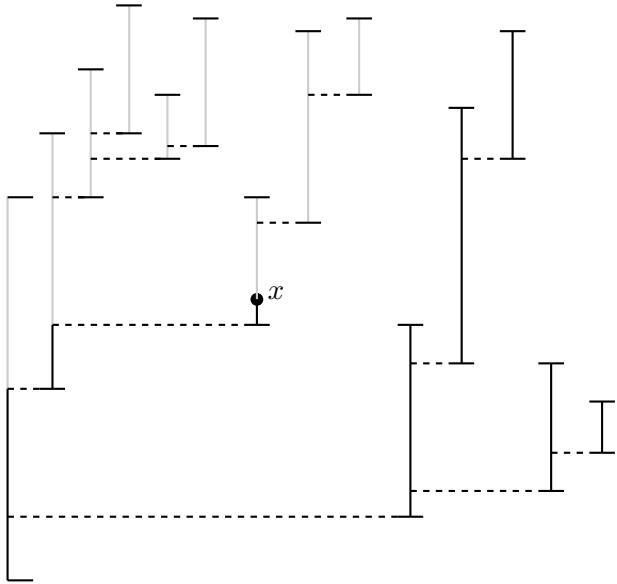
The contour process is then defined, for all $s$, by, 
\[
Y_{s}:=\Pi_{\mathbb{R}}\left(\varphi^{-1}\left(s\right)\right),
\]
where $\Pi_{\mathbb{R}}$ is the projection from $\mathbb{R}\times\left(\cup_{k\geq 0}\mathbb{N}^{k}\right)$ to $\mathbb{R}$. 

In a more graphical way, the contour process can be seen as the graph of an exploration process of the tree:
it begins at the top of the root and decreases with slope $-1$ while running back along the life of the root until it meets a birth. The contour process then jumps at the top of the life interval of the child born at this time and continues its exploration as before. If the exploration process does not encounter a birth when exploring the life interval of an individual, it goes back to its parent and continues the exploration from the birth-date of the just left individual (see Figure \ref{fig:contour}).
It is then readily seen that the intersections of the contour process with the line of ordinate $t$ are in one-to-one correspondence with the individuals in the tree alive at time $t$. 

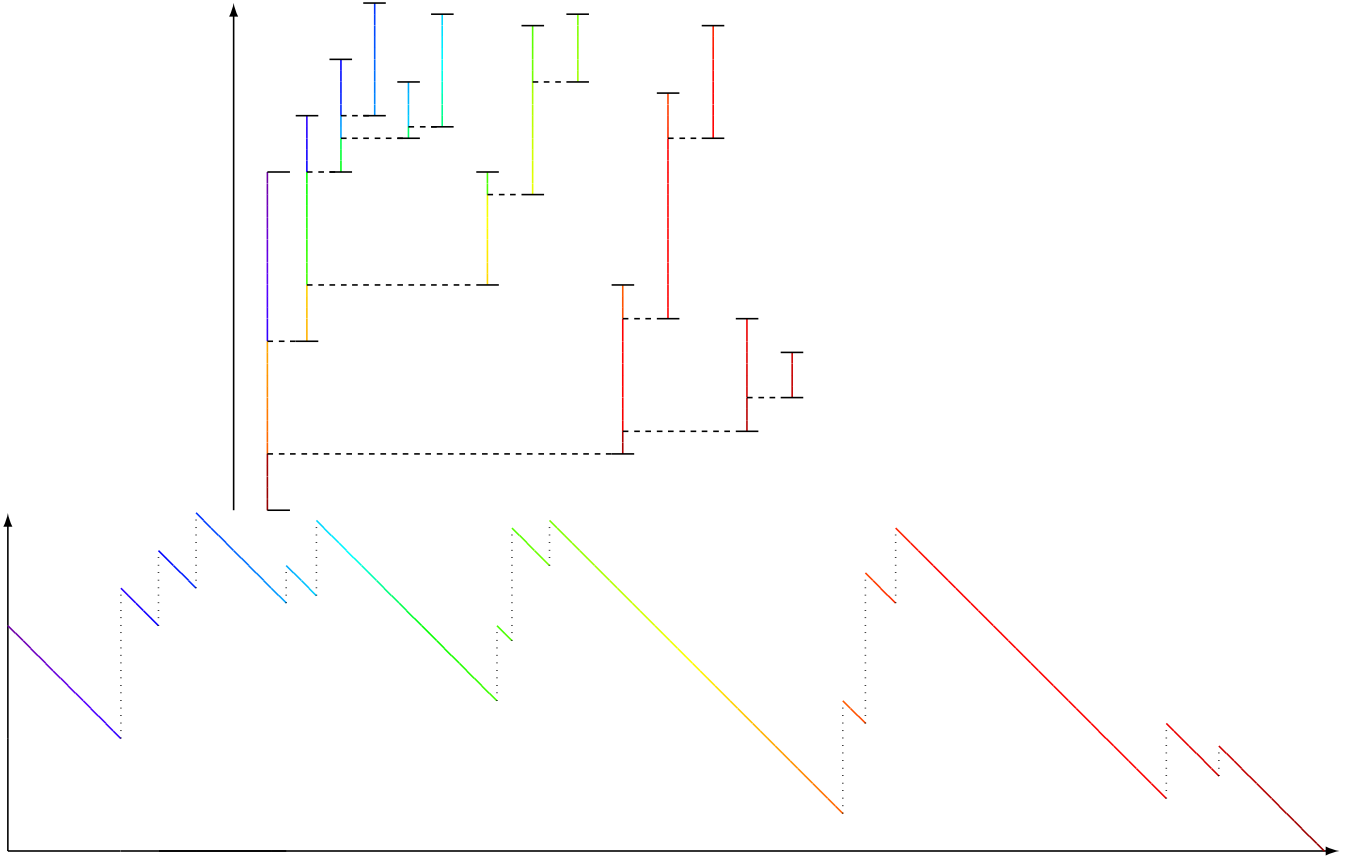
\begin{figure}[ht]
\unitlength 1.5mm 
\linethickness{0.6pt}
\begin{picture}(250,50)(-20,0)
\put(0,0){\vector(0,1){45}}
\put(3,0){
\newcount\compteur \compteur=400
\multiput(0,30)(0,-1){15}{\color[wave]{\the\compteur}\global\advance\compteur by 2\line(0,-1){1}}
\multiput(3.5,35)(0,-1){5}{\color[wave]{\the\compteur}\global\advance\compteur by 2\line(0,-1){1}}
\multiput(6.5,40)(0,-1){5}{\color[wave]{\the\compteur}\global\advance\compteur by 2\line(0,-1){1}}
\multiput(9.5,45)(0,-1){10}{\color[wave]{\the\compteur}\global\advance\compteur by 2\line(0,-1){1}}
\multiput(6.5,35)(0,-1){2}{\color[wave]{\the\compteur}\global\advance\compteur by 2\line(0,-1){1}}
\multiput(12.5,38)(0,-1){4}{\color[wave]{\the\compteur}\global\advance\compteur by 2\line(0,-1){1}}
\multiput(15.5,44)(0,-1){10}{\color[wave]{\the\compteur}\global\advance\compteur by 2\line(0,-1){1}}
\multiput(12.5,34)(0,-1){1}{\color[wave]{\the\compteur}\global\advance\compteur by 2\line(0,-1){1}}
\multiput(6.5,33)(0,-1){3}{\color[wave]{\the\compteur}\global\advance\compteur by 2\line(0,-1){1}}
\multiput(3.5,30)(0,-1){10}{\color[wave]{\the\compteur}\global\advance\compteur by 2\line(0,-1){1}}
\multiput(19.5,30)(0,-1){2}{\color[wave]{\the\compteur}\global\advance\compteur by 2\line(0,-1){1}}
\multiput(23.5,43)(0,-1){5}{\color[wave]{\the\compteur}\global\advance\compteur by 2\line(0,-1){1}}
\multiput(27.5,44)(0,-1){6}{\color[wave]{\the\compteur}\global\advance\compteur by 2\line(0,-1){1}}
\multiput(23.5,38)(0,-1){10}{\color[wave]{\the\compteur}\global\advance\compteur by 2\line(0,-1){1}}
\multiput(19.5,28)(0,-1){8}{\color[wave]{\the\compteur}\global\advance\compteur by 2\line(0,-1){1}}
\multiput(3.5,20)(0,-1){5}{\color[wave]{\the\compteur}\global\advance\compteur by 2\line(0,-1){1}}
\multiput(0,15)(0,-1){10}{\color[wave]{\the\compteur}\global\advance\compteur by 2\line(0,-1){1}}
\multiput(31.5,20)(0,-1){3}{\color[wave]{\the\compteur}\global\advance\compteur by 2\line(0,-1){1}}
\multiput(35.5,37)(0,-1){4}{\color[wave]{\the\compteur}\global\advance\compteur by 2\line(0,-1){1}}
\multiput(39.5,43)(0,-1){10}{\color[wave]{\the\compteur}\global\advance\compteur by 2\line(0,-1){1}}
\multiput(35.5,33)(0,-1){16}{\color[wave]{\the\compteur}\global\advance\compteur by 2\line(0,-1){1}}
\multiput(31.5,17)(0,-1){10}{\color[wave]{\the\compteur}\global\advance\compteur by 2\line(0,-1){1}}
\multiput(42.5,17)(0,-1){7}{\color[wave]{\the\compteur}\global\advance\compteur by 2\line(0,-1){1}}
\multiput(46.5,14)(0,-1){4}{\color[wave]{\the\compteur}\global\advance\compteur by 2\line(0,-1){1}}
\multiput(42.5,10)(0,-1){3}{\color[wave]{\the\compteur}\global\advance\compteur by 2\line(0,-1){1}}
\multiput(31.5,7)(0,-1){2}{\color[wave]{\the\compteur}\global\advance\compteur by 2\line(0,-1){1}}
\multiput(0,5)(0,-1){5}{\color[wave]{\the\compteur}\global\advance\compteur by 2\line(0,-1){1}}
\put(0,0){\line(1,0){2}}
\put(0,30){\line(1,0){2}}
\multiput(0,15)(1,0){3}{\line(1,0){0.5}}
\put(2.5,15){\line(1,0){2}}
\put(2.5,35){\line(1,0){2}}
\multiput(3.5,30)(1,0){3}{\line(1,0){0.5}}
\put(5.5,30){\line(1,0){2}}
\put(5.5,40){\line(1,0){2}}
\multiput(6.5,35)(1,0){3}{\line(1,0){0.5}}
\put(8.5,35){\line(1,0){2}}
\put(8.5,45){\line(1,0){2}}
\multiput(6.5,33)(1,0){6}{\line(1,0){0.5}}
\put(11.5,33){\line(1,0){2}}
\put(11.5,38){\line(1,0){2}}
\multiput(12.5,34)(1,0){3}{\line(1,0){0.5}}
\put(14.5,34){\line(1,0){2}}
\put(14.5,44){\line(1,0){2}}
\multiput(3.5,20)(1,0){15}{\line(1,0){0.5}}
\put(18.5,20){\line(1,0){2}}
\put(18.5,30){\line(1,0){2}}
\multiput(19.5,28)(1,0){3}{\line(1,0){0.5}}
\put(22.5,28){\line(1,0){2}}
\put(22.5,43){\line(1,0){2}}
\multiput(23.5,38)(1,0){3}{\line(1,0){0.5}}
\put(26.5,38){\line(1,0){2}}
\put(26.5,44){\line(1,0){2}}
\multiput(0,5)(1,0){31}{\line(1,0){0.5}}
\put(30.5,5){\line(1,0){2}}
\put(30.5,20){\line(1,0){2}}
\multiput(31.5,17)(1,0){3}{\line(1,0){0.5}}
\put(34.5,17){\line(1,0){2}}
\put(34.5,37){\line(1,0){2}}
\multiput(35.5,33)(1,0){3}{\line(1,0){0.5}}
\put(38.5,33){\line(1,0){2}}
\put(38.5,43){\line(1,0){2}}
\multiput(31.5,7)(1,0){10}{\line(1,0){0.5}}
\put(41.5,7){\line(1,0){2}}
\put(41.5,17){\line(1,0){2}}
\multiput(42.5,10)(1,0){3}{\line(1,0){0.5}}
\put(45.5,10){\line(1,0){2}}
\put(45.5,14){\line(1,0){2}}
}
\end{picture}
\unitlength 1mm
\begin{picture}(160,45)
\put(20,0){\vector(1,0){157}}
\put(0,15){\line(0,1){15}}

\put(0,0){\line(1,0){15}}

\put(15,0){\line(1,0){5}}

\put(20,0){\line(1,0){5}}

\put(25,0){\line(1,0){10}}

\put(35,0){\line(1,0){2}}
\put(0,0){\line(0,1){15}}
\put(0,30){\vector(0,1){15}}
\newcount\compteur \compteur=400
\multiput(0,30)(1,-1){15}{\color[wave]{\the\compteur}\global\advance\compteur by 2\line(1,-1){1}} 
\multiput(15,15)(0,1){20}{\line(0,1){0.1}}
\multiput(15,35)(1,-1){5}{\color[wave]{\the\compteur}\global\advance\compteur by 2\line(1,-1){1}}
\multiput(20,30)(0,1){10}{\line(0,1){0.1}}
\multiput(20,40)(1,-1){5}{\color[wave]{\the\compteur}\global\advance\compteur by 2\line(1,-1){1}} 
\multiput(25,35)(0,1){10}{\line(0,1){0.1}}
\multiput(25,45)(1,-1){10}{\color[wave]{\the\compteur}\global\advance\compteur by 2\line(1,-1){1}} 
\multiput(35,35)(1,-1){2}{\color[wave]{\the\compteur}\global\advance\compteur by 2\line(1,-1){1}}
\multiput(37,33)(0,1){5}{\line(0,1){0.1}}
\multiput(37,38)(1,-1){4}{\color[wave]{\the\compteur}\global\advance\compteur by 2\line(1,-1){1}}
\multiput(41,34)(0,1){10}{\line(0,1){0.1}}
\multiput(41,44)(1,-1){10}{\color[wave]{\the\compteur}\global\advance\compteur by 2\line(1,-1){1}}
\multiput(51,34)(1,-1){1}{\color[wave]{\the\compteur}\global\advance\compteur by 2\line(1,-1){1}}
\multiput(52,33)(1,-1){3}{\color[wave]{\the\compteur}\global\advance\compteur by 2\line(1,-1){1}}
\multiput(55,30)(1,-1){10}{\color[wave]{\the\compteur}\global\advance\compteur by 2\line(1,-1){1}}
\multiput(65,20)(0,1){10}{\line(0,1){0.1}}
\multiput(65,30)(1,-1){2}{\color[wave]{\the\compteur}\global\advance\compteur by 2\line(1,-1){1}}
\multiput(67,28)(0,1){15}{\line(0,1){0.1}}
\multiput(67,43)(1,-1){5}{\color[wave]{\the\compteur}\global\advance\compteur by 2\line(1,-1){1}}
\multiput(72,38)(0,1){6}{\line(0,1){0.1}}
\multiput(72,44)(1,-1){6}{\color[wave]{\the\compteur}\global\advance\compteur by 2\line(1,-1){1}}
\multiput(78,38)(1,-1){10}{\color[wave]{\the\compteur}\global\advance\compteur by 2\line(1,-1){1}}
\multiput(88,28)(1,-1){8}{\color[wave]{\the\compteur}\global\advance\compteur by 2\line(1,-1){1}}
\multiput(96,20)(1,-1){5}{\color[wave]{\the\compteur}\global\advance\compteur by 2\line(1,-1){1}}
\multiput(101,15)(1,-1){10}{\color[wave]{\the\compteur}\global\advance\compteur by 2\line(1,-1){1}}
\multiput(111,5)(0,1){15}{\line(0,1){0.1}}
\multiput(111,20)(1,-1){3}{\color[wave]{\the\compteur}\global\advance\compteur by 2\line(1,-1){1}}
\multiput(114,17)(0,1){20}{\line(0,1){0.1}}
\multiput(114,37)(1,-1){4}{\color[wave]{\the\compteur}\global\advance\compteur by 2\line(1,-1){1}}
\multiput(118,33)(0,1){10}{\line(0,1){0.1}}
\multiput(118,43)(1,-1){10}{\color[wave]{\the\compteur}\global\advance\compteur by 2\line(1,-1){1}}
\multiput(128,33)(1,-1){16}{\color[wave]{\the\compteur}\global\advance\compteur by 2\line(1,-1){1}}
\multiput(144,17)(1,-1){10}{\color[wave]{\the\compteur}\global\advance\compteur by 2\line(1,-1){1}}
\multiput(154,7)(0,1){10}{\line(0,1){0.1}}
\multiput(154,17)(1,-1){7}{\color[wave]{\the\compteur}\global\advance\compteur by 2\line(1,-1){1}}
\multiput(161,10)(0,1){4}{\line(0,1){0.1}}
\multiput(161,14)(1,-1){4}{\color[wave]{\the\compteur}\global\advance\compteur by 2\line(1,-1){1}}

\multiput(165,10)(1,-1){3}{\color[wave]{\the\compteur}\global\advance\compteur by 2\line(1,-1){1}}
\multiput(168,7)(1,-1){2}{\color[wave]{\the\compteur}\global\advance\compteur by 2\line(1,-1){1}}
\multiput(170,5)(1,-1){5}{\color[wave]{\the\compteur}\global\advance\compteur by 2\line(1,-1){1}}

\end{picture}
\caption{One-to-one correspondence between the tree and the graph of the contour represented by corresponding colours.}
\label{fig:contour}
\end{figure}

In the case where $\lambda(\mathbb{T})$ is infinite, one has to consider the truncations of the tree above fixed levels in order to have well-defined contours (see \cite{L10} for more details). In \cite{L10}, Lambert shows that the contour process $\left(Y^{(t)}_{s},\ s\in\mathbb{R}_{+} \right)$ of a splitting tree which has been pruned from every part above $t$ (called truncated tree above $t$), has the law of a spectrally positive L\'evy process started at the lifespan $V$ of the root, reflected below $t$ and killed at $0$, with Laplace exponent $\psi$ given by
\begin{equation}
\label{eq:laplace}
\psi(x)=x-\int_{(0,\infty]}\left(1-e^{-rx}\right)\Lambda(dr), \ \ x\in\mathbb{R}_{+}.
\end{equation}
In particular, the Laplace transform of $\mathbb{P}_{V}$ can be expressed in terms of $\psi$,
\begin{equation}
\label{eq:laplacepv}
\int_{\mathbb{R}_{+}}e^{-\lambda v}\mathbb{P}_{V}(dv)=1+\frac{\psi(\lambda)-\lambda}{b}.
\end{equation}
The largest root of $\psi$, denoted $\alpha$, characterizes the way the population expend. In this paper, we only investigate the behavior of the population in the supercritical case, when $\alpha>0$. In particular, using the convexity of $\psi$ (see \cite{Kyp}), this is equivalent to $\psi'(0+)<0$. Now, since
\begin{equation}
\label{eq:laplaceDerivative}
\psi'(x)=1-\int_{\mathbb{R}_{+}}xe^{-xv}\ b\mathbb{P}_{V}(dv),\quad \forall x\in\mathbb{R}_{+},
\end{equation}
one can see that the condition $\psi'(0+)<0$ is also equivalent to have $b\mathbb{E}\left[V \right]>1$ which is a more usual supercritical condition.
In the supercritical case, the population grows exponentially fast on the survival event with rate $\alpha$. 
According to \eqref{eq:laplacepv}, one can also see that
\begin{equation}
\label{eq:intalpha}
\int_{\mathbb{R}_{+}}e^{-\alpha v}\mathbb{P}_{V}(dv)=1-\frac{\alpha}{b}.
\end{equation}

As said earlier, an important feature of the contour process is that the number of alive individuals at time $t$ equals \[
\text{Card}\{Y_{s}^{(t)}=t \mid s\in\mathbb{R}_{+} \}.
\]
This set is the number of times the contour process hits $t$. This allows getting, thanks to the theory of L\'evy processes, the law of the unidimensional marginals of the process $\left(N_{t}, \ t\in\mathbb{R}_{+} \right)$. Indeed, let $\tau_{t}$ (resp. $\tau_{0}$) be the hitting time of $t$ (resp. of $0$) by the contour process $Y^{(t)}$. Now, for any positive integer $k$, the strong Markov property entails that
\[
\mathbb{P}\left(N_{t}=k\mid N_{t}>0 \right)=\mathbb{E}\left\{\mathbb{P}_{t\wedge V}\left(\sharp\{Y_{s}^{(t)}=t \mid s\geq0 \}=k \mid \tau_{t}<\tau_{0} \right)\right\}=\mathbb{P}_{t}\left( \sharp\{Y_{s}^{(t)}=t \mid s>0 \}=k-1 \right).
\]
Once again, the strong Markov property gives
\begin{eqnarray*}
\mathbb{P}_{t}\left( \sharp\{Y_{s}^{(t)}=t \mid s>0 \}=k-1 \right)&=&\mathbb{P}_{t}\left(\tau_{t}<\tau_{0} \right)\mathbb{P}_{t}\left(\sharp\{Y_{s}^{(t)}=t \mid s>0 \}=k-2 \right)\\&=&\mathbb{P}_{t}\left(\tau_{t}<\tau_{0} \right)^{k-1}\mathbb{P}_{t}\left(\sharp\{Y_{s}^{(t)}=t \mid s>0 \}=0 \right).
\end{eqnarray*}
Now, using fluctuation identities for spectrally positive L\'evy processes (see Theorem 8.1 in \cite{Kyp} for the spectrally negative case), we have that
\[
\mathbb{P}_{t}\left(\tau_{t}<\tau_{0} \right)=1-\frac{1}{W(t)},
\]
where $W$ is the scale function of the L\'evy process whose Laplace exponent is given by \eqref{eq:laplace}. The function $W$ is the unique increasing function whose Laplace transform is given by
\begin{equation}
\label{eq:laplacescale}
T_{\mathcal{L}}W(t)=\int_{(0,\infty)}e^{-rt}W(r)dr=\frac{1}{\psi(t)}, \quad t>\alpha,
\end{equation}
where $\alpha$ is the largest root of $\psi$.

 From the discussion above, we see that $N_{t}$ is a geometric random variable conditionally on $\left\{N_{t}>0\right\}$. More precisely, for a positive integer $k$,
 \begin{equation}
 \label{eq:loiNt}
 \mathbb{P}\left(N_{t}=k\mid N_{t}>0\right)=\frac{1}{W(t)}\left(1-\frac{1}{W(t)}\right)^{k-1}.
 \end{equation}
 In particular,
  \begin{equation}
  \label{eq:momNt}
  \mathbb{E}\left[N_{t}\mid N_{t}>0\right]=W(t).
  \end{equation}
 Moreover, it can be showed (see \cite{Rich}), that
 \begin{equation}
 \label{eq:NtNoCond}
 \mathbb{E}N_{t}=W(t)-W\star\mathbb{P}_{V}(t),
 \end{equation}
 and
 \begin{equation}
 \label{eq:probaNoCond}
 \mathbb{P}\left(N_{t}>0 \right)=1-\frac{W\star\mathbb{P}_{V}(t)}{W(t)},
 \end{equation}
 where
 \[
 W\star\mathbb{P}_{V}(t):=\int_{[0,t]}W(t-s)\mathbb{P}_{V}(ds).
 \]

 For the rest of this paper, unless otherwise stated, the notation $\mathbb{P}_{t}$ refers to $\mathbb{P}\left(.\mid N_{t}>0 \right)$ whereas $\mathbb{P}_{\infty}$ refers to the probability measure conditioned on the non-extinction event (which has positive probability in the supercritical case).

 Finally, we recall the asymptotic behaviour of the scale function $W(t)$ which is widely used in the sequel,
 \begin{lem}(\cite[Thm. 3.21]{CL1})
 \label{lem: asyComp}
 There exist a positive constant $\gamma$ such that,
 \[
 e^{-\alpha t}\psi'(\alpha)W(t)-1=\mathcal{O}\left(e^{-\gamma t} \right).
 \]

 \end{lem}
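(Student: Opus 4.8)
The plan is to extract the asymptotics of $W$ directly from its Laplace transform $1/\psi$ by recasting \eqref{eq:laplacescale} as a renewal problem. First I would rewrite $\psi$ using \eqref{eq:laplacepv}: setting $\hat V(\lambda):=\int_{\mathbb R_+}e^{-\lambda v}\mathbb P_V(dv)$, one has $\psi(\lambda)=\lambda-b\big(1-\hat V(\lambda)\big)=\lambda\big(1-\hat G(\lambda)\big)$, where $\hat G(\lambda):=b\,\frac{1-\hat V(\lambda)}{\lambda}$ is the Laplace transform of the measure $G(ds):=b\,\mathbb P(V>s)\,ds$ on $\mathbb R_+$. For $\lambda>\alpha$ we have $\psi(\lambda)>0$, hence $\hat G(\lambda)<1$, and \eqref{eq:laplacescale} becomes the geometric series $T_{\mathcal L}W(\lambda)=\frac1\lambda\sum_{n\ge0}\hat G(\lambda)^n$. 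Inverting term by term identifies $W$ with a renewal function: $W(t)=\sum_{n\ge0}G^{\star n}([0,t])=U([0,t])$ with $U:=\sum_{n\ge0}G^{\star n}$.

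Since $G$ has mass $b\,\mathbb E[V]>1$ (the supercritical condition), $U$ is not a finite renewal measure, so the second step is the Malthusian tilt. Put $G_\alpha(ds):=e^{-\alpha s}G(ds)=b\,e^{-\alpha s}\mathbb P(V>s)\,ds$; by \eqref{eq:intalpha} its total mass is $b\,\frac{1-\hat V(\alpha)}{\alpha}=1$, so $G_\alpha$ is a probability measure with mean $\mu_\alpha:=\int_0^\infty s\,G_\alpha(ds)$. Differentiating $\psi(\lambda)=\lambda(1-\hat G(\lambda))$ at $\alpha$ and using $\hat G(\alpha)=1$ gives $\psi'(\alpha)=-\alpha\,\hat G'(\alpha)=\alpha\mu_\alpha$, which already pins down the limiting constant. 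Because tilting is multiplicative under convolution, the tilted renewal measure $U_\alpha:=\sum_{n\ge0}G_\alpha^{\star n}$ satisfies $U(ds)=e^{\alpha s}U_\alpha(ds)$, whence
\[
W(t)=\int_{[0,t]}e^{\alpha s}\,U_\alpha(ds).
\]

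The heart of the proof is the rate at which $U_\alpha$ approaches its stationary form. Here the model is favorable: $G_\alpha$ is absolutely continuous with bounded density $s\mapsto b\,e^{-\alpha s}\mathbb P(V>s)$, so it is spread out, and $\int_0^\infty e^{\beta s}G_\alpha(ds)=b\int_0^\infty e^{(\beta-\alpha)s}\mathbb P(V>s)\,ds<\infty$ for every $\beta<\alpha$, so Cram\'er's condition holds. The renewal theorem with geometric rate (Stone's decomposition) then provides a renewal density $u_\alpha$ with $u_\alpha(s)=\frac1{\mu_\alpha}+\mathcal O(e^{-\delta s})$ for some $\delta>0$. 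Since the atom $U_\alpha(\{0\})=1$ only contributes a term of order $\mathcal O(1)$, substituting into the displayed formula yields
\[
e^{-\alpha t}W(t)=e^{-\alpha t}\int_0^t\frac{e^{\alpha s}}{\mu_\alpha}\,ds+e^{-\alpha t}\int_0^t e^{\alpha s}\Big(u_\alpha(s)-\frac1{\mu_\alpha}\Big)\,ds+\mathcal O(e^{-\alpha t})=\frac1{\alpha\mu_\alpha}+\mathcal O(e^{-\gamma t}),
\]
with $\gamma:=\min(\alpha,\delta)>0$, since the second integrand is $\mathcal O(e^{(\alpha-\delta)s})$. Multiplying by $\psi'(\alpha)=\alpha\mu_\alpha$ gives the claim.

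The main obstacle is precisely the passage from the ordinary key renewal theorem, which only gives $u_\alpha(s)\to1/\mu_\alpha$, to the geometric error term $\mathcal O(e^{-\delta s})$; this is where spread-outness and Cram\'er's condition are essential, and where the value of $\delta$ is determined. Equivalently --- and this is the route I would use to identify $\delta$ concretely --- one may invert $1/\psi$ by the Bromwich integral and shift the contour from $\{\mathrm{Re}\,\lambda=c\}$, $c>\alpha$, to $\{\mathrm{Re}\,\lambda=\alpha-\gamma\}$, collecting the residue $e^{\alpha t}/\psi'(\alpha)$ at the simple pole $\alpha$ and bounding the shifted integral by $C\,e^{(\alpha-\gamma)t}$. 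The delicate point in that formulation is to exhibit a spectral gap, i.e.\ to show $\psi$ has no zero other than $\alpha$ with real part in $[\alpha-\gamma,\alpha]$: for non-lattice $\mathbb P_V$ this follows from $\int_{\mathbb R_+}e^{-\alpha v}\big(1-\cos(sv)\big)\mathbb P_V(dv)>0$ for $s\neq0$, while the behaviour $|\psi(\lambda)|\sim|\lambda|$ as $|\lambda|\to\infty$ controls far-away zeros and justifies the contour shift.
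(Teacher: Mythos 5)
Your proof is correct, and it takes a genuinely different route from the paper's. In fact the paper never proves Lemma \ref{lem: asyComp} itself: it is imported from \cite{CL1}. The closest internal analogue is Proposition \ref{lem:WComp}, which is sharper and whose proof is the natural point of comparison. There, the renewal structure of $W$ is extracted probabilistically: one passes to the L\'evy process $Y^{\sharp}$ conditioned on extinction, uses the Wiener--Hopf factorization to identify $e^{-\alpha t}W(t)$ with the potential measure $U[0,t]$ of the ascending ladder height subordinator $H^{\sharp}$, and obtains a defective renewal equation driven by $\Upsilon(dr)=b\,\mathbb{E}\left[e^{-\alpha V}\mathds{1}_{V>r}\right]dr$, which is tilted and handled by the plain key renewal theorem (Theorem \ref{thm:nonLatticeRen} via Remark \ref{rem:1}). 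You instead read the renewal structure directly off the Laplace transform \eqref{eq:laplacescale}: writing $\psi(\lambda)=\lambda\left(1-\hat G(\lambda)\right)$ exhibits $W$ as the renewal function of $G(ds)=b\,\mathbb{P}(V>s)\,ds$, and the Malthusian tilt $G_{\alpha}$ is a probability measure by \eqref{eq:intalpha}, with $\psi'(\alpha)=\alpha\mu_{\alpha}$. (Note that your $G_{\alpha}$ is \emph{not} the paper's $\Upsilon$: tilting does not commute with taking integrated tails, so the two renewal representations of $e^{-\alpha t}W(t)$ are genuinely different, though both valid.) The trade-off lies in the final renewal-theoretic input. Your route needs the exponential-rate renewal theorem (Stone's decomposition under spread-outness and Cram\'er's condition --- both automatic here, as you observe, since $G_{\alpha}$ has a density bounded by $be^{-\alpha s}$); this is a heavier black box than the key renewal theorem, and it leaves $\delta$, hence $\gamma=\min(\alpha,\delta)$, unidentified. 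The paper's route uses only the ordinary key renewal theorem yet obtains more: the exact limit $e^{\alpha t}F(t)\to 1/(b\mathbb{E}V-1)$, which gives Lemma \ref{lem: asyComp} with the explicit rate $\gamma=\alpha$ \emph{and} identifies the constant $\mu$ that is actually needed later (in Lemma \ref{lem:quadeR} and Corollary \ref{cor:withCond}). For the statement of Lemma \ref{lem: asyComp} alone, your weaker, more elementary conclusion suffices, and it avoids all L\'evy fluctuation theory.

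One minor inaccuracy, confined to your alternative contour-shift sketch: the spectral gap does not require $\mathbb{P}_{V}$ to be non-lattice. Since $G_{\alpha}$ is absolutely continuous, $\left|\int_{\mathbb{R}_{+}}e^{-isu}G_{\alpha}(du)\right|<1$ for all $s\neq0$, so $\alpha$ is the only zero of $\psi$ on the line $\{\mathrm{Re}\,\lambda=\alpha\}$ whatever $\mathbb{P}_{V}$ is; for lattice $\mathbb{P}_{V}$ your real-part criterion degenerates, but the imaginary-part equation still excludes zeros. This matters because the model allows arbitrary lifespan laws, including lattice ones and $\mathbb{P}_{V}=\delta_{\infty}$. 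Your primary (renewal-theoretic) argument is unaffected, since it only uses spread-outness of $G_{\alpha}$, never non-latticeness of $\mathbb{P}_{V}$.
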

 From this Lemma and \eqref{eq:probaNoCond}, one can easily deduce that
 \begin{equation}
 \label{eq:Prnonex} 
 \mathbb{P}\left(\text{NonEx} \right)=\lim\limits_{t\to\infty}\mathbb{P}\left(N_{t}>0 \right)=\frac{\alpha}{b},
 \end{equation}
 where $\text{NonEx}$ refer to the non-extinction event.

To end this section, let us recall the law of large number for $N_{t}$.

 \begin{thm}
 \label{thm:ASCVNt}
 There exists a random variable $\mathcal{E}$, such that
 \[
e^{-\alpha t}N_{t}\underset{t\to\infty}{\rightarrow}\frac{\mathcal{E}}{\psi'(\alpha)},\quad a.s. \text{ and in } L^{2}.
 \]
 Moreover, under $\mathbb{P}_{\infty}$, $\mathcal{E}$ is exponentially distributed with parameter one.
 \end{thm}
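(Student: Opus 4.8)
The plan is to prove the three convergences in turn: first read off the limiting law from the explicit distribution \eqref{eq:loiNt}, then obtain $L^2$ convergence from the moment formulae and the scale--function asymptotics, and finally upgrade to almost sure convergence through the branching structure of the tree.

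\emph{Identifying the limit.} Conditionally on $\{N_t>0\}$, \eqref{eq:loiNt} says that $N_t$ is geometric with success probability $1/W(t)$, so $N_t/W(t)$ is a geometric variable divided by its mean; since $W(t)\to\infty$, a one-line Laplace transform computation gives that, under $\mathbb P_t$, $N_t/W(t)$ converges in law to an exponential variable $\mathcal E$ of parameter one. By Lemma \ref{lem: asyComp} one has $e^{-\alpha t}W(t)\to 1/\psi'(\alpha)$, and by \eqref{eq:Prnonex} conditioning on $\{N_t>0\}$ passes in the limit to conditioning on non-extinction; hence $e^{-\alpha t}N_t\to\mathcal E/\psi'(\alpha)$ in distribution, with $\mathcal E$ exponential of parameter one under $\mathbb P_\infty$ and $\mathcal E=0$ on the extinction event. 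This fixes both the constant and the law of the limit.

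\emph{$L^2$ convergence.} Write $X_t:=e^{-\alpha t}N_t$. From the conditional geometric law and \eqref{eq:momNt} one gets $\mathbb E[N_t^2\mid N_t>0]=2W(t)^2-W(t)$, so that, using $\mathbb P(N_t>0)\to\alpha/b$ and Lemma \ref{lem: asyComp},
\[
\mathbb E\big[X_t^2\big]\xrightarrow[t\to\infty]{}\frac{2\alpha}{b\,\psi'(\alpha)^2},
\]
and in particular $(X_t)$ is bounded in $L^2$. To conclude I would show $(X_t)$ is Cauchy in $L^2$, i.e. that $e^{-\alpha(s+t)}\mathbb E[N_sN_t]$ tends to the same constant. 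Here I use the branching decomposition
\[
N_t=\indic{t<V}+\sum_{k\,:\,t_k\le t\wedge V}N^{(k)}_{t-t_k},
\]
where $V$ is the lifetime of the root, the $t_k$ are its birth times (a Poisson process of rate $b$ on $[0,V]$) and the $N^{(k)}$ are independent copies of $N$. Taking conditional expectations and inserting the first moment formula \eqref{eq:NtNoCond} reduces the cross moment to renewal--type integrals whose large--time asymptotics follow from Lemma \ref{lem: asyComp} and \eqref{eq:intalpha}; the limit is precisely $2\alpha/(b\psi'(\alpha)^2)$. Hence $\mathbb E[(X_t-X_s)^2]\to0$ and $X_t$ converges in $L^2$ to a variable, necessarily of the law found above, that I denote $\mathcal E/\psi'(\alpha)$.

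\emph{Almost sure convergence, and the main obstacle.} The genuine difficulty is that $X_t$ is not a martingale: an individual alive at time $s$ is not distributed like a fresh root, because conditioning on its survival size--biases its residual lifetime, so the future of the population depends on the age structure at time $s$. The clean way around this is the intrinsic martingale $M_t=e^{-\alpha t}\sum_{i\in\mathcal N_t}\phi(a_i(t))$, where $\mathcal N_t$ is the set of individuals alive at $t$, $a_i(t)$ their ages, and $\phi(a)=\lim_{u\to\infty}e^{-\alpha u}\mathbb E[\widetilde N^{a}_u]$ is the reproductive value of an individual of age $a$ (with $\widetilde N^{a}$ the population of a subtree whose root already has age $a$); this $\phi$ encodes exactly the size--biasing. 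The bounds of the previous step make $M_t$ an $L^2$--bounded martingale, hence almost surely and $L^2$ convergent, and a stable--age--distribution argument shows that the unweighted count $e^{-\alpha t}N_t$ converges almost surely to a deterministic multiple of the martingale limit; comparing expectations via \eqref{eq:NtNoCond} identifies this multiple and shows the limit equals the $L^2$ limit $\mathcal E/\psi'(\alpha)$ of the second step. Alternatively one can stay closer to the tree: iterating the decomposition above expresses $X_t$ as a finite weighted sum $\sum_k e^{-\alpha t_k}X^{(k)}_{t-t_k}$ of independent copies plus a vanishing term, and a Borel--Cantelli argument along a lattice $t_n=n\delta$ -- the summability being supplied by the exponential rate $\gamma$ of Lemma \ref{lem: asyComp} -- combined with an oscillation estimate for $t\mapsto X_t$ between consecutive lattice points promotes the $L^2$ convergence to almost sure convergence. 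In both approaches it is the fluctuation identity behind Lemma \ref{lem: asyComp} that makes the renewal asymptotics explicit and furnishes the rate needed to close the argument.
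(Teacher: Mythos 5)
The first thing to say is that the paper never proves Theorem \ref{thm:ASCVNt}: it is recalled from the literature, the a.s.\ convergence being attributed to Nerman's theory of branching processes counted by random characteristics (\cite{N}, \cite{Rich}) and to an elementary fluctuation-analysis proof in \cite{CH}. So your attempt can only be measured against those cited proofs and against the way this paper uses the same identities elsewhere. Your first two steps are correct and are very much in the paper's own spirit: reading the exponential limit law off the geometric distribution \eqref{eq:loiNt} together with Lemma \ref{lem: asyComp} and \eqref{eq:Prnonex} is the natural argument, and your cross-moment computation via the decomposition of $N_t$ over the birth times of the root is exactly the mechanism the paper formalizes in Lemma \ref{lem:stocInt} and executes in Lemma \ref{lem:joinMom} (where the renewal equation is solved by Laplace transforms; the limit $2\alpha/(b\psi'(\alpha)^2)$ you announce agrees with \eqref{eq:joinMom}). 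One caveat: for the Cauchy criterion you need $e^{-\alpha(s+t)}\mathbb{E}[N_sN_t]$ to converge as $s,t\to\infty$ jointly, not along iterated limits; the renewal asymptotics do furnish the required uniformity, but this has to be argued, and note that the paper itself goes the other way around (it \emph{uses} the $L^2$ convergence of Theorem \ref{thm:ASCVNt} to justify passing $s\to\infty$ in Lemma \ref{lem:joinMom}), so you cannot borrow that step.

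The genuine weak point is the a.s.\ convergence, which is where all the difficulty of the theorem lives, and neither of your two routes actually delivers it. In route (a) you name the right object, the intrinsic martingale weighted by reproductive values, but then dispose of the decisive step in one clause: ``a stable-age-distribution argument shows that the unweighted count converges a.s.\ to a deterministic multiple of the martingale limit.'' That step is not routine bookkeeping; it is precisely the content of Nerman's theorem, so as written your proof of the a.s.\ part is a citation of \cite{N}/\cite{Rich} --- legitimate, and indeed the same thing the paper does, but then it is not a proof, and the conditions of that theorem would still need to be checked for this model. In route (b) there is a concrete unproven estimate: since individuals die, $t\mapsto N_t$ is not monotone, so the ``oscillation estimate between consecutive lattice points'' is not free; the standard repair is to dominate the oscillation of $N$ on $[n\delta,(n+1)\delta]$ by the monotone process counting all births together with the population size at time $n\delta$, and that argument (plus the summability coming from the rate $\gamma$ of Lemma \ref{lem: asyComp}) must actually be written out. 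In summary: correct identification of the limit law, correct and paper-consistent $L^2$ mechanism, but the almost sure convergence is either outsourced to the very references the paper cites or left resting on its hardest, unproven estimate.
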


 \subsection{A bit of renewal theory}
 \label{ssec:renew}
  The purpose of this part is to recall some facts on renewal equations borrowed from \cite{Fel}.
  Let $h:\mathbb{R}\to\mathbb{R}$ be a function bounded on finite intervals with support in $\mathbb{R}_{+}$ and $\Gamma$ a probability measure on $\mathbb{R}_{+}$.
  The equation
  \[
  F(t)=\int_{\mathbb{R}_{+}}F(t-s)\Gamma(ds)+h(t),
  \]
  called a renewal equation, is known to admit a unique solution finite on bounded interval.
 
 Here, our interest is focused on the asymptotic behavior of $F$. We said that the function $h$ is DRI (directly Riemann integrable) if for any $\delta>0$, the quantities
 \[
 \delta\sum_{i=0}^{n}\sup_{t\in[\delta i,\delta(i+1))}f(t)
 \]
 and
 \[
 \delta\sum_{i=0}^{n}\inf_{t\in[\delta i,\delta(i+1))}f(t)
 \]
 converge as $n$ goes to infinity respectively to some real numbers $I_{sup}^{\delta}$ and $I_{inf}^{\delta}$,
 and
 \[
 \lim\limits_{\delta\to 0}I_{sup}^{\delta}=\lim\limits_{\delta\to 0}I_{inf}^{\delta}<\infty.
 \]
In the sequel, we use the two following criteria for the DRI property:
 \begin{lem}
 \label{lem:DRI}
 Let $h$ a function as defined previously. If $h$ satisfies one of the next two conditions, then $h$ is DRI:
 \begin{itemize}
 \item[1.] $h$ is non-negative decreasing and classically Riemann integrable on $\mathbb{R}_{+}$,
 \item[2.]  $h$ is c\`adl\`ag and bounded by a DRI function.
 \end{itemize}
 \end{lem}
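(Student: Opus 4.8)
The plan is to treat the two sufficient conditions separately, in each case by analysing the upper and lower block sums
\[
\overline{S}_{\delta}:=\delta\sum_{i\ge 0}\sup_{t\in[\delta i,\delta(i+1))}h(t),\qquad \underline{S}_{\delta}:=\delta\sum_{i\ge 0}\inf_{t\in[\delta i,\delta(i+1))}h(t)
\]
(these are the quantities $I^{\delta}_{sup}$ and $I^{\delta}_{inf}$ of the definition) and showing that they are finite, that their difference tends to $0$ as $\delta\to 0$, and that they converge to a common finite limit. A general and repeatedly used remark is that, as soon as the relevant series converge absolutely, comparing each block to the corresponding piece of the integral gives the sandwich $\underline{S}_{\delta}\le\int_{0}^{\infty}h(t)\,dt\le\overline{S}_{\delta}$, because $\delta\inf_{[\delta i,\delta(i+1))}h\le\int_{\delta i}^{\delta(i+1)}h\le\delta\sup_{[\delta i,\delta(i+1))}h$.

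For the first condition I would exploit monotonicity directly. If $h$ is non-negative and non-increasing, then on every block the supremum is reached at the left endpoint, $\sup_{[\delta i,\delta(i+1))}h=h(\delta i)$, while the infimum (the left limit at the right endpoint) satisfies $h(\delta(i+1))\le\inf_{[\delta i,\delta(i+1))}h\le h(\delta i)$. Summing these bounds and using classical Riemann integrability (finiteness of $\int_{0}^{\infty}h$) yields
\[
\delta\sum_{j\ge 1}h(\delta j)\le\underline{S}_{\delta}\le\overline{S}_{\delta}=\delta h(0)+\delta\sum_{j\ge 1}h(\delta j),
\]
so that $\overline{S}_{\delta}$ is finite and $0\le\overline{S}_{\delta}-\underline{S}_{\delta}\le\delta h(0)$. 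Combined with the sandwich above, letting $\delta\to 0$ forces both sums to the common finite value $\int_{0}^{\infty}h$, which is exactly the DRI property.

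For the second condition I would first record two soft facts: a c\`adl\`ag function has at most countably many discontinuities, hence is continuous Lebesgue-almost everywhere, and therefore (being bounded on finite intervals) is Riemann integrable on every compact $[0,T]$; moreover the domination $|h|\le g$ by a DRI function $g$ makes $\overline{S}_{\delta}$ and $\underline{S}_{\delta}$ finite, since in absolute value each is bounded by the corresponding sum for $g$. The object to drive to zero is the oscillation sum $\overline{S}_{\delta}-\underline{S}_{\delta}=\delta\sum_{i\ge 0}\mathrm{osc}_{i}(h)$, where $\mathrm{osc}_{i}(h)=\sup_{[\delta i,\delta(i+1))}h-\inf_{[\delta i,\delta(i+1))}h$. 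I would split this sum at a threshold $T$. For the indices with $\delta i<T$, Riemann integrability of $h$ on $[0,T]$ is precisely the statement that this finite block of oscillations tends to $0$ as $\delta\to 0$; for the indices with $\delta i\ge T$, each oscillation is at most $2\sup_{[\delta i,\delta(i+1))}g$, so this tail is dominated by $2\big(\overline{S}_{\delta}(g)-\delta\sum_{\delta i<T}\sup_{[\delta i,\delta(i+1))}g\big)$, which, since $g$ is DRI, converges as $\delta\to 0$ to $2\int_{T}^{\infty}g$.

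Letting first $\delta\to 0$ and then $T\to\infty$ gives $\overline{S}_{\delta}-\underline{S}_{\delta}\to 0$; the sandwich $\underline{S}_{\delta}\le\int_{0}^{\infty}h\le\overline{S}_{\delta}$ (valid here by absolute integrability, as $|h|\le g$ with $g$ integrable) then forces both sums to the finite limit $\int_{0}^{\infty}h$, proving $h$ is DRI. I expect the tail estimate to be the main obstacle: one must exchange the $\delta\to 0$ and $T\to\infty$ limits correctly, and it is exactly here that the DRI hypothesis on the dominating function $g$ — rather than mere integrability — is indispensable, since it is what guarantees that the tail of the upper block sum of $g$ is uniformly small as the mesh shrinks.
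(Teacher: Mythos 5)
Your proof is correct. Note that the paper does not actually prove this lemma: it is stated as a known fact borrowed from Feller's book \cite{Fel}, so there is no internal proof to compare against; your argument is essentially the standard textbook one. For condition 1, monotonicity pins the block suprema at the left endpoints and the integral sandwich does the rest; for condition 2, splitting the oscillation sum $\overline{S}_{\delta}-\underline{S}_{\delta}$ at a threshold $T$, controlling the finite part by Riemann integrability of the c\`adl\`ag function on compacts (via a.e.\ continuity and Lebesgue's criterion) and the tail by the DRI majorant, with $\limsup_{\delta\to 0}$ taken before $T\to\infty$, is exactly the classical argument. One hairline point to tighten: the block $[\delta i,\delta(i+1))$ that straddles $T$ is not controlled by Riemann integrability on $[0,T]$ alone, since it sticks out past $T$; but it is a single term bounded by $2\delta\sup_{[\delta i,\delta(i+1))}g$, so it can be absorbed into the tail estimate (or one can invoke Riemann integrability on $[0,T+1]$ instead). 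This does not affect the validity of the proof.
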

 We can now state the next result, which is constantly used in the sequel.
 \begin{thm}
 \label{thm:nonLatticeRen}
  Suppose that $\Gamma$ is non-lattice, and $h$ is DRI, then
 \[
 \lim\limits_{t\to\infty}F(t)=\gamma\int_{\mathbb{R}_{+}}h(s)ds,
 \]
 with \[
 \gamma:=\left(\int_{\mathbb{R}_{+}}s\ \Gamma(ds)\right)^{-1},
 \]
 if the above integral is finite, and zero otherwise.

 \end{thm}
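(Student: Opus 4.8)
The plan is to solve the renewal equation explicitly through the renewal measure and then transfer the asymptotics to $F$ using the direct Riemann integrability of $h$. First I would introduce the renewal measure
\[
U := \sum_{n=0}^{\infty} \Gamma^{\star n}, \qquad \Gamma^{\star 0} := \delta_{0},
\]
where $\Gamma^{\star n}$ denotes the $n$-fold convolution of $\Gamma$. Iterating the renewal equation $F = h + \Gamma\star F$ yields, for every $N$,
\[
F = \sum_{n=0}^{N} \Gamma^{\star n}\star h + \Gamma^{\star (N+1)}\star F .
\]
Since $\Gamma$ is a probability measure with $\Gamma(\{0\})<1$, the mass $\Gamma^{\star(N+1)}([0,t])$ tends to $0$ as $N\to\infty$ for each fixed $t$; as $F$ and $h$ are bounded on finite intervals, the remainder term vanishes and one identifies the unique solution as
\[
F(t) = \int_{[0,t]} h(t-s)\, U(ds) = (h\star U)(t).
\]
This simultaneously reproves existence and uniqueness.

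The analytic core is Blackwell's renewal theorem: in the non-lattice case, for every fixed $a>0$,
\[
U\bigl((t, t+a]\bigr) \longrightarrow a\gamma \quad \text{as } t\to\infty,
\]
where $\gamma = (\int_{\RR_{+}} s\, \Gamma(ds))^{-1}$, read as $0$ when the mean is infinite. I would take this as the deep input of the argument; it may be proved by coupling two delayed renewal processes (one started from the stationary age distribution) or by Fourier-analytic means, and in the present setting it is borrowed, like the rest of the renewal machinery, from \cite{Fel}.

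The remaining step is the passage from $U$ to $h\star U$, and this is exactly where the hypothesis that $h$ is DRI is used. For an interval indicator $h = \Un_{(a,b]}$ one has $(h\star U)(t) = U\bigl((t-b, t-a]\bigr)$, which converges to $(b-a)\gamma = \gamma\int h$ by Blackwell's theorem; by linearity the conclusion $\lim_{t}(h\star U)(t) = \gamma\int h$ holds for every finite nonnegative step function supported on $\RR_{+}$. For a general DRI function $h$ I would sandwich it between the lower and upper step functions $\underline{h}_{\delta}$ and $\overline{h}_{\delta}$ built from the infima and suprema of $h$ on the successive mesh intervals $[\delta i, \delta(i+1))$. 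Applying the step-function case and letting first $t\to\infty$ and then $\delta\to 0$ squeezes both $\liminf_{t} F(t)$ and $\limsup_{t} F(t)$ between $\gamma I_{inf}^{\delta}$ and $\gamma I_{sup}^{\delta}$, whose common limit as $\delta\to 0$ is $\gamma\int_{\RR_{+}} h(s)\,ds$ by the very definition of direct Riemann integrability.

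I expect the main obstacle to lie in this last sandwiching step, specifically in controlling the infinitely many pieces of the step functions uniformly in $t$: one must guarantee that the tail of the mesh contributes negligibly, uniformly over large $t$, before exchanging the limit in $t$ with the summation over $i$. This uniform tail control is precisely what direct Riemann integrability provides beyond ordinary Lebesgue or improper Riemann integrability, since it forces $\delta\sum_i \sup_{[\delta i,\delta(i+1))} h$ to be finite; combined with the uniform boundedness of $U$ on intervals of unit length (itself a consequence of Blackwell's theorem), it legitimizes the interchange and closes the argument.
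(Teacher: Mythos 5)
Your proof is correct, but note that the paper does not actually prove this theorem: it is stated as recalled background from \cite{Fel}, so there is no internal proof to compare against. What you have written is the standard derivation of the key renewal theorem: identify the unique solution as $F = h\star U$ with $U=\sum_{n\geq0}\Gamma^{\star n}$, take Blackwell's renewal theorem ($U((t,t+a])\to a\gamma$, read as $0$ when the mean is infinite) as the deep input, verify the claim for step functions subordinate to the mesh $[\delta i,\delta(i+1))$, and sandwich a general DRI $h$ between $\underline{h}_{\delta}$ and $\overline{h}_{\delta}$. You also correctly isolate the two facts that make the sandwich legitimate: finiteness of $\delta\sum_{i}\sup_{[\delta i,\delta(i+1))}h$ (this is exactly what DRI buys beyond ordinary integrability) and the uniform bound $\sup_{t}U((t,t+\delta])<\infty$ (which follows from $U((t,t+\delta])\leq U([0,\delta])$ by the renewal property, or from Blackwell's theorem plus local finiteness). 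Two caveats, neither fatal. First, your argument outsources Blackwell's theorem to \cite{Fel}, so it is no more self-contained than the paper's bare citation; its added value is that it makes explicit where the DRI hypothesis enters, which is the hypothesis the paper actually verifies in applications (via Lemma \ref{lem:DRI} and Remark \ref{rem:1}). Second, minor bookkeeping: for $h=\mathds{1}_{(a,b]}$ one has $(h\star U)(t)=U([t-b,t-a))$ rather than $U((t-b,t-a])$; in the non-lattice case this is immaterial, since all interval types share the same Blackwell limit, and non-latticeness also guarantees $\Gamma(\{0\})<1$, which is what you need both for $\Gamma^{\star(N+1)}([0,t])\to0$ in the uniqueness step and for the local finiteness of $U$.
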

 \begin{rem}
 \label{rem:1}
In particular, if we suppose that $\Gamma$ is a measure with mass lower than $1$, and that there exists a constant $\alpha\geq0$ such that
  \[
  \int_{\mathbb{R}_{+}}e^{\alpha t}\Gamma(dt)=1,
  \]
  then, one can perform the change a measure
  \[
  \widetilde{\Gamma}(dt)=e^{\alpha t}\Gamma(dt),
  \]
  in order to apply Theorem  \ref{thm:nonLatticeRen} to a new renewal equation to obtain the asymptotic behavior of $F$. (See \cite{Fel} for details).
  This method is also used in the sequel.
  \end{rem}
 \subsection{A lemma on the expectation of a random integral with respect to a Poisson random measure}
 \begin{lem}
 \label{lem:stocInt}
 Let $\xi$ be a Poisson random measure on $\mathbb{R}_{+}$ with intensity $\theta\lambda(da)$ where $\theta$ is a positive real number and $\lambda$ the Lebesgue measure. Let also $\left(X^{(i)}_{u},\ u\in\mathbb{R}_{+}\right)_{i\geq 1}$ be an i.i.d. sequence of non-negative c\`adl\`ag random processes independent of $\xi$. Let also $Y$ be a random variable independent of $\xi$ and from the family $\left(X^{(i)}_{u},\ u\in\mathbb{R}_{+}\right)_{i\geq 1}$. If $\xi_{u}$ denotes $\xi\left([0,u] \right)$, then, for any $t\geq0$,
 \[
 \mathbb{E}\int_{[0,t]}X^{(\xi_{u})}_{u}\mathds{1}_{Y>u}\ \xi(du)=\int_{0}^{t}\mathbb{P}\left(Y>u \right)\theta\mathbb{E}X_{u}  du,
 \]
 where $\left(X_{u},\ u\in\mathbb{R}_{+}\right)=\left(X^{(1)}_{u},\ u\in\mathbb{R}_{+}\right)$. In addition, for any $t\leq s$, we have
 \begin{multline*}
 \mathbb{E}\left[\int_{[0,t]} X_{v}^{(\xi_{v})}\mathds{1}_{Y>v}\ \xi(dv)\int_{[0,s]} X_{u}^{(\xi_{u})} \mathds{1}_{Y>u}\ \xi(du)\right]=\int_{0}^{t}\theta \mathbb{E}\left[X_{u}^{2} \right] \mathbb{P}\left(Y>u \right)\ du\\
 +\int_{0}^{t}\int_{0}^{s}\theta^{2}\mathbb{E}X_{u}\mathbb{E}X_{v}\mathbb{P}\left(Y>u,Y>v \right)\ dudv.
 \end{multline*}
 \end{lem}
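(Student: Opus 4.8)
The plan is to interpret both integrals as discrete sums over the atoms of $\xi$ and to exploit the fact that the indexing by $\xi_u$ assigns to each atom a fresh, independent copy of the process. Writing the atoms of $\xi$ as $0<T_1<T_2<\cdots$, one has $\xi_{T_i}=\xi([0,T_i])=i$, hence $X^{(\xi_{T_i})}_{T_i}=X^{(i)}_{T_i}$, and therefore
\[
\int_{[0,t]} X^{(\xi_u)}_u \mathds{1}_{Y>u}\, \xi(du) = \sum_{i\geq 1} X^{(i)}_{T_i}\, \mathds{1}_{Y>T_i}\, \mathds{1}_{T_i\leq t}.
\]
Since every quantity in sight is non-negative, Tonelli's theorem legitimates all the interchanges of expectation, summation and integration below, with no integrability hypothesis required; the only measurability point to check is that each $X^{(i)}_{T_i}$ is a genuine random variable, which follows from the \cadlag\ (hence jointly measurable) character of the processes.

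For the first identity I would condition on the pair $(\xi,Y)$. As the family $(X^{(i)})_{i\geq1}$ is i.i.d.\ and independent of $(\xi,Y)$, while each $T_i$ is $\xi$-measurable, we get $\mathbb{E}[X^{(i)}_{T_i}\mid \xi,Y]=g(T_i)$, where $g(u):=\mathbb{E}[X_u]$. The conditional expectation of the sum is thus $\sum_i g(T_i)\mathds{1}_{Y>T_i}\mathds{1}_{T_i\leq t}=\int_{[0,t]} g(u)\mathds{1}_{Y>u}\,\xi(du)$. Conditioning now on $Y$ alone and applying Campbell's formula for $\xi$ (whose mean measure is $\theta\lambda$) to the deterministic integrand $u\mapsto g(u)\mathds{1}_{Y>u}\mathds{1}_{u\leq t}$ gives $\int_0^t \theta\, g(u)\mathds{1}_{Y>u}\,du$, and taking the expectation over $Y$ yields $\int_0^t \theta\, \mathbb{E}[X_u]\,\mathbb{P}(Y>u)\,du$, as claimed.

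For the second identity I would expand the product as the double sum $\sum_{i,j} X^{(i)}_{T_i}X^{(j)}_{T_j}\mathds{1}_{Y>T_i}\mathds{1}_{Y>T_j}\mathds{1}_{T_i\leq t}\mathds{1}_{T_j\leq s}$ and split it according to $i=j$ and $i\neq j$. On the diagonal, conditioning on $(\xi,Y)$ replaces $(X^{(i)}_{T_i})^2$ by $g_2(T_i)$, where $g_2(u):=\mathbb{E}[X_u^2]$, and the first-order Campbell argument above (using $t\leq s$, so that the constraint reduces to $T_i\leq t$) produces $\int_0^t \theta\,\mathbb{E}[X_u^2]\,\mathbb{P}(Y>u)\,du$, the first term. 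Off the diagonal the distinct copies $X^{(i)}$ and $X^{(j)}$ are independent, so conditioning on $(\xi,Y)$ replaces $X^{(i)}_{T_i}X^{(j)}_{T_j}$ by $g(T_i)g(T_j)$. The remaining expectation of $\sum_{i\neq j} g(T_i)g(T_j)\mathds{1}_{Y>T_i}\mathds{1}_{Y>T_j}\mathds{1}_{T_i\leq t}\mathds{1}_{T_j\leq s}$ is then evaluated through the second factorial moment measure of a Poisson random measure, which equals the tensor square $\theta^2\,\lambda\otimes\lambda$ of its intensity; conditioning on $Y$ this gives $\theta^2\int_0^t\int_0^s g(u)g(v)\mathds{1}_{Y>u}\mathds{1}_{Y>v}\,du\,dv$, and averaging over $Y$ produces the second term.

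The one genuinely delicate point is the off-diagonal contribution: it requires the second-order Campbell (Mecke) identity, namely that the order-two factorial moment measure of a Poisson process factorizes as the product of its intensity, and it relies crucially on having first removed the diagonal $i=j$ so that the independence of the distinct copies $X^{(i)}$ and $X^{(j)}$ may legitimately be used. Everything else is routine conditioning combined with Tonelli, which applies unconditionally here because all the integrands are non-negative.
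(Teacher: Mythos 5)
Your proof is correct and follows essentially the same route as the paper's: both arguments reduce the random integrand to deterministic means by integrating out the i.i.d.\ copies (you via conditioning on $(\xi,Y)$ after enumerating the atoms $T_{i}$ and noting $\xi_{T_{i}}=i$, the paper via a product-space Fubini--Tonelli argument), both exploit that distinct atoms carry distinct and hence independent copies of $X$, and both conclude with the first and second moment measures of the Poisson process, which the paper derives by hand through simple-function approximation where you invoke Campbell's formula and the factorial moment (Mecke) identity. The diagonal/off-diagonal split you emphasize is exactly the paper's dichotomy $\mathbb{E}X_{u}\mathbb{E}X_{v}$ versus $\mathbb{E}X_{u}^{2}$ according to whether the two atoms coincide, so the two proofs differ only in packaging.
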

 \begin{proof}
 Since the proof the two formulas lies on the same ideas, we only give the proof of the second equation.
 
 First of all, let $f:\mathbb{R}_{+}^{2}\to\mathbb{R}_{+}$ be a positive measurable deterministic function. We recall that, for a Poisson random measure, the measures of two disjoint measurable sets are independent random variables. That is, for $A,B$ in the Borel $\sigma$-field of $\mathbb{R}_{+}$, $\xi(A\cap B^{c})$ is independent from $\xi(B)$, which leads to
 \[
 \mathbb{E}\left[\xi(A)\xi(B)\right]=\mathbb{E}\xi(A)\mathbb{E}\xi(B)+\text{Var}\xi(A\cap B).
 \]
 Using the approximation of $f$ by an increasing sequence of simple function, as in the construction of Lebesgue's integral, it follows from the Fubini-Tonelli theorem and the monotone convergence theorem, that
 \[
 \mathbb{E}\int_{[0,t]\times [0,s]}f(u,v)\ \xi(du)\xi(dv)=\int_{0}^{t}\theta f(u,u) \ du\\
 +\int_{0}^{t}\int_{0}^{s}\theta^{2}f(u,v)\ dudv.
 \]
 
 Since the desired relation only depends on the law of our random objects, we can assume without loss of generality that $\xi$ is defined on a probability space $\left(\Omega,\mathcal{F}, \mathbb{P} \right)$ and the family $\left(X^{(i)}_{s},\ s\in\mathbb{R}_{+}\right)_{i\geq 1}$ is defined on an other probability space $\left(\tilde{\Omega},\tilde{\mathcal{F}}, \tilde{\mathbb{P}} \right)$.
 Then, using a slight abuse of notation, we define $\xi$ on $\Omega\times\tilde{\Omega}$ by $\xi_{(\omega,\tilde{\omega})}=\xi_{\omega}$, and similarly for the family $X$.
 
 Then, by Fubini-Tonneli Theorem, with the notation $\xi_{\omega}^{v}=\xi_{\omega}\left([0,v] \right)$,
 \begin{multline*}
 \mathbb{E}\left[\int_{[0,t]\times[0,s]} X_{v}^{(\xi_{v})}X_{u}^{(\xi_{u})} \ \xi(du)\xi(dv)\right]=\int_{\Omega\times\tilde{\Omega}}\int_{[0,t]\times[0,s]} X_{v}^{(\xi_{\omega}^{v})}(\tilde{\omega})X_{u}^{(\xi_{\omega}^{u})}(\tilde{\omega}) \ \xi_{\omega}(du) \xi_{\omega}(dv)\ \mathbb{P}\otimes\tilde{\mathbb{P}}\left(d\omega,d\tilde{\omega}\right)\\
 =\int_{\Omega}\int_{[0,t]\times[0,s]}\left[\int_{\tilde{\Omega}} X_{v}^{(\xi_{\omega}^{v})}(\tilde{\omega})X_{u}^{(\xi_{\omega}^{u})}(\tilde{\omega})\tilde{\mathbb{P}}\left(d\tilde{\omega}\right)\right]  \ \xi_{\omega}(du) \xi_{\omega}(dv)\ \mathbb{P}(d\omega).
 \end{multline*}
 But since the $X^{(i)}$ are identically distributed and $\xi$ is a simple measure (purely atomic with mass one for each atom) we deduce that, if $u$ and $v$ are two atoms of $\xi_{\omega}$, $\xi_{\omega}^{v}=\xi_{\omega}^{u}$ if and only if $u=v$, which implies that
 \[
 \int_{\tilde{\Omega}} X_{v}^{(\xi_{\omega}^{v})}(\tilde{\omega})X_{u}^{(\xi_{\omega}^{u})}(\tilde{\omega})\tilde{\mathbb{P}}\left(d\tilde{\omega}\right)=
 \left\{
 \begin{array}{lr}
 \mathbb{E}X_{u}\mathbb{E}X_{v}, & u\neq v, \\ 
 \mathbb{E}X_{u}^{2}, & u=v,
 \end{array}
 \right. \quad \xi_{\omega}-a.e.
 \]
 The result follows readily, and the case with the indicator function of $Y$ is left to the reader.
 \end{proof}
\section{Statement of the theorem}
\label{sec:results}
 
The a.s.\ convergence stated in Section \ref{sec:models} suggests to study the second order properties of this convergence to get central limit theorems.
We recall that the Laplace distribution with zero mean and variance $\sigma^{2}$ is the probability distribution whose characteristic function is given by
\[
\lambda\in\mathbb{R}\mapsto\frac{1}{1+\frac{1}{2}\sigma^{2}\lambda^{2}}.
\]
It particular, it has a density given by
\[
x\in\mathbb{R}\mapsto\frac{1}{2\sigma}e^{-\frac{|x|}{\sigma}}.
\]

We denote this law by $\mathcal{L}\left(0,\sigma^{2} \right)$. We also recall that, if $G$ is a Gaussian random variable with zero mean and variance $\sigma^{2}$ and $\mathcal{E}$ is an exponential random variable with parameter $1$ independent of $G$, then $\sqrt{\mathcal{E}}G$ is Laplace $\mathcal{L}\left(0,\sigma^{2} \right)$. 

Before stating the main result of the paper, let us recall the law of large number for $N_{t}$.
 \begin{thm}
 \label{thm:ASCVNt2}
In the supercritical case, that is $b\mathbb{E}\left[V\right]>1$, there exists a random variable $\mathcal{E}$, such that
 \[
e^{-\alpha t}N_{t}\underset{t\to\infty}{\rightarrow}\frac{\mathcal{E}}{\psi'(\alpha)},\quad a.s. \text{ and in } L^{2}.
 \]
In particular, under $\mathbb{P}_{\infty}$, $\mathcal{E}$ is exponentially distributed with parameter one.
 \end{thm}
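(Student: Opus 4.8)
The plan is to separate the identification of the limiting law, which is essentially free, from the upgrade to almost sure and $L^2$ convergence, which is the real content. For the law: conditionally on $\{N_t>0\}$ the variable $N_t$ is geometric with success probability $1/W(t)$ by \eqref{eq:loiNt}, and a geometric variable of mean $W(t)$ divided by $W(t)$ converges in distribution to an exponential of parameter one as $W(t)\to\infty$. Since Lemma \ref{lem: asyComp} gives $e^{-\alpha t}W(t)\to 1/\psi'(\alpha)$, this shows that under $\mathbb{P}_t=\mathbb{P}(\cdot\mid N_t>0)$ the rescaled count $e^{-\alpha t}N_t$ converges in distribution to $\mathcal{E}/\psi'(\alpha)$ with $\mathcal{E}$ exponential of parameter one; combining with $\mathbb{P}(N_t>0)\to\alpha/b$ from \eqref{eq:Prnonex} then fixes the law of $\mathcal{E}$ under $\mathbb{P}_\infty$. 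No distributional statement, however, can by itself yield convergence of the \emph{same} random sequence, which is what must be proved.

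To obtain almost sure convergence I would introduce the intrinsic martingale of the population. Denoting by $A_1(t),\dots,A_{N_t}(t)$ the ages at time $t$ of the individuals alive at time $t$, set
\[
M_t:=e^{-\alpha t}\sum_{i=1}^{N_t} f\bigl(A_i(t)\bigr),
\]
where $f$ solves the first order linear equation $f'(a)=(\alpha+\mu(a))f(a)-b$ with $f(0)=1$ and $\mu$ the hazard rate of $\mathbb{P}_V$. Since a living individual of age $a$ ages at unit speed, dies at rate $\mu(a)$ and gives birth at rate $b$ to an individual of age $0$, this choice of $f$ is precisely the one cancelling the drift of $M_t$, so $(M_t,\mathcal{F}_t)_{t\ge 0}$ is a non-negative martingale and converges almost surely. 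The passage from $M_t$ to the bare count $N_t$ (the constant weight $f\equiv 1$) is then carried out by renewal theory: the empirical age profile of the living population stabilises, and feeding the expected profile into Theorem \ref{thm:nonLatticeRen} after the exponential tilting of Remark \ref{rem:1}, which turns the reproduction measure $b\,e^{-\alpha a}\mathbb{P}(V>a)\,da$ into a probability measure by \eqref{eq:intalpha}, shows that $e^{-\alpha t}N_t$ and $M_t$ have proportional almost sure limits. In particular $e^{-\alpha t}N_t$ converges almost surely, and I would \emph{define} $\mathcal{E}:=\psi'(\alpha)\lim_t e^{-\alpha t}N_t$, its law being the one computed in the first step.

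The $L^2$ statement then comes almost for free from the exact moments available in this model. From the geometric law \eqref{eq:loiNt} one gets $\mathbb{E}[N_t^2\mid N_t>0]=2W(t)^2-W(t)$, hence, using Lemma \ref{lem: asyComp} and \eqref{eq:Prnonex},
\[
e^{-2\alpha t}\mathbb{E}\bigl[N_t^2\bigr]\xrightarrow[t\to\infty]{}\frac{2\alpha}{b\,\psi'(\alpha)^2},
\]
which is precisely $\mathbb{E}\bigl[(\mathcal{E}/\psi'(\alpha))^2\bigr]$, since $\mathcal{E}$ vanishes on extinction and is exponential of parameter one on non-extinction (of probability $\alpha/b$). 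Having already established almost sure convergence of $e^{-\alpha t}N_t$ together with convergence of its second moment to the second moment of the limit, the Vitali convergence theorem (uniform integrability of the squares, via Scheff\'e's lemma) upgrades the convergence to $L^2$. The first moment serves as a consistency check: \eqref{eq:NtNoCond} and Lemma \ref{lem: asyComp} give $e^{-\alpha t}\mathbb{E}N_t\to\alpha/(b\psi'(\alpha))=\mathbb{E}[\mathcal{E}/\psi'(\alpha)]$.

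The main obstacle is the transfer step of the second paragraph: the martingale yields almost sure convergence of the reproductive-value-weighted count immediately, but converting this into convergence of the unweighted $N_t$ requires controlling the fluctuations of the empirical age distribution, that is, a genuinely dynamical renewal argument rather than the one-dimensional marginal computations that suffice for the moments (the joint moments one would need if one instead argued by $L^2$-Cauchyness are obtainable from the Poissonian birth structure of the tree through Lemma \ref{lem:stocInt}). Once that step is in place, the remarkable feature of the splitting-tree model---that the conditional law of $N_t$ is \emph{exactly} geometric, so that all moments are controlled explicitly through the scale function $W$---makes the $L^2$ statement follow with no extra integrability hypothesis on $\mathbb{P}_V$, in contrast with the usual $x\log x$-type conditions required for general Crump--Mode--Jagers processes.
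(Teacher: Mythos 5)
A point of comparison first: the paper does not prove this theorem at all --- it is recalled as known, with the convergence attributed to \cite{Rich} (Jagers--Nerman theory of general branching processes counted by random characteristics) and to \cite{CH} (elementary fluctuation analysis of the contour process). Your proposal is therefore best read as a reconstruction of the first of these routes, and several of its components are correct and cleanly executed: the identification of the limit law from the exact geometric law \eqref{eq:loiNt} together with Lemma \ref{lem: asyComp} and \eqref{eq:Prnonex}; the drift computation leading to the reproductive value $f$ solving $f'(a)=(\alpha+\mu(a))f(a)-b$, $f(0)=1$ (indeed $f(a)=b\int_0^\infty e^{-\alpha s}\,\mathbb{P}\left(V>a+s\mid V>a\right)ds$, which is non-negative and bounded by $b/\alpha$ thanks to \eqref{eq:intalpha}, so $M_t$ is a bona fide non-negative martingale); and the upgrade to $L^2$ via Scheff\'e/Vitali, which is valid as stated once a.s.\ convergence and the second-moment identity $e^{-2\alpha t}\mathbb{E}\left[N_t^2\right]\to 2\alpha/(b\,\psi'(\alpha)^2)$ are in hand.

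The genuine gap is exactly where you locate it, and it is not removable within the argument as written: the passage from the a.s.\ convergence of the weighted count $M_t=e^{-\alpha t}\sum_{i\le N_t}f(A_i(t))$ to that of the unweighted $e^{-\alpha t}N_t$. Your sketch proposes to ``feed the expected age profile into Theorem \ref{thm:nonLatticeRen} after the tilting of Remark \ref{rem:1}'', but that theorem is a statement about deterministic renewal equations: it yields asymptotics of expectations such as $e^{-\alpha t}\mathbb{E}N_t$, and cannot by itself produce almost sure stabilization of the random empirical age distribution. That stabilization is precisely Nerman's strong law for CMJ processes counted with random characteristics --- i.e., it is the theorem being proved, not a tool available for proving it. To close the gap one must either import Nerman's machinery (the route of \cite{Rich}), or argue through the contour process as in \cite{CH}, or replace the transfer step by an $L^2$ argument: compute $\mathbb{E}\left[N_tN_s\right]$ from the decomposition \eqref{eq:decomposition} and Lemma \ref{lem:stocInt} (as the paper does for $\mathbb{E}\left[N_t\mathcal{E}\right]$ in Lemma \ref{lem:joinMom}), deduce that $e^{-\alpha t}N_t$ is Cauchy in $L^2$, and then recover a.s.\ convergence along a discrete skeleton plus a regularity argument; you mention this alternative parenthetically but do not carry it out either. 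A minor further restriction: your ODE for $f$ presupposes that $\mathbb{P}_V$ admits a hazard rate $\mu$, whereas the paper allows an arbitrary lifetime law (including an atom at $+\infty$); the reproductive value should be defined directly by the integral formula above rather than through $\mu$.
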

 \medskip
 
In this work we prove the following theorem on the second order properties of the above convergence.
\begin{thm}In the supercritical case, we have, under $\mathbb{P}_{\infty}$,
\label{thm:tclN}
 \[
e^{-\frac{\alpha}{2}t}\left(\psi'(\alpha)N_{t}-e^{\alpha t}\mathcal{E}\right)\overset{(d)}{\underset{t\to\infty}{\longrightarrow}}\mathcal{L}\left(0,2-\psi'(\alpha)\right).
\]

\end{thm}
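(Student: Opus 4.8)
The plan is to combine the branching structure of the tree read at a fixed level with sharp second-order moment estimates for the error, and then to recognize the announced Laplace law as an exponential mixture of centred Gaussians, exactly the representation $\sqrt{\mathcal{E}}\,G$ recalled just before the statement.

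\textbf{Decomposition at a fixed level.} Fix $a>0$ and let $\mathcal{F}_a$ denote the information carried by the tree truncated below $a$. At time $a$ there are $N_a$ living individuals, and by the strong Markov property of the contour process each of them roots an independent splitting tree whose founder carries a residual lifetime distributed as the overshoot of the Lévy process with exponent $\psi$ above $a$; these overshoots are i.i.d.\ with a law governed by the Wiener--Hopf factorization. Writing $N^{(i)}$ for the counting process of the $i$-th subtree, one has $N_t=\sum_{i=1}^{N_a}N^{(i)}_{t-a}$ for $t>a$. Applying Theorem \ref{thm:ASCVNt2} to each subtree gives limits $\mathcal{E}^{(i)}:=\lim_{t\to\infty}\psi'(\alpha)e^{-\alpha(t-a)}N^{(i)}_{t-a}$, and the global limit decomposes as $\mathcal{E}=e^{-\alpha a}\sum_{i=1}^{N_a}\mathcal{E}^{(i)}$. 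Setting $D^{(i)}_s:=\psi'(\alpha)N^{(i)}_s-e^{\alpha s}\mathcal{E}^{(i)}$, the two relations combine into the \emph{exact} telescoping
\[
e^{-\frac{\alpha}{2}t}\left(\psi'(\alpha)N_t-e^{\alpha t}\mathcal{E}\right)=e^{-\frac{\alpha}{2}t}\sum_{i=1}^{N_a}D^{(i)}_{t-a},
\]
a conditionally i.i.d.\ sum given $\mathcal{F}_a$.

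\textbf{Moments of the single-tree error.} The analytic heart is to show, for the error $D_s=\psi'(\alpha)N_s-e^{\alpha s}\mathcal{E}$ of a single tree, that $\mathbb{E}[D_s]=o(e^{\alpha s/2})$ and $\mathrm{Var}(D_s)\sim c\,e^{\alpha s}$ with the \emph{universal} constant $c=\psi'(\alpha)(2-\psi'(\alpha))$, independent of the founder's (residual) lifetime. I would obtain these by expressing $\mathbb{E}N_s$, $\mathbb{E}[N_s^2]$ and the covariance $\mathbb{E}[N_s\mathcal{E}]$ (through the two-time correlations $\mathbb{E}[N_sN_u]$, with $\mathcal{E}=\lim_u\psi'(\alpha)e^{-\alpha u}N_u$) in terms of the scale function $W$, using \eqref{eq:NtNoCond}, \eqref{eq:momNt} and Lemma \ref{lem:stocInt} to handle the Poissonian birth contributions, then inserting the resulting convolution/renewal identities into Theorem \ref{thm:nonLatticeRen} after the exponential change of measure of Remark \ref{rem:1}. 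The leading $e^{2\alpha s}$ terms cancel, and isolating the genuine $e^{\alpha s}$-order behaviour, together with the rate $e^{\alpha s/2}$ for the mean, rests on the exponential speed of convergence of $W$ furnished by Lemma \ref{lem: asyComp} and on a ladder-height/Wiener--Hopf analysis of a tilted Lévy process, which is what pins down $c$.

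\textbf{Conditional CLT and the mixture.} Granting these estimates, I would pass through characteristic functions. Conditionally on $\mathcal{F}_a$ the summands are i.i.d., $e^{-\alpha t/2}D^{(i)}_{t-a}$ has asymptotically vanishing mean and variance $\approx e^{-\alpha t}\,c\,e^{\alpha(t-a)}=c\,e^{-\alpha a}$, so a Taylor expansion controlled by the third absolute moment yields
\[
\mathbb{E}_{\infty}\!\left[\exp\!\Big(i\lambda e^{-\frac{\alpha}{2}t}\sum_{i=1}^{N_a}D^{(i)}_{t-a}\Big)\,\Big|\,\mathcal{F}_a\right]\approx\Big(1-\tfrac{\lambda^2}{2}c\,e^{-\alpha a}\Big)^{N_a}\approx\exp\!\Big(-\tfrac{\lambda^2}{2}c\,e^{-\alpha a}N_a\Big).
\]
Letting first $t\to\infty$ and then $a\to\infty$, one has $e^{-\alpha a}N_a\to\mathcal{E}/\psi'(\alpha)$, so the conditional characteristic function converges to $\exp\!\big(-\tfrac{\lambda^2}{2}(2-\psi'(\alpha))\mathcal{E}\big)$; integrating against the $\mathrm{Exp}(1)$ law of $\mathcal{E}$ under $\mathbb{P}_\infty$ gives $\big(1+\tfrac12(2-\psi'(\alpha))\lambda^2\big)^{-1}$, the characteristic function of $\mathcal{L}(0,2-\psi'(\alpha))$. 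Equivalently, the limit is $\sqrt{\mathcal{E}}\,G$ with $G\sim\mathcal{N}(0,2-\psi'(\alpha))$ independent of $\mathcal{E}$. The two delicate points, and the ones I expect to be the main obstacles, are: establishing the exact variance constant $c=\psi'(\alpha)(2-\psi'(\alpha))$ with its independence of the founder's lifetime (where the Lévy-process input---scale function, ladder heights, Wiener--Hopf---is indispensable); and justifying the exchange of the limits $t\to\infty$ and $a\to\infty$, i.e.\ showing that the Taylor remainders in the conditional characteristic function are uniformly negligible, so that the error made in each subtree is small relative to the exponential growth of $N_a$. This last uniform moment control is precisely the technical crux flagged in the strategy.
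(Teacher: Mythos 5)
Your proposal reproduces the skeleton of the paper's own argument: a decomposition of the tree at an intermediate level into subtrees rooted at overshoot-distributed founders, second- and third-moment estimates for the single-tree error obtained through renewal/scale-function/Wiener--Hopf analysis, and a Taylor expansion of conditional characteristic functions controlled by the third moment, with the Laplace law emerging as an exponential mixture of centred Gaussians (equivalently, via the geometric law of the population at the cutting level). However, there is one genuine error in the analytic programme: the variance constant is \emph{not} universal. The paper's Lemma \ref{lem:genConv} shows that for a founder with lifetime $\Xi$ the limit of $e^{-\alpha t}\mathbb{E}\left(\psi'(\alpha)N_{t}(\Xi)-e^{\alpha t}\mathcal{E}(\Xi)\right)^{2}$ equals $\frac{\alpha}{b}\left(2-\psi'(\alpha)\right)\int_{\mathbb{R}_{+}}be^{-\alpha s}\mathbb{P}\left(\Xi>s\right)ds$, which depends on the law of $\Xi$. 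For the standard tree ($\Xi\sim V$) the tail integral equals $1$ by \eqref{eq:intalpha}, giving the constant $\frac{\alpha}{b}\left(2-\psi'(\alpha)\right)$ of Lemma \ref{lem:quadeR}; only for the limiting overshoot law \eqref{eq:loiOversh} does the tail integral converge to $\frac{b\psi'(\alpha)}{\alpha}$, yielding the constant $\psi'(\alpha)\left(2-\psi'(\alpha)\right)$ that your final computation uses. Since $\frac{\alpha}{b}\neq\psi'(\alpha)$ in general (e.g.\ deterministic lifetimes; they coincide only in special cases such as the Markovian ones), your stated plan --- derive the constant once and transport it to the subtrees by universality --- would, if carried out from the $V$-rooted tree, produce the wrong limiting variance. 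The additional step the paper performs (the second half of Lemma \ref{lem:genConv}, computing the limit of the overshoot tail integral from \eqref{eq:loiOversh} and Lemma \ref{lem: asyComp}) is therefore not optional but is exactly where the factor $\psi'(\alpha)$ replacing $\frac{\alpha}{b}$ comes from.

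A second, subtler flaw is the conditioning in your CLT step: under $\mathbb{P}_{\infty}$ the subtrees hanging from level $a$ are \emph{not} conditionally i.i.d.\ given $\mathcal{F}_{a}$, because non-extinction couples them (at least one must survive forever), so the factorization $\mathbb{E}_{\infty}\left[\,\cdot\mid\mathcal{F}_{a}\right]\approx\left(1-\frac{\lambda^{2}}{2}ce^{-\alpha a}\right)^{N_{a}}$ is not justified as written. The paper instead works under $\mathbb{P}_{u}$ with $u=\beta t$, $0<\beta<\frac{1}{2}$ (conditioning only on $N_{u}>0$, under which Lemma \ref{lem:residual} gives genuine independence together with the geometric law of $N_{u}$), and transfers to $\mathbb{P}_{\infty}$ only at the very end via the $L^{1}$ convergence of $\mathds{1}_{N_{\beta t}>0}/\mathbb{P}\left(N_{\beta t}>0\right)$ to $\mathds{1}_{\text{NonEx}}/\mathbb{P}\left(\text{NonEx}\right)$; your iterated-limit scheme can be repaired the same way. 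Two smaller points in the same direction: the residual lifetimes are identically distributed only for $i\geq 2$ (the first one is exceptional, handled in the paper by the separate factor $\tilde{\varphi}/\varphi$), and the a.s.\ decomposition $\mathcal{E}=e^{-\alpha a}\sum_{i=1}^{N_{a}}\mathcal{E}^{(i)}$ is not automatic from Theorem \ref{thm:ASCVNt2} applied subtree by subtree --- it is the content of Lemma \ref{lem:dec}, proved through the embedded Yule process $N^{\infty}$ of individuals with infinite descent.
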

The proof of this theorem is the subject of Section \ref{proof:th1}. Note that, according to \eqref{eq:laplaceDerivative}, we have
\[
2-\psi'(\alpha)=1+\int_{\mathbb{R}_{+}}ve^{-\alpha v }\ b\mathbb{P}_{V}(dv)>0.
\]
\section{Strategy of proof}
\label{sec:strategy}
Let $\left(G_{n}\right)_{n\geq 1}$ be a sequence of geometric random variables with respective parameter $\frac{1}{n}$, and $\left(X_{i}\right)_{i\geq 1}$ a $L^{2}$ family of i.i.d. random variables with zero mean independent of $\left(G_{n}\right)_{n\geq1}$.
It is easy to show that the characteristic function of
\begin{equation}
\label{eq:ConvGeom}
Z_{n}:=\frac{1}{\sqrt{n}}\sum_{i=1}^{G_{n}}X_{i},
\end{equation}
is given by
\begin{equation}
\label{eq:foncCara}
\mathbb{E}e^{i\lambda Z_{n}}=\frac{1+o_{n}(1)}{1+\lambda^{2}\mathbb{E}X_{1}^{2}+o_{n}(1)},
\end{equation}
from which we deduce that $Z_{n}$ converges in distribution to $\mathcal{L}(0,\mathbb{E}X_{1}^{2})$. 

If we suppose that the population counting process $N$ is a Yule Markov process, it clearly follows from the branching property that, for $s<t$,
\begin{equation}
\label{eq:Decomp}
N_{t}=\sum_{i=1}^{N_{s}}N^{i}_{t-s},
\end{equation}
where the family $\left(N^{i}_{t-s}\right)_{i\geq 1}$ is an i.i.d.\ sequence of random variables distributed as $N_{t-s}$ and independent of $N_{s}$. Moreover, since $N_{s}$ is geometrically distributed with parameter $e^{-\alpha s}$, taking the renormalized limit leads to,
\[
\lim\limits_{t\to\infty}e^{-\alpha t}N_{t}=:\mathcal{E}=e^{-\alpha s}\sum_{i=1}^{N_{s}}\mathcal{E}_{i},
\]
where $\mathcal{E}_{1},\dots,\mathcal{E}_{N_{s}}$ is an i.i.d.\ family of exponential random variables with parameter one, and independent of $N_{s}$. Hence,
\[
N_{t}-e^{\alpha t}\mathcal{E}=\sum_{i=1}^{N_{s}}\left(N^{i}_{t-s}-e^{\alpha (t-s)}\mathcal{E}_{i} \right),
\] is a geometric sum of centered i.i.d.\ random variables. This remark and \eqref{eq:ConvGeom} suggest the desired CLT in the Yule case.
\begin{rem}
\label{rem:indep}
Let $N$ be a integer valued random variable. In the sequel we say that a random vector with random size $\left(X_{i}\right)_{1\leq i\leq N}$ form an i.i.d.\ family of random variables independent of $N$, if and only if
\[
\left(X_{1},\dots,X_{N}\right)\overset{d}{=}\left(\tilde{X_{1}},\dots,\tilde{X}_{N}\right),
\] 
where $\left(\tilde{X}_{i}\right)_{i\geq 1}$ is a sequence of i.i.d.\ random variables distributed as $X_{1}$ independent of $N$.
\end{rem}
 However, in the general case, we need to overcome some important difficulties. First of all, equation \eqref{eq:Decomp} is wrong in general. Nevertheless, a much weaker version of \eqref{eq:Decomp} can be obtained in the general case. To make this clear, if $u<t$ are two positive real numbers, then the number of alive individuals at time $t$ is the sum of the contributions of each subtrees $\mathbb{T}\left(O_{i}\right)$ induced by each alive individuals at time $u$ (see Figure \ref{fig:subtrees}). Provided there are individuals alive at time $u$, we denote by $\left(O_{i}\right)_{1\leq i\leq N_{u}}$ the residual lifetimes (see Figure \ref{fig:subtrees}) of the alive individuals at time $u$ indexed using that the $i$th individual is the $i$th individual visited by the contour process. Hence,
 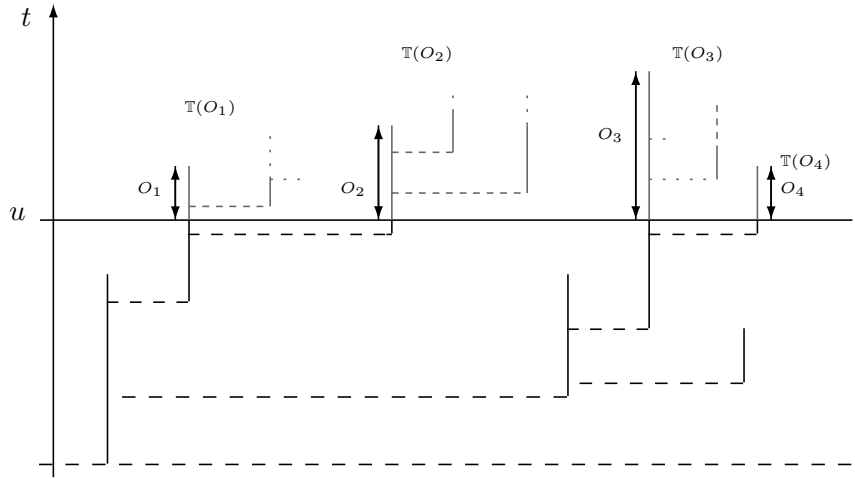
\begin{figure}[ht]
 \unitlength 2mm 
 \linethickness{0.4pt}
 
 \unitlength 1.8mm 
 \linethickness{0.6pt}
 \ifx\plotpoint\undefined\newsavebox{\plotpoint}\fi 
 \begin{picture}(68,40)(-10,0)
 \put(11,24){\makebox{ $u$}}
 \put(14,24){\line(1,0){60}}
 \put(15,5){\vector(0,1){35}}
 \put(13.93,5.93){\line(1,0){.9836}}
 \put(15.897,5.93){\line(1,0){.9836}}
 \put(17.864,5.93){\line(1,0){.9836}}
 \put(19.831,5.93){\line(1,0){.9836}}
 \put(21.799,5.93){\line(1,0){.9836}}
 \put(23.766,5.93){\line(1,0){.9836}}
 \put(25.733,5.93){\line(1,0){.9836}}
 \put(27.7,5.93){\line(1,0){.9836}}
 
 \put(29.667,5.93){\line(1,0){.9836}}
 \put(31.635,5.93){\line(1,0){.9836}}
 \put(33.602,5.93){\line(1,0){.9836}}
 \put(35.569,5.93){\line(1,0){.9836}}
 \put(37.536,5.93){\line(1,0){.9836}}
 \put(39.503,5.93){\line(1,0){.9836}}
 \put(41.471,5.93){\line(1,0){.9836}}
 \put(43.438,5.93){\line(1,0){.9836}}
 \put(45.405,5.93){\line(1,0){.9836}}
 \put(47.372,5.93){\line(1,0){.9836}}
 
 \put(49.34,5.93){\line(1,0){.9836}}
 \put(51.307,5.93){\line(1,0){.9836}}
 \put(53.274,5.93){\line(1,0){.9836}}
 \put(55.241,5.93){\line(1,0){.9836}}
 \put(57.208,5.93){\line(1,0){.9836}}
 \put(59.176,5.93){\line(1,0){.9836}}
 \put(61.143,5.93){\line(1,0){.9836}}
 \put(63.11,5.93){\line(1,0){.9836}}
 \put(65.077,5.93){\line(1,0){.9836}}
 \put(67.044,5.93){\line(1,0){.9836}}
 \put(69.012,5.93){\line(1,0){.9836}}
 \put(70.979,5.93){\line(1,0){.9836}}
 \put(72.946,5.93){\line(1,0){.9836}}
 
 \put(19,20){\line(0,-1){14}}
 \color{gray1}
 \put(25,28){\line(0,-1){4}}
 \multiput(25,25)(1,0){6}{\line(1,0){0.5}}
 \put(31,25){\line(0,1){2}}
 \color{gray1}
 \multiput(31,27)(0,1){4}{\line(0,1){0.2}}
 \multiput(31,27)(1,0){3}{\line(1,0){0.2}}
 \color{black}
 \put(24,28){\vector(0,-1){4}}
 \put(24,24){\vector(0,1){4}}
 \put(20.5,26){\makebox{ {\tiny $O_{1}$}}}
   \put(24,32){\makebox{ {\tiny $\mathbb{T}(O_{1})$}}}

 \put(25,24){\line(0,-1){6}}
 \color{black}
 \put(40,23){\line(0,1){1}}

 \color{gray1}
 
 \put(40,24){\line(0,1){7}}
 \multiput(40,29)(1,0){5}{\line(1,0){0.5}}
 \put(44.5,29){\line(0,1){3}}
 \multiput(44.5,32)(0,1){2}{\line(0,1){0.2}}
 \multiput(40,26)(1,0){10}{\line(1,0){0.5}}
 \put(50,26){\line(0,1){5}}
 \multiput(50,31)(0,1){3}{\line(0,1){0.2}}
 \color{black}
 \put(39,31){\vector(0,-1){7}}
 \put(39,24){\vector(0,1){7}}
 
  \put(35.5,26){\makebox{ {\tiny $O_{2}$}}}
   \put(40,36){\makebox{ {\tiny $\mathbb{T}(O_{2})$}}}
 \put(53,11){\line(0,1){9}}
 \color{black}
 \put(59,16){\line(0,1){8}}
 \color{gray1}
 \multiput(59,27)(1,0){5}{\line(1,0){0.2}}
 \multiput(59,30)(1,0){2}{\line(1,0){0.2}}
 \put(64,27){\line(0,1){2}}
 \multiput(64,29)(0,1){4}{\line(0,1){0.5}}
 
 \put(59,24){\line(0,1){11}}
 \color{black}
   \put(60,36){\makebox{ {\tiny $\mathbb{T}(O_{3})$}}}
 \put(54.5,30){\makebox{ {\tiny $O_{3}$}}}
 \put(58,35){\vector(0,-1){11}}
 \put(58,24){\vector(0,1){11}}
 \put(24.93,17.93){\line(-1,0){.8571}}
 \put(23.215,17.93){\line(-1,0){.8571}}
 \put(21.501,17.93){\line(-1,0){.8571}}
 \put(19.787,17.93){\line(-1,0){.8571}}
 
 \put(30.93,22.93){\line(-1,0){.8571}}
 \put(29.215,22.93){\line(-1,0){.8571}}
 \put(27.501,22.93){\line(-1,0){.8571}}
 \put(25.787,22.93){\line(-1,0){.8571}}
 \put(32.645,22.93){\line(-1,0){.8571}}
 \put(34.36,22.93){\line(-1,0){.8571}}
 \put(36.075,22.93){\line(-1,0){.8571}}
 \put(37.79,22.93){\line(-1,0){.8571}}
 \put(39.505,22.93){\line(-1,0){.8571}}
 \put(40,22.93){\line(-1,0){.4}}
 \color{black}
 \put(58.93,15.93){\line(-1,0){.8571}}
 \put(57.215,15.93){\line(-1,0){.8571}}
 \put(55.501,15.93){\line(-1,0){.8571}}
 \put(53.787,15.93){\line(-1,0){.8571}}
 \put(52.93,10.93){\line(-1,0){.9962}}
 \put(50.937,10.93){\line(-1,0){.9962}}
 \put(48.945,10.93){\line(-1,0){.9962}}
 \put(46.953,10.93){\line(-1,0){.9962}}
 \put(44.96,10.93){\line(-1,0){.9962}}
 \put(42.968,10.93){\line(-1,0){.9962}}
 \put(40.976,10.93){\line(-1,0){.9962}}
 \put(38.983,10.93){\line(-1,0){.9962}}
 \put(36.991,10.93){\line(-1,0){.9962}}
 \put(34.999,10.93){\line(-1,0){.9962}}
 \put(33.006,10.93){\line(-1,0){.9962}}
 \put(31.014,10.93){\line(-1,0){.9962}}
 \put(29.021,10.93){\line(-1,0){.9962}}
 \put(27.029,10.93){\line(-1,0){.9962}}
 \put(25.037,10.93){\line(-1,0){.9962}}
 \put(23.044,10.93){\line(-1,0){.9962}}
 \put(21.052,10.93){\line(-1,0){.9962}}
 
 \put(67,23){\line(0,1){1}}
 \color{gray1}
 
 \put(67,24){\line(0,1){4}}
 \color{black}
   \put(68,28){\makebox{ {\tiny $\mathbb{T}(O_{4})$}}}
 \put(68,26){\makebox{ {\tiny $O_{4}$}}}
 \put(68,28){\vector(0,-1){4}}
 \put(68,24){\vector(0,1){4}}
 \put(66,12){\line(0,1){4}}
 
 \put(65.93,11.93){\line(-1,0){.9286}}
 \put(64.073,11.93){\line(-1,0){.9286}}
 \put(62.215,11.93){\line(-1,0){.9286}}
 \put(60.358,11.93){\line(-1,0){.9286}}
 \put(58.501,11.93){\line(-1,0){.9286}}
 \put(56.644,11.93){\line(-1,0){.9286}}
 \put(54.787,11.93){\line(-1,0){.9286}}
 \put(66.93,22.93){\line(-1,0){.8889}}
 \put(65.152,22.93){\line(-1,0){.8889}}
 \put(63.374,22.93){\line(-1,0){.8889}}
 \put(61.596,22.93){\line(-1,0){.8889}}
 \put(59.819,22.93){\line(-1,0){.8889}}
 \put(13,39){\makebox(0,0)[cc]{$t$}}
 
 \end{picture}
 \caption{ Residual lifetimes with subtrees associated to living individuals at time $u$.}
 \label{fig:subtrees}
 \end{figure}
 \begin{equation}
 \label{eq:decompNt}
 N_{t}=\sum_{i=1}^{N_{u}}N^{i}_{t-u}\left(O_{i} \right),
 \end{equation}
where $\left(N^{i}_{t-u}\left(O_{i} \right)\right)_{i\leq N_{u}}$ denote the population counting processes of the subtrees $\mathbb{T}(O_{i})$ induced by each individual. The notation refers to the fact that each subtree has the law of a standard splitting tree with the only difference that the lifelength of the root is given by $O_{i}$. More precisly, we define, for all $i\geq 1$ and $o\in\mathbb{R}_{+}$, $N^{i}_{t-u}(o)$ the population counting process of the splitting tree constructed from the same random objects as the $i$th subtree of Figure \ref{fig:subtrees}, where the life duration of the first individual is equal to $o$. Hence, from the independence properties between each individuals, $\left(N^{i}_{t-u}\left(o\right), \ t\geq u,o\geq0\right)_{i\geq 1}$ is a family of independent processes, independent of $\left(O_{i}\right)_{1\leq i\leq N_{u}}$, and $\left(N_{t-u}^{i}(o), t\geq u \right)$ has the law of the population counting process of a splitting tree but where the lifespan of the ancestor is $o$.  Note that the lifespans of the other individuals are still distributed as $V$. From the discussion above, it follows that the family of processes $\left(N^{i}_{t-u}\left(O_{i}\right),\ t\geq u \right)_{1\leq i\leq N_{u}}$ are dependent only through the residual lifetimes $\left(O_{i} \right)_{1\leq i\leq N_{u}}$ and the law of $\left(N_{t}\left(O_{i}\right),\ t\in\mathbb{R}_{+}\right)$ under $\mathbb{P}_{u}$ is the law of standard population counting process of splitting tree where the lifespan of the root is distributed as $O_{i}$ under $\mathbb{P}_{u}$.

 Unfortunately, the computation of \eqref{eq:foncCara} does not apply to \eqref{eq:decompNt}.
This issue is solved by the following lemma, whose proof is very similar to one of Proposition 5.5 of \cite{L10}.
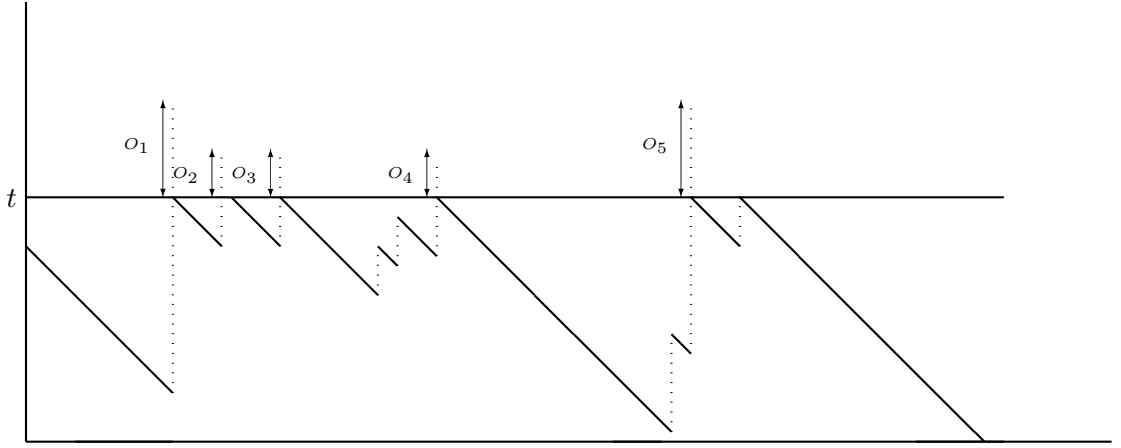
\begin{figure}[ht]
\unitlength  2mm 
\linethickness{0.8pt}
\center{
\unitlength 1.3mm
\begin{picture}(100,60)(0,0)
\put(0,4){
\put(0,24){\makebox{$t$}}
\put(2,0){
\put(0,0){\line(1,0){100}}
\put(0,25){\line(1,0){100}}
\put(0,0){\line(0,1){45}}

%
%
%


%

\put(5,0){\line(1,0){10}}
\put(60,0){\line(1,0){5}}
\put(91,0){\line(1,0){20}}
%
%
%

\put(0,20){\line(1,-1){15}}
\linethickness{0.2pt}
\put(14,25){\vector(0,1){10}}
\put(14,35){\vector(0,-1){10}}
\linethickness{0.8pt}
\put(10,30){\makebox{$\scriptscriptstyle{O_{1}}$}}
\multiput(15,5)(0,1){30}{\line(0,1){0.1}}

\put(15,25){\line(1,-1){5}}
\multiput(20,20)(0,1){10}{\line(0,1){0.1}}
\linethickness{0.2pt}
\put(19,25){\vector(0,1){5}}
\put(19,30){\vector(0,-1){5}}
\linethickness{0.8pt}
\put(15,27){\makebox{$\scriptscriptstyle{O_{2}}$}}
\linethickness{0.8pt}
\color{black}
\put(20,25){\line(1,-1){5}}
\multiput(25,20)(0,1){10}{\line(0,1){0.1}}
\put(25,25){\line(1,-1){10}}
\linethickness{0.2pt}
\put(24,25){\vector(0,1){5}}
\put(24,30){\vector(0,-1){5}}
\linethickness{0.8pt}
\put(20,27){\makebox{$\scriptscriptstyle{O_{3}}$}}

\multiput(35,15)(0,1){5}{\line(0,1){0.1}}
\put(35,20){\line(1,-1){2}}


\multiput(37,18)(0,1){5}{\line(0,1){0.1}}
\put(37,23){\line(1,-1){4}}
\linethickness{0.2pt}
\put(40,25){\vector(0,1){5}}
\put(40,30){\vector(0,-1){5}}
\linethickness{0.8pt}
\put(36,27){\makebox{$\scriptscriptstyle{O_{4}}$}}
\multiput(41,19)(0,1){10}{\line(0,1){0.1}}
\put(41,25){\line(1,-1){10}}
\put(51,15){\line(1,-1){1}}
\put(52,14){\line(1,-1){3}}
\put(55,11){\line(1,-1){10}}
\multiput(65,1)(0,1){10}{\line(0,1){0.1}}
\put(65,11){\line(1,-1){2}}
\linethickness{0.2pt}
\put(66,25){\vector(0,1){10}}
\put(66,35){\vector(0,-1){10}}
\linethickness{0.8pt}
\put(62,30){\makebox{$\scriptscriptstyle{O_{5}}$}}
\multiput(67,10)(0,1){25}{\line(0,1){0.1}}
\put(67,25){\line(1,-1){5}}
\multiput(72,20)(0,1){6}{\line(0,1){0.1}}
\put(72,25){\line(1,-1){6}}
\put(78,19){\line(1,-1){10}}
\put(88,9){\line(1,-1){8}}
\put(96,1){\line(1,-1){1}}
\linethickness{0.2pt}

}}
\end{picture}}
\caption{Reflected JCCP with overshoot over $t$. Independence is provided by the Markov property. }

\end{figure}
\begin{lem}
\label{lem:residual}Let $u$ in $\mathbb{R}_{+}$, we denote by $O_{i}$ for $i$ an integer between $1$ and $N_{u}$ the residual lifetime of the $i$th individuals alive at time $u$. Then under $\mathbb{P}_{u}$, the family $\left(O_{i},\ i\in\llbracket1,N_{u}\rrbracket\right)$ form a family of independent random variables, independent of $N_{u}$, and, expect $O_{1}$, having the same distribution, given by, for $2\leq i\leq N_{t}$,
\begin{equation}
\label{eq:loiOversh}
\mathbb{P}_{u}(O_{i}\in dx)=\int_{\mathbb{R}_{+}}\ \frac{W(u-y)}{W(u)-1}b \mathbb{P}\left(V-y\in dx \right)\ dy.
\end{equation}
Moreover, it follows that the family $\left(N_{s}(O_{i}),s\in\mathbb{R}_{+}\right)_{1\leq i\leq N_{u}}$ is an independent family of process, i.i.d.\ for $i\geq 2$, and independent of $N_{u}$.
\end{lem}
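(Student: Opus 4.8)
The plan is to identify the residual lifetimes $(O_i)$ with the overshoots of the contour process over the level $u$, and then to exploit the strong Markov property of that process at its successive passages across $u$. Throughout I write $Y=Y^{(t)}$ for the contour of the tree truncated above $t$, which by \cite{L10} is the spectrally positive L\'evy process with Laplace exponent $\psi$ of \eqref{eq:laplace}, started at the root lifespan $V$, reflected below $t$ and killed at $0$; recall also that $\mathbb{P}_u(\tau_u<\tau_0)=1-1/W(u)$. First I would make precise the identification ``residual lifetime $=$ overshoot''. Since $Y$ has negative drift and only positive jumps, it can cross $u$ upwards only by a jump, and such a jump is exactly the event where the exploration leaps to the top of the life interval of an individual alive at $u$; the overshoot $Y_\sigma-u$ at that instant is the residual lifetime of that individual. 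Ordering the individuals alive at $u$ by the order in which the contour visits them therefore amounts to ordering the up-crossings of $u$. The first one is special: if $V>u$ the root is itself alive at $u$ and $O_1=V-u$ with no crossing occurring, while if $V<u$ the excursion producing $O_1$ starts from $V$ rather than from $u$; in both cases $O_1$ is governed by the initial height $V$ and hence does not share the law of the subsequent overshoots.

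The core of the argument is an excursion decomposition at level $u$. Each time the contour returns to $u$ from above it does so continuously (slope $-1$), hence it returns \emph{exactly} at level $u$; applying the strong Markov property at these down-crossing times shows that the successive ``blocks'' --- a below-$u$ excursion followed by the above-$u$ part it leads to --- are i.i.d.\ copies of the contour restarted at $u$, reflected below $t$ and killed at $0$. Each such block either re-crosses $u$ (with probability $1-1/W(u)$, recording one further individual and one overshoot) or is absorbed at $0$ (ending the exploration). Consequently $N_u-1$ is geometric, and by the standard independence in geometric compounding the overshoots $O_2,O_3,\dots$ of the crossing blocks are i.i.d.\ and independent of their number $N_u$; this yields at once the mutual independence of the $(O_i)$, their independence from $N_u$, and the equality in law for $i\ge 2$. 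Moreover, applying the strong Markov property at each up-crossing time, where $Y$ starts afresh from $u+O_i$ reflected below $t$, identifies the above-$u$ part of the $i$th block, conditionally on $O_i$, with the contour of a splitting tree whose root has lifespan $O_i$ truncated at $t$; this is exactly $\mathbb{T}(O_i)$ and gives the final assertion on $\left(N_s(O_i)\right)_{1\le i\le N_u}$. Note that the reflection at $t$ only enters the above-$u$ blocks, which is why it affects the subtrees but not the law of the overshoots.

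It remains to identify the common law \eqref{eq:loiOversh}, which is the overshoot of $Y$ at first passage over $u$ for an excursion started at $u$, conditioned to cross $u$ before hitting $0$. Writing $u-y$ for the level just before the crossing jump and using the resolvent density of $Y$ killed on exiting $(0,u)$ --- which for our spectrally positive process is expressed through the scale function $W$ and is proportional to $W(u-y)\,dy$ (Theorem 8.1 in \cite{Kyp} and its spectrally positive dual) --- a jump of size $V$ from $u-y$ produces the overshoot $V-y$ at rate $b\,\mathbb{P}_V$, whence the unnormalised law $W(u-y)\,b\,\mathbb{P}(V-y\in dx)\,dy$; dividing by the crossing probability produces the factor $1/(W(u)-1)$. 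I would verify the normalisation directly from \eqref{eq:laplace} and \eqref{eq:laplacescale}: since the Laplace transform of $b\,\mathbb{P}(V>\cdot)$ equals $b\bigl(1-\int e^{-rx}\mathbb{P}_V(dr)\bigr)/x=1-\psi(x)/x$, the convolution $W\star\bigl(b\,\mathbb{P}(V>\cdot)\bigr)$ has Laplace transform $1/\psi(x)-1/x$, i.e.\ equals $W-1$, so that $\int_0^u W(u-y)\,b\,\mathbb{P}(V>y)\,dy=W(u)-1$ and the total mass of \eqref{eq:loiOversh} is indeed $1$.

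The main obstacle, I expect, is the bookkeeping in the excursion decomposition: one must check that the below-$u$ excursions genuinely renew at $u$ (which hinges on the continuous downward passage landing exactly at $u$) and that this single decoupling delivers simultaneously the independence of the overshoots, their independence from the random count $N_u$, and the conditional independence of the subtrees given the overshoots. By contrast, the explicit computation of the overshoot law is a routine application of spectrally positive fluctuation theory once the normalisation identity above is in hand.
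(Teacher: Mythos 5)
Your proposal is correct and follows essentially the same route as the paper: identify the residual lifetimes with the overshoots of the contour process over the level $u$, get the independence statements and the geometric structure from the (strong) Markov property at the successive passages of $u$, and compute the common law \eqref{eq:loiOversh} via spectrally positive fluctuation theory normalised by the crossing probability $1-1/W(u)$. The only differences are cosmetic: the paper concatenates independent conditioned L\'evy blocks where you decompose the contour at its returns to $u$, and it cites the overshoot--undershoot identity (Theorem 8.7 of \cite{Kyp}) where you re-derive it from the killed resolvent density and the compensation formula, adding a Laplace-transform check of the normalisation $\int_{0}^{u}W(u-y)\,b\,\mathbb{P}(V>y)\,dy=W(u)-1$.
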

\begin{proof}
Let $\left(Y^{(i)}\right)_{0\leq i\leq N_{u}}$ a family of independent L\'evy processes with Laplace exponent
\[
\psi(x)=x-\int_{(0,\infty]}\left(1-e^{-rx}\right)\Lambda(dr), \ \ x\in\mathbb{R}_{+},
\]
conditioned to hit $(u,\infty)$ before hitting $0$, for $i\in\left\{0,\dots,N_{u}-1 \right\}$, and conditioned to hit $0$ first for $i=N_{u}$.
We also assume that,
\[
Y^{(0)}_{0}=u\wedge V,
\]
and
\[
Y^{(i)}_{0}=u,\quad i\in\left\{1,\dots,N_{u} \right\}.
\]
Now, denote by $\tau_{i}$ the exit time of the $i$th process out of $(0,u)$ and
\[
T_{n}=\sum_{i=0}^{n-1}\tau_{i},\quad  n\in\left\{0,\dots,N_{u}+1 \right\}.
\] 
Then, the process defined, for all $s$, by
\[
Y_{s}=\sum_{i=0}^{N_{u}}Y^{(i)}_{s-T_{i}}\mathds{1}_{T_{i}\leq s< T_{i+1}},
\]
has the law of the contour process of a splitting tree cut above $u$. Moreover, the quantity $Y_{\tau_{i}}-Y_{\tau_{i}-}$ is the lifetime of the $i$th alive individual at time $t$.
The family of residual lifetimes $\left(O_{i}\right)_{1\leq i\leq N_{u}}$ has then the same distribution as the sequence of the overshoots of the $Y$ above $u$.
Thus, the Markov property ensures us that $\left(O_{i},\ i\in\llbracket2,N_{u}\rrbracket \right)$ is an i.i.d.\ family of random variables. The Markov property also ensures that $O_{1}$ is independent of the other $O_{i}$'s.

It remains to derive the law of $O_{i}$. Let $Y$ be a L\'evy process with Laplace exponent $\psi$. We denote by $\tau^{+}_{u}$ the time of first passage of $-Y$ above $u$ and $\tau^{-}_{0}$ the time of first passage of $-Y$ below $0$. Then, for all $i\geq2$,

\[
\mathbb{P}_{u}\left(O_{i}\in dx \right)=\mathbb{P}_{0}\left(-Y_{\tau^{-}_{0}}\in dx \mid \tau^{-}_{0}<\tau_{u}^{+} \right).
\]
On the other hand, Theorem 8.7 of \cite{Kyp} gives for any measurable subsets $A\subset [0,u]$, $B\subset(0,-\infty)$,
\[
\mathbb{P}_{0}\left(-Y_{\tau^{-}_{0}}\in B, -Y_{\tau_{0}^{-}-}\in A \right)=\int_{A}\mathbb{P}_{-V}\left(B-y\right)\frac{W(u-y)}{W(u)}dy.
\]
The result follows easily from
\[
\mathbb{P}\left(\tau^{-}_{0}<\tau_{u}^{+}\right)=1-\frac{1}{W(u)}.\qedhere
\]
\end{proof}

\begin{rem} It is important to note that the law of the residual lifetimes of the individuals considered above depends on the particular time $u$ we choose to cut the tree.
That is why, in the sequel, we may denote $O^{(u)}_{i}$ for $O_{i}$ when we want to underline the dependence in time of the law of the residual lifetimes.
\end{rem}
In addition, as suggested by \eqref{eq:foncCara}, we need to compute the expected quadratic error in the convergence of $N_{t}$,
\[
\mathbb{E}\left[\left(\psi'(\alpha)N_{t}-e^{\alpha t}\mathcal{E} \right)^{2}\right],
\]
 which implies to compute
$
\mathbb{E}N_{t}\mathcal{E}.
$

Although, this moment is easy to obtain in the Markovian case, the method does not extend easily to the general case.
One idea  is to characterize it as a solution of a renewal equation in the spirit of the theory of general CMJ processes. 

To make this, we use the renewal structure of a splitting tree: the splitting trees can be constructed (see \cite{L10}) by grafting i.i.d.\ splitting tree on a branch (a tree with a single individual) of length $V_{\emptyset}$ distributed as $V$. Therefore, there exists a family $\left(N^{(i)}_{t},\  t\in\mathbb{R}_{+} \right)_{i\geq 1}$ of i.i.d.\ population counting processes with the same law as $\left(N_{t},\ t\in\mathbb{R}_{+}\right)$, and a Poisson random measure $\xi$ on $\mathbb{R}_{+}$ with intensity $b\, da$ such that
\begin{equation}
\label{eq:decomposition}
N_{t}=\int_{[0,t]}N^{(\xi_{u})}_{t-u}\mathds{1}_{V_{\emptyset}>u}\ \xi(du)+\mathds{1}_{V_{\emptyset}> t}, \quad a.s.,
\end{equation}
where $\xi_{u}=\xi\left([0,u]\right)$.


Another difficulty comes from the fact that unlike \eqref{eq:ConvGeom}, the quantities summed in \eqref{eq:decompNt} are time-dependent, which requires a careful analysis of the asymptotic behaviour of their moments. 

The calculus and the asymptotic analysis of these moments is made in Section \ref{ssec:moment1}:
In Lemma \ref{lem:joinMom}, we compute $\mathbb{E}N_{t}\mathcal{E}$, and then with Lemmas \ref{lem:quadeR} and \ref{lem:NtBound}, we study the asymptotic behaviour of the error of order 2 and 3 respectively.
Section \ref{ssec:Ntarbi} is devoted to the study of the same questions for the population counting processes of the subtrees described in Figure \ref{fig:subtrees} (when the lifetime of the root is not distributed as $V$). Finally, Section \ref{sec:endProof} is devoted to the proof of Theorem \ref{thm:tclN}.

One of the difficulties in studying the behaviour of the moments is to get better estimates on the scale function $W$ than those of Lemma \ref{lem: asyComp}. This is the subject of the next section.
\label{sec:strat}
\section{Precise estimates on $W$ using L\'evy processes}
Before stating and proving the result of this section, we need to recall some facts about L\'evy processes. We follow the presentation of \cite{Kyp}. First, we recall that the law of a spectrally positive L\'evy process $\left(Y_{t},\ t\in\mathbb{R}_{+} \right)$ is uniquely characterized by its Laplace exponent $\psi$,
\[
\psi_{Y}\left(\lambda \right)=\log\mathbb{E}\left[e^{-\lambda Y_{1}} \right], \ \lambda\in\mathbb{R}_{+},
\]
which in our case take the form of \eqref{eq:laplace}:
\[
\psi_{Y}(\lambda)=x-\int_{(0,\infty]}\left(1-e^{-rx}\right)b\mathbb{P}_{V}(dr), \ \ \lambda\in\mathbb{R}_{+}.
\]
In this section, we suppose that $Y_{0}=0$.
For a such L\'evy process, $0$ is irregular for $(0,\infty)$ and in this case the local time at the maximum $\left(L_{t}, \ t\in\mathbb{R}\right)$ can be defined as
\[
L_{t}=\sum_{i=0}^{n_{t}}e^{i}, \quad t\in\mathbb{R}_{+},
\]
where $\left(e^{i}\right)_{i\geq 0}$ is a family of i.i.d.\ exponential random variables with parameter $1$, and 
\[
n_{t}:=\text{Card}\{0<s\leq t \mid Y_{s}=\sup_{u\leq s }Y_{u} \},
\]
is the number of times $Y$ reaches its maximum up to time $t$.
Finally, the ascending ladder process associated to $Y$ is defined as
\[
H_{t}=\sup_{s\leq L^{-1}_{t}}Y_{s},\quad t\in\mathbb{R}_{+},
\]
where $\left(L_{t}^{-1}, t\in\mathbb{R}_{+}\right)$ is the right-inverse of $L$.
It is known that $H$ is a subordinator whose values are the successive new maxima of $Y$.
Conversely, in our case, the process $\left(\inf_{s\leq t}Y_{s},\ t\in\mathbb{R}_{+} \right)$ is a local time at the minimum, denoted $\left(\hat{L}_{t}, \ t\in\mathbb{R}_{+}\right)$. The descending ladder process $\hat{H}$ is then defined from $\hat{L}$ as $H$ was defined from $L$.

We can now state, the celebrated Wiener-Hopf factorization  which allows us to connect the characteristic exponent $\psi_{Y}$ of $Y$ with the characteristic exponents of the bivariate L\'evy processes $\left(\left(L_{t},H_{t}\right),\ t\in\mathbb{R}_{+} \right)$ and $\left(\left(\hat{L}_{t},\hat{H}_{t}\right),\ t\in\mathbb{R}_{+} \right)$, respectively denoted by $\kappa$ and $\hat{\kappa}$. In our particular case, where $Y$ is spectrally negative, we have
\[
\left\{
\begin{array}{lr}
\kappa(\alpha,\beta)=\frac{\alpha-\psi_{Y}(\beta)}{\phi_{Y}(\alpha)-\beta}, & \alpha,\beta\in\mathbb{R}_{+}, \\ 
 \hat{\kappa}(\alpha,\beta)=\phi_{Y}(\alpha)+\beta, & \alpha,\beta\in\mathbb{R}_{+},
\end{array} 
\right.
\]
where $\phi_{Y}$ is the right-inverse of $\psi_{Y}$.
Taking $\alpha=0$ allows us to recover the Laplace exponent $\psi_{H}$ of $H$ from which we obtain the relation,
\begin{equation}
\label{eq:wienerHopf}
\psi_{Y}(\lambda)=\left(\lambda-\phi_{Y}(0) \right)\psi_{H}(\lambda).
\end{equation}
We have now all the notation to state and prove the main result of this section.
\begin{prop}[Behavior of $W$]
\label{lem:WComp}
There exists a positive non-increasing c\`adl\`ag function $F$ such that
\[
W(t)=\frac{e^{\alpha t}}{\psi'(\alpha)}-e^{\alpha t}F(t),\quad t\geq 0,
\]
and
\[
\lim\limits_{t\to \infty}e^{\alpha t}F(t)=
\begin{cases}
\frac{1}{b\mathbb{E}V-1} & \mbox{if}\ \mathbb{E}V<\infty, \\ 
0 & \mbox{otherwise.}
\end{cases}
\]
\end{prop}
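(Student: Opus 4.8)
The plan is to read off the precise behaviour of $W$ from the Wiener--Hopf factorization \eqref{eq:wienerHopf} together with the renewal structure of the ascending ladder height process $H$. Since $\psi$ is convex with $\psi(0)=0$ and $\psi'(0+)<0$, its largest root is $\alpha$ and the right-inverse satisfies $\phi_{Y}(0)=\alpha$, so \eqref{eq:wienerHopf} becomes $\psi(\lambda)=(\lambda-\alpha)\psi_{H}(\lambda)$. I would first record three elementary consequences: $\psi_{H}(0)=0$ and $\psi_{H}>0$ on $(0,\infty)$ (so $H$ is a genuine subordinator); $\psi_{H}(\alpha)=\psi'(\alpha)$ (by l'Hôpital, since $\psi(\alpha)=0$) and $\psi_{H}'(0+)=-\psi'(0+)/\alpha$ (direct differentiation); and, since $\int_{(0,\infty]}(1-e^{-r\lambda})\Lambda(dr)\to b$, that $\psi_{H}(\lambda)\to1$ as $\lambda\to\infty$. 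This last point shows $H$ is a driftless, rate-one compound Poisson subordinator, i.e. $\psi_{H}(\lambda)=1-\widehat{\rho}(\lambda)$ with $\widehat{\rho}(\lambda)=\int_{\mathbb{R}_{+}}e^{-\lambda x}\rho(dx)$ for a probability measure $\rho$ on $(0,\infty)$ (the law of a ladder jump), whose potential measure is the renewal measure $U_{H}=\sum_{n\ge0}\rho^{*n}$ with Laplace transform $1/\psi_{H}$.

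Next I would invert the Laplace transform of $W$. Combining \eqref{eq:laplacescale} with the factorization gives, for $\lambda>\alpha$,
\[
\int_{(0,\infty)}e^{-\lambda t}W(t)\,dt=\frac{1}{\psi(\lambda)}=\frac{1}{\lambda-\alpha}\cdot\frac{1}{\psi_{H}(\lambda)},
\]
the product of the transforms of $t\mapsto e^{\alpha t}$ and of $U_{H}$. By uniqueness of Laplace transforms this yields $e^{-\alpha t}W(t)=\mu([0,t])$, where $\mu(dx):=e^{-\alpha x}U_{H}(dx)$. Because $\psi_{H}(\alpha)=\psi'(\alpha)>0$, the measure $\mu$ is finite with total mass $1/\psi'(\alpha)$, so setting
\[
F(t):=\mu\bigl((t,\infty)\bigr)=\frac{1}{\psi'(\alpha)}-e^{-\alpha t}W(t)
\]
produces exactly the claimed identity $W(t)=e^{\alpha t}/\psi'(\alpha)-e^{\alpha t}F(t)$; as the tail of a finite positive measure with unbounded support, $F$ is automatically positive, non-increasing and càdlàg.

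The heart of the argument is the limit of $e^{\alpha t}F(t)=\int_{(0,\infty)}e^{-\alpha y}U_{H}(t+dy)$. With $\widetilde{\rho}(dx):=e^{-\alpha x}\rho(dx)$, a sub-probability measure satisfying $\int e^{\alpha x}\widetilde{\rho}(dx)=1$, the identity $\mu=\delta_{0}+\widetilde{\rho}*\mu$ translates into the defective renewal equation
\[
F(t)=\int_{[0,t]}F(t-s)\,\widetilde{\rho}(ds)+\frac{1}{\psi'(\alpha)}\,\widetilde{\rho}\bigl((t,\infty)\bigr).
\]
Un-tilting by $e^{\alpha\,\cdot}$ as in Remark \ref{rem:1} turns $\widetilde{\rho}$ into the proper law $\rho$ and gives a renewal equation for $G(t):=e^{\alpha t}F(t)$ with forcing term
\[
\widetilde{h}(t)=\frac{1}{\psi'(\alpha)}\int_{(t,\infty)}e^{-\alpha(x-t)}\rho(dx),
\]
a non-negative càdlàg function with $\int_{0}^{\infty}\widetilde{h}=1/\alpha$ (using $\int(1-e^{-\alpha x})\rho(dx)=\psi_{H}(\alpha)=\psi'(\alpha)$) and whose suprema over unit intervals are summable thanks to the weight $e^{-\alpha(x-t)}$, hence DRI by Lemma \ref{lem:DRI}. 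Theorem \ref{thm:nonLatticeRen}, with $\rho$ non-lattice and mean $m:=\int x\,\rho(dx)$, then yields $e^{\alpha t}F(t)\to \tfrac{1}{m}\cdot\tfrac{1}{\alpha}$, interpreted as $0$ when $m=\infty$.

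Finally I would identify the constant: $m=\psi_{H}'(0+)=-\psi'(0+)/\alpha$, and \eqref{eq:laplaceDerivative} gives $\psi'(0+)=1-b\mathbb{E}[V]$, whence $m=(b\mathbb{E}[V]-1)/\alpha$ and $1/(\alpha m)=1/(b\mathbb{E}[V]-1)$ when $\mathbb{E}[V]<\infty$; when $\mathbb{E}[V]=\infty$ one has $m=\infty$ and the limit is $0$, matching both cases of the statement. I expect the main obstacle to be this asymptotic step rather than the algebra: one must confirm that $\rho$ is non-lattice (which holds, for instance, as soon as $\mathbb{P}_{V}$ is non-lattice, the ladder-height law then carrying an absolutely continuous component) and justify the direct Riemann integrability of $\widetilde{h}$ uniformly across the two regimes $\mathbb{E}[V]<\infty$ and $\mathbb{E}[V]=\infty$. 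By contrast, the factorization identities and the Laplace inversion are routine, and the positivity, monotonicity and càdlàg properties of $F$ are immediate from its representation as the tail of the finite measure $\mu$.
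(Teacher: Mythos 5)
Your proposal is correct and, despite the different packaging, it is essentially the paper's own argument: both proofs run through the Wiener--Hopf factorization \eqref{eq:wienerHopf}, identify the relevant ladder height process as a driftless compound Poisson subordinator, write $\frac{1}{\psi'(\alpha)}-e^{-\alpha t}W(t)$ as the tail of an (exponentially tilted) renewal measure, and finish with a defective renewal equation, the tilting of Remark \ref{rem:1}, and Theorem \ref{thm:nonLatticeRen}. The only structural difference is where the tilt is performed: the paper tilts the L\'evy process first (replacing $Y$ by $Y^{\sharp}$, the contour conditioned to extinction, with $\psi^{\sharp}(\lambda)=\psi(\lambda+\alpha)$ and $\phi_{Y^{\sharp}}(0)=0$, and then quoting the representation $W^{\sharp}(t)=\int_{0}^{\infty}\mathbb{P}(H^{\sharp}_{x}\le t)\,dx$), while you keep the supercritical $Y$, invert $1/\psi(\lambda)=\frac{1}{\lambda-\alpha}\cdot\frac{1}{\psi_{H}(\lambda)}$ directly, and tilt only at the level of the renewal measure $U_{H}$. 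Since the paper's L\'evy measure satisfies $\Upsilon(dr)=e^{-\alpha r}\rho(dr)$, your $\mu$ is exactly the paper's potential measure $U$ and the two renewal equations coincide; your Laplace inversion is a nice self-contained substitute for the quoted scale-function/potential-measure identity.

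The one genuine loose end is the non-lattice hypothesis of Theorem \ref{thm:nonLatticeRen}, and the fix you sketch is too weak: you verify that $\rho$ is non-lattice only when $\mathbb{P}_{V}$ is non-lattice, whereas Proposition \ref{lem:WComp} assumes nothing of the sort ($\mathbb{P}_{V}=\delta_{v_{0}}$ is allowed). The clean resolution is that $\rho$ is \emph{always} absolutely continuous: inverting $1-\widehat{\rho}(\lambda)=\psi(\lambda)/(\lambda-\alpha)$, or tilting the paper's computation of $\Upsilon$, gives
\[
\rho(dr)=b\,\mathbb{E}\left[e^{-\alpha(V-r)}\mathds{1}_{V>r}\right]dr
\]
for every $\mathbb{P}_{V}$, so the lattice case never arises. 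The same formula settles your worry about direct Riemann integrability uniformly over the two regimes: it gives $\widetilde{h}(t)=\frac{b}{\psi'(\alpha)}\,\mathbb{E}\left[(V-t)e^{-\alpha(V-t)}\mathds{1}_{V>t}\right]$, a continuous non-negative function whose suprema over the intervals $[n,n+1)$ are, for each fixed value $V=v$ and after the change of variable $s=v-t$, dominated by the sups of $s e^{-\alpha s}$ over unit intervals, hence summable with a bound depending only on $\alpha$; so $\widetilde{h}$ is dominated by a DRI step function and item 2 of Lemma \ref{lem:DRI} applies whether or not $\mathbb{E}V<\infty$ (the infinite-mean case being already covered by the statement of Theorem \ref{thm:nonLatticeRen}, where $\gamma=0$). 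With these two points filled in your proof is complete; it is only fair to note that on precisely these two technical points the paper's own proof is no more explicit than yours.
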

\begin{proof}
\label{lem: compScale}
Let $Y^{\sharp}$ be a spectrally negative L\'evy process with Laplace exponent given by
\[
\psi^{\sharp}(\lambda)=\lambda-\int_{\mathbb{R}_{+}}\left(1-e^{-\lambda x} \right)e^{-\alpha x}b\ \mathbb{P}_{V}(dx).
\]
It is known that $Y^{\sharp}$ has the law of the contour process of the supercritical splitting tree with lifespan measure $\mathbb{P}_{V}$ conditioned to extinction (see \cite{L10}). In this case the largest root of $\psi^{\sharp}$ is zero, meaning that the process $Y^{\sharp}$ does not go to infinity and that $\phi_{Y^{\sharp}}(0)=0$. Elementary manipulations on Laplace transform show that the scale function $W^{\sharp}$ of $Y^{\sharp}$ is related to $W$ by
\[
W^{\sharp}(t)=e^{-\alpha t}W(t),\quad t\in\mathbb{R}_{+}.
\]

Let $H^{\sharp}$ be the ascending ladder subordinator associated to the L\'evy process $Y^{\sharp}$.
In the case where $\phi_{Y^{\sharp}}(0)=0$, and in this case only, the scale function $W^{\sharp}$ can be rewritten as (see \cite{Kyp} or use Laplace transform),
\begin{equation}
\label{eq:scalePotMes}
W^{\sharp}(t)=\int_{0}^{\infty}\mathbb{P}\left(H^{\sharp}_{x}\leq t \right)dx.
\end{equation}
In other words, if we denote by $U$ the potential measure of $H^{\sharp}$,
\[
W^{\sharp}(t)=U[0,t].
\]
Now, it is easily seen from \eqref{eq:wienerHopf} that the Laplace exponent $\psi_{H^{\sharp}}$ of $H^{\sharp}$ takes the form,
\[
\psi_{H^{\sharp}}\left(\lambda \right)=\psi'(\alpha)-\int_{[0,\infty]}\left(1-e^{-\lambda r}\right)\Upsilon(dr),
\]
where
\[
\Upsilon(dr)=\int_{(r,\infty)}e^{-\alpha v}b\mathbb{P}_{V}(dv)dr=\mathbb{E}\left[e^{-\alpha V}\mathds{1}_{V>r} \right]bdr.
\]
Moreover,
\[
\Upsilon(\mathbb{R}_{+})=1-\psi'(\alpha),
\]
which mean that $H^{\sharp}$ is a compound Poisson process with jump rate $1-\psi'(\alpha)$, jump distribution $J(dr):=\frac{\mathbb{E}\left[e^{-\alpha V}\mathds{1}_{V>r} \right]}{1-\psi'(\alpha)}dr$, and killed at rate $\psi'(\alpha)$. It is well known (or elementary by conditioning on the number of jumps at time $x$), that
\[
\mathbb{P}_{H^{\sharp}_{x}}(dt)=e^{-\psi'(\alpha) x}\sum_{k\geq 0}e^{-\left(1-\psi'(\alpha) \right)x}\frac{\left(\left(1-\psi'(\alpha) \right)x\right)^{k}}{k!}J^{\star k}(dt).
\]
Some calculations now lead to,
\[
U(dx)=\sum_{k\geq 0}\Upsilon^{\star k}(dx).
\]
From this point, since $\Upsilon$ is a sub-probability, $U(x):=U[0,x]$ satisfies the following defective renewal equation,
\[
U(x)=\int_{\mathbb{R}_{+}}U(x-u)\Upsilon(du)+\mathds{1}_{\mathbb{R}_{+}}(x).
\]
Finally, since
\[
\int_{\mathbb{R}_{+}}e^{\alpha x}\Upsilon(dx)=1,
\]
and since, from Lemma \ref{lem:DRI},
\[
t\rightarrow U(t,\infty),
\]
is clearly a directly Riemann integrable function as a positive decreasing integrable function.
Hence, as suggested in Remark \ref{rem:1},
\[
e^{\alpha x}\left(U(\mathbb{R}_{+})-U(x)\right)\underset{x\to\infty}{\longrightarrow}\frac{1}{\alpha \mu},
\]
with 
\[
\mu=\int_{\mathbb{R}_{+}}re^{\alpha r}\Upsilon(dr)=\frac{1}{\alpha}\left(b\mathbb{E}V-1 \right),
\]
if $V$ is integrable. In the case where $V$ is not integrable, the limit is $0$.

To end the proof, note using relation \eqref{eq:scalePotMes} and the fact that $H^{\sharp}$ is killed at rate $\psi'(\alpha)$ that,
\[
W^{\sharp}(t)=\frac{1}{\psi'(\alpha)}-U(t,\infty).\qedhere
\]
\end{proof}
\section{Proof of Theorem \ref{thm:tclN}}
\label{proof:th1}
We begin the proof of Theorem \ref{thm:tclN} by computing moments, and analysing their asymptotic behaviours. A first part is devoted to the case of a splitting tree where the lifetime of the root is distributed as $V$ whereas a second part study the case where the lifespan of the root is arbitrary (for instance, as the subtrees described by Figure \ref{fig:subtrees}).
\subsection{Preliminary moments estimates}
This section is devoted to the calculus of the expectation of $\left(N_{t}-e^{\alpha t}\mathcal{E}\right)^{2}$. We start with the simple case where the initial individual has life-length distributed as $V$. Secondly, we study the asymptotic behavior of these moments. In Subsection \ref{ssec:Ntarbi}, we prove similar result for arbitrary initial distributions.

The expectations above are given with respect to $\mathbb{P}$, however since $N_{t}$ and $\mathcal{E}$ vanish on the extinction event, we can easily recover the results with respect to $\mathbb{P}_{t}$ by using  \eqref{eq:Prnonex} and \eqref{eq:probaNoCond} (see Corollary \ref{cor:withCond}).

\subsubsection{Case $V_{\emptyset}\overset{\mathcal{L}}{=}V$}
\label{ssec:moment1}
We start with the computation of $\mathbb{E}N_{t}\mathcal{E}$.

\begin{lem}[Join moment of $\mathcal{E}$ and $N_{t}$]
\label{lem:joinMom}

The function $t\to\mathbb{E}\left[N_{t}\mathcal{E}\right]$ is the unique solution bounded on finite intervals of the renewal equation,
\begin{align}
f(t)=&\int_{\mathbb{R}_{+}}f(t-u)be^{-\alpha u}\mathbb{P}\left(V>u\right)du\nonumber
\\&+\alpha b\mathbb{E}\left[N_{\cdot}\right]\star \left( \int_{\mathbb{R}_{+}}e^{-\alpha v}\mathbb{P}\left(V>\cdot,V>v\right)dv\right)(t)\nonumber \\&+\alpha\int_{\mathbb{R}_{+}}e^{-\alpha v}\mathbb{P}\left(V>t,V>v\right)dv\label{eq:renewNt},
\end{align}
and its solution is given by
\[
\left(1+\frac{\alpha}{b}-e^{-\alpha t}\right)W(t)-\left(1-e^{-\alpha t} \right)W\star\mathbb{P}_{V}(t).
\]
\end{lem}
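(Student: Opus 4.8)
The plan is to derive the renewal equation \eqref{eq:renewNt} from the branching decomposition \eqref{eq:decomposition}, observe that it has a unique solution bounded on finite intervals, and then check that the proposed closed form is that solution. First I would decompose the limit $\mathcal{E}$ along the same grafting structure that produces $N_t$. Multiplying \eqref{eq:decomposition} by $\psi'(\alpha)e^{-\alpha t}$ and letting $t\to\infty$, note that for $t>V_\emptyset$ the boundary term $\mathds{1}_{V_\emptyset>t}$ vanishes and the integral is the finite sum over the (a.s.\ finitely many) atoms $u<V_\emptyset$ of $\xi$, so the almost sure convergence of Theorem \ref{thm:ASCVNt2} passes termwise and gives
\[
\mathcal{E}=\int_{\mathbb{R}_+}e^{-\alpha u}\mathcal{E}^{(\xi_u)}\mathds{1}_{V_\emptyset>u}\ \xi(du),
\]
where $\mathcal{E}^{(i)}:=\psi'(\alpha)\lim_s e^{-\alpha s}N^{(i)}_s$ is the limit attached to the $i$th grafted subtree, so that $(N^{(i)}_\cdot,\mathcal{E}^{(i)})$ is a copy of $(N_\cdot,\mathcal{E})$ and distinct atoms carry independent pairs. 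I would also record $\mathbb{E}\mathcal{E}=\alpha/b$: by $L^2$ (hence $L^1$) convergence, $\mathbb{E}\mathcal{E}=\psi'(\alpha)\lim_t e^{-\alpha t}\mathbb{E}N_t$, and \eqref{eq:NtNoCond}, Proposition \ref{lem:WComp} and \eqref{eq:intalpha} give $e^{-\alpha t}\mathbb{E}N_t\to \alpha/(b\psi'(\alpha))$.

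Next I would multiply the two representations and take expectations. Writing $N_t\mathcal{E}$ as the product of the integral in \eqref{eq:decomposition} (plus its boundary term) with the integral above, the expectation splits into three contributions, handled by the variant of Lemma \ref{lem:stocInt} with two \emph{coupled} marks $(N^{(\cdot)}_{t-v},\,e^{-\alpha u}\mathcal{E}^{(\cdot)})$ per atom. The \emph{diagonal} part $u=v$ involves a single subtree, so its mark expectation is $e^{-\alpha u}\mathbb{E}[N_{t-u}\mathcal{E}]=e^{-\alpha u}f(t-u)$ and produces the first, self-referential term of \eqref{eq:renewNt}; the \emph{off-diagonal} part uses independence of distinct atoms together with $\mathbb{E}\mathcal{E}=\alpha/b$ and $\mathbb{E}N_{t-v}$, the factor $b^2\cdot(\alpha/b)=\alpha b$ giving the $\mathbb{E}[N_\cdot]$–convolution term; and the cross term $\mathds{1}_{V_\emptyset>t}\times(\text{integral for }\mathcal{E})$, computed by conditioning on $(V_\emptyset,\xi)$ and using $\{V_\emptyset>t,\,V_\emptyset>u\}$, yields the last term. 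This shows that $f(t)=\mathbb{E}[N_t\mathcal{E}]$ solves \eqref{eq:renewNt}.

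For uniqueness I would check that the kernel measure $\Gamma(du)=be^{-\alpha u}\mathbb{P}(V>u)\,du$ is a probability measure: Fubini gives $\int_0^\infty e^{-\alpha u}\mathbb{P}(V>u)\,du=(1-\mathbb{E}e^{-\alpha V})/\alpha$, which equals $1/b$ by \eqref{eq:intalpha}, so $\Gamma(\mathbb{R}_+)=1$; since the inhomogeneous term is built from $\mathbb{E}N_\cdot=W-W\star\mathbb{P}_V$ and is thus finite and locally bounded, the equation has a unique solution bounded on finite intervals. It then remains to verify that $g(t)=\bigl(1+\tfrac{\alpha}{b}-e^{-\alpha t}\bigr)W(t)-\bigl(1-e^{-\alpha t}\bigr)W\star\mathbb{P}_V(t)$ satisfies \eqref{eq:renewNt}. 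I would do this by substituting $g$ and reducing both sides to convolutions of $W$ via the fundamental identity $W(t)=1+b\int_0^t W(t-s)\mathbb{P}(V>s)\,ds$ (equivalent to $\psi\,T_{\mathcal{L}}W=1$) and \eqref{eq:NtNoCond}, the factors $e^{-\alpha\cdot}$ being absorbed exactly as in the change of measure of Remark \ref{rem:1}. By uniqueness $f=g$, and as a consistency check $e^{-\alpha t}g(t)\to 2\alpha/(b\psi'(\alpha))=\psi'(\alpha)^{-1}\mathbb{E}\mathcal{E}^2$.

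The main obstacle is the bookkeeping in the second step. Lemma \ref{lem:stocInt} is stated for a single i.i.d.\ family, whereas here each atom carries the coupled pair $(N^{(i)}_\cdot,\mathcal{E}^{(i)})$, and it is precisely the coincidence $u=v$ that produces $e^{-\alpha u}f(t-u)$ and hence the self-referential (renewal) structure; one must therefore re-run the argument of Lemma \ref{lem:stocInt} with two marks per atom and separate the diagonal from the off-diagonal cleanly, while also justifying the termwise passage to the limit defining $\mathcal{E}$. The final verification that $g$ solves \eqref{eq:renewNt}, though only a convolution computation, is the most calculation-heavy part.
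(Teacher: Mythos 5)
Your proposal has the right architecture but takes a genuinely different route from the paper on both halves of the argument. For the derivation of \eqref{eq:renewNt}, the paper never decomposes $\mathcal{E}$ almost surely: it computes $\mathbb{E}\left[N_{t}N_{s}\right]$ for finite $s>t$ from \eqref{eq:decomposition} and Lemma \ref{lem:stocInt} (the diagonal there produces $\mathbb{E}\left[N_{t-u}N_{s-u}\right]$), then divides by $W(s)$ and lets $s\to\infty$, justifying the interchange of limit and integrals by the $L^{2}$ convergence of Theorem \ref{thm:ASCVNt} and Cauchy--Schwarz; your diagonal/off-diagonal bookkeeping with coupled marks $\left(N^{(i)},\mathcal{E}^{(i)}\right)$ is the $s\to\infty$ shadow of that computation, and your constants ($\mathbb{E}\mathcal{E}=\alpha/b$, the factor $b^{2}\cdot\alpha/b=\alpha b$) are correct. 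For the closed form, the paper does not verify-and-invoke-uniqueness: it Laplace-transforms the renewal equation, solves for $T_{\mathcal{L}}f$, and inverts using \eqref{eq:laplacescale} and \eqref{eq:laplacepv}. Your plan is legitimate and computationally equivalent: your kernel-mass computation $\Gamma(\mathbb{R}_{+})=b\int_{0}^{\infty}e^{-\alpha u}\mathbb{P}(V>u)\,du=1$ via \eqref{eq:intalpha} is exactly the right uniqueness input, and your identity $W(t)=1+b\int_{0}^{t}W(t-s)\mathbb{P}(V>s)\,ds$ is correct (it is $\psi\, T_{\mathcal{L}}W=1$ in convolution form). What each approach buys: the paper's derives the formula rather than checking a guess, while yours keeps all manipulations at the level of the limit object; but yours pays for this with the a.s.\ decomposition of $\mathcal{E}$, which is where the one real problem sits.

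The genuine gap is your justification of $\mathcal{E}=\int_{\mathbb{R}_{+}}e^{-\alpha u}\mathcal{E}^{(\xi_{u})}\mathds{1}_{V_{\emptyset}>u}\,\xi(du)$ by "a.s.\ finitely many atoms $u<V_{\emptyset}$". The model allows $\mathbb{P}_{V}(\{\infty\})>0$ (the lifespan measure lives on $(0,\infty]$, and the Yule case $\mathbb{P}_{V}=\delta_{\infty}$ is explicitly included), so on the event $\{V_{\emptyset}=\infty\}$ the root begets infinitely many subtrees and you are interchanging the limit $t\to\infty$ with an infinite sum whose number of nonzero terms grows with $t$. That interchange needs a domination argument, and it is not free: $e^{-\alpha t}N_{t}$ is not a martingale for general $\mathbb{P}_{V}$, so one cannot invoke Doob for $\sup_{r}e^{-\alpha r}N_{r}$ — this is precisely why the paper's own Lemma \ref{lem:dec} routes the analogous decomposition through $N^{\infty}$, which \emph{is} a Yule process under $\mathbb{P}_{\infty}$. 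The cleanest repair that preserves your structure is to prove the identity in $L^{1}$ rather than a.s.: writing $\tilde{\mathcal{E}}$ for the right-hand side and $\epsilon(r):=\mathbb{E}\left|e^{-\alpha r}N_{r}-\mathcal{E}/\psi'(\alpha)\right|$ (bounded and vanishing by Theorem \ref{thm:ASCVNt}), Lemma \ref{lem:stocInt} gives
\[
\mathbb{E}\left|e^{-\alpha t}N_{t}-\tilde{\mathcal{E}}/\psi'(\alpha)\right|\leq\int_{0}^{t}be^{-\alpha u}\epsilon(t-u)\mathbb{P}(V>u)\,du+e^{-\alpha t}+\frac{\alpha}{b\psi'(\alpha)}\int_{t}^{\infty}be^{-\alpha u}\mathbb{P}(V>u)\,du,
\]
which tends to $0$ by dominated convergence, so $\tilde{\mathcal{E}}=\mathcal{E}$ a.s. Since you only need $\mathbb{E}\left[N_{t}\mathcal{E}\right]$, this $L^{1}$ statement suffices, and with that repair your proof goes through.
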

\begin{proof}
As explained in Section \ref{sec:strategy},
\[
N_{t}=\int_{[0,t]}N^{(\xi_{u})}_{t-u}\mathds{1}_{V_{\emptyset}>u}\ \xi(du)+\mathds{1}_{V_{\emptyset}>t},
\] 
where $\xi$ a Poisson point process with rate $b$ on the real line, $\left(N^{(i)}\right)_{i\geq 1}$ is a family of independent CMJ processes with the same law as $N$ and $V_{\emptyset}$ is the lifespan of the root. Moreover, the three objects $N^{(u)}$, $\xi$ and $V_{\emptyset}$ are independent.

It follows that, for $s>t$
\begin{multline*}
N_{t}N_{s}=\int_{[0,t]\times[0,s]}N^{(\xi_{u})}_{t-u}N^{(\xi_{v})}_{s-v}\mathds{1}_{V_{\emptyset}>u}\mathds{1}_{V_{\emptyset}>v}\ \xi(du)\xi(dv)\\
+\int_{[0,t]}N^{(\xi_{u})}_{t-u}\mathds{1}_{V_{\emptyset}>u}\ \xi(du)\mathds{1}_{V_{\emptyset}>s}+\int_{[0,s]}N^{(\xi_{u})}_{s-u}\mathds{1}_{V_{\emptyset}>u}\ \xi(du)\mathds{1}_{V_{\emptyset}>t}
+\mathds{1}_{V_{\emptyset}>t}\mathds{1}_{V_{\emptyset}>s},
\end{multline*}
and, using Lemma \ref{lem:stocInt},
\begin{align*}
\mathbb{E}N_{t}N_{s}=&\int_{[0,t]}b\mathbb{E}\left[N_{t-u}N_{s-u}\right]\ \mathbb{P}\left(V>u\right)\ du\\
&+\int_{[0,t]\times[0,s]}b^{2}\mathbb{E}\left[N_{t-u}\right]\mathbb{E}\left[N_{s-v}\right]\mathbb{P}\left(V>u,V>v\right)\ du\ dv\\
&+\mathbb{P}\left(V>s\right)\int_{[0,t]}b\mathbb{E}\left[N_{t-u}\right]\ du+\int_{[0,s]}b\mathbb{E}\left[N_{s-u}\right]\mathbb{P}\left(V>u,V>t\right)\ du+\mathbb{P}\left(V>s\right).
\end{align*}
Then, thanks to the estimate
$W(t)=\mathcal{O}\left(e^{\alpha t}\right)$ (see Lemma \ref{lem: asyComp} or \ref{lem:WComp})
and the $L^{1}$ convergence of $W(s)^{-1}N_{t}N_{s}$ to $N_{t}\mathcal{E}$ as $s$ goes to infinity (since, by Theorem \ref{thm:ASCVNt}, $\frac{N_{s}}{W(s)}$ converge in $L^{2}$ and using Cauchy-Schwarz inequality), we can exchange limit and integrals to obtain,
\begin{align*}
\lim\limits_{s\to\infty}\mathbb{E}N_{t}\frac{N_{s}}{W(s)}=\underbrace{\mathbb{E}N_{t}\mathcal{E}}_{:=f(t)}=&\underbrace{\int_{[0,t]}\mathbb{E}\left[N_{t-u}\mathcal{E}\right]e^{-\alpha u}\ \mathbb{P}\left(V>u\right)\ b \ du}_{=:f\star G(t)}\\
&+\underbrace{\int_{[0,t]\times[0,\infty)}\alpha b\mathbb{E}\left[N_{t-u}\right] e^{-\alpha v}\mathbb{P}\left(V>u,V>v\right)\ du\ dv}_{=:\zeta_{1}(t)}\\
&+\underbrace{\int_{[0,\infty]}\alpha e^{-\alpha v}\mathbb{P}\left(V>v,V>t\right)\ dv}_{=:\zeta_{2}(t)},
\end{align*}
where we used that $\lim\limits_{t\to\infty}W(t)^{-1}\mathbb{E}N_{t}=\frac{\alpha}{b}$.

Now, we need to solve the last equation to obtain the last part of the lemma. To do that, we compute the Laplace transform of each part of the equation. Note that, since $W(t)=\mathcal{O}\left(e^{\alpha t}\right)$, it is easy to see that the Laplace transform of each term of \eqref{eq:renewNt} is well-defined as soon as $\lambda>\alpha$ (using Cauchy-Schwarz inequality for the first term).
Now, using \eqref{eq:laplacepv},
\begin{align}
T_{\mathcal{L}}e^{\alpha \cdot}G(\lambda)&=b\int_{\mathbb{R}_{+}}e^{-\lambda t}\mathbb{P}\left(V>t\right)dt=b\int_{\mathbb{R}_{+}}e^{-\lambda t}\int_{(t,\infty)}\mathbb{P}_{V}\left(dv\right)dt\nonumber\\
&=\frac{1}{\lambda}\int_{\mathbb{R}_{+}}\left(1-e^{-\lambda v}\right)\ b\mathbb{P}_{V}\left(dv\right)
=1-\frac{\psi(\lambda)}{\lambda}\label{eq:shortrangeeq}.
\end{align}
So,
\[
T_{\mathcal{L}}G(\lambda)=1-\frac{\psi(\lambda+\alpha)}{\lambda+\alpha}.
\]
Then,
\begin{multline*}
T_{\mathcal{L}}\zeta_{1}(\lambda)=\alpha T_{\mathcal{L}}\mathbb{E}N_{.}(\lambda)T_{\mathcal{L}}\left(b \int_{\mathbb{R}_{+}}e^{-\alpha v}\mathbb{P}\left(V>\cdot,V>v\right)dv\right)(\lambda)\\=\left(\frac{\lambda}{\psi(\lambda)}-1\right)\underbrace{T_{\mathcal{L}}\left(\alpha \int_{\mathbb{R}_{+}}e^{-\alpha v}\mathbb{P}\left(V>\cdot,V>v\right)dv\right)(\lambda)}_{=\mathcal{L}\zeta_{2}(\lambda)}.
\end{multline*}
and, using \eqref{eq:shortrangeeq}, we get
\begin{equation*}
T_{\mathcal{L}}\zeta_{2}(\lambda)=\alpha\int_{\mathbb{R}_{+}}e^{-\lambda t}\int_{\mathbb{R_{+}}}e^{-\alpha v}\mathbb{P}\left(V>t,V>v\right)dv\ dt
=\frac{1}{b}\left(\frac{\psi\left(\lambda+\alpha \right)}{\lambda}- \frac{\psi(\lambda)}{\lambda}\right).
\end{equation*}
Finally, we obtain,
\[
T_{\mathcal{L}}f(\lambda)=T_{\mathcal{L}}f(\lambda)\left(1-\frac{\psi(\lambda+\alpha)}{\lambda+\alpha} \right)+\left(\frac{\lambda}{\psi(\lambda)}-1\right)\frac{1}{b}\left(\frac{\psi\left(\lambda+\alpha \right)}{\lambda}- \frac{\psi(\lambda)}{\lambda}\right)+\frac{1}{b}\left(\frac{\psi\left(\lambda+\alpha \right)}{\lambda}- \frac{\psi(\lambda)}{\lambda}\right).
\]
Hence,
\[
T_{\mathcal{L}}f(\lambda)=\frac{\lambda}{b}\left(\frac{1}{\psi(\lambda)}-\frac{1}{\psi(\lambda+\alpha)} \right).
\]
Finally, using \eqref{eq:laplacescale} and
\[
bT_{\mathcal{L}}\left(W\star\mathbb{P}_{V}\right)(\lambda)=\frac{\left(\psi(\lambda)-b+\lambda \right)}{\psi(\lambda)},
\]
allows to inverse the Laplace transform of $f$ and get the result.
\end{proof}

Lemma \ref{lem:joinMom} allows us to compute the expected quadratic error.
\begin{lem}[Quadratic error in the convergence of $N_{t}$]
\label{lem:quadeR}
Let $\mathcal{E}$ the a.s.\ limit of $\psi'(\alpha)e^{-\alpha t}N_{t}$. Then,
\[
\lim\limits_{t\to\infty}e^{-\alpha t}\mathbb{E}\left(\psi'(\alpha)N_{t}-e^{\alpha t}\mathcal{E}\right)^{2}=\frac{\alpha}{b}\left(2-\psi'(\alpha) \right).
\]
\end{lem}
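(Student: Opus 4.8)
The plan is to expand the square and reduce the statement to the asymptotics of three moments:
\[
\mathbb{E}\left(\psi'(\alpha)N_{t}-e^{\alpha t}\mathcal{E}\right)^{2}=\psi'(\alpha)^{2}\mathbb{E}\left[N_{t}^{2}\right]-2\psi'(\alpha)e^{\alpha t}\mathbb{E}\left[N_{t}\mathcal{E}\right]+e^{2\alpha t}\mathbb{E}\left[\mathcal{E}^{2}\right].
\]
The joint moment $\mathbb{E}[N_{t}\mathcal{E}]$ is already furnished by Lemma \ref{lem:joinMom}, so the first task is to obtain tractable closed forms for the other two and then feed in the refined asymptotics of $W$ from Proposition \ref{lem:WComp}.

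For the second moment I would use that, by \eqref{eq:loiNt}, $N_{t}$ is geometric with parameter $1/W(t)$ conditionally on $\{N_{t}>0\}$. The elementary identity $\mathbb{E}[N_{t}^{2}\mid N_{t}>0]=2W(t)^{2}-W(t)$, combined with \eqref{eq:probaNoCond} and \eqref{eq:NtNoCond}, collapses to the clean expression $\mathbb{E}[N_{t}^{2}]=2W(t)\mathbb{E}[N_{t}]-\mathbb{E}[N_{t}]$. The same two relations let me rewrite the conclusion of Lemma \ref{lem:joinMom} as
\[
\mathbb{E}\left[N_{t}\mathcal{E}\right]=\mathbb{E}\left[N_{t}\right]+\frac{\alpha}{b}W(t)-e^{-\alpha t}\mathbb{E}\left[N_{t}\right].
\]
Finally $\mathbb{E}[\mathcal{E}^{2}]$ is a constant, which I would identify as $2\alpha/b$: either from the $L^{2}$ convergence of Theorem \ref{thm:ASCVNt2} applied to the above formula for $\mathbb{E}[N_{t}^{2}]$, or directly from the fact that $\mathcal{E}$ is exponential under $\mathbb{P}_{\infty}$ and vanishes on extinction, together with \eqref{eq:Prnonex}.

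The decisive step is the asymptotic bookkeeping. Writing $W(t)=e^{\alpha t}/\psi'(\alpha)-e^{\alpha t}F(t)$ with $e^{\alpha t}F(t)\to c$ as in Proposition \ref{lem:WComp}, I would insert this into $\mathbb{E}[N_{t}]=W(t)-W\star\mathbb{P}_{V}(t)$ and use \eqref{eq:intalpha} together with $e^{\alpha t}\mathbb{P}(V>t)\to 0$ to show that $\mathbb{E}[N_{t}]=\tfrac{\alpha}{b}e^{\alpha t}/\psi'(\alpha)+\delta(t)$ where $\delta(t)\to 0$ (the $\mathcal{O}(1)$ contributions from $F$ cancel against those coming from the convolution against $\mathbb{P}_{V}$). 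Substituting the three moments into the expanded square, multiplying by $e^{-\alpha t}$ and regrouping, the expression becomes a sum of four terms of order $e^{\alpha t}$ plus the single term $\psi'(\alpha)(2-\psi'(\alpha))e^{-\alpha t}\mathbb{E}[N_{t}]$, which tends to $(2-\psi'(\alpha))\alpha/b$. The whole point is that the four divergent terms must cancel, leaving precisely this finite limit.

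The main obstacle is exactly this cancellation: each of the three moments grows like $e^{\alpha t}$, so at leading order the expression is an indeterminate difference of large quantities and the first-order estimate $W(t)\sim e^{\alpha t}/\psi'(\alpha)$ alone is useless. The role of Proposition \ref{lem:WComp} is to supply the next order, and the delicate part is checking that the $\mathcal{O}(e^{\alpha t})$ remainders in the four large terms sum to zero and that the residual $\delta$- and $F$-contributions vanish in the limit. I would carry this out by grouping the terms according to their prefactors (the pure $e^{\alpha t}$ term, the $W(t)$ terms, the $\mathbb{E}[N_{t}]$ terms, and the $\delta(t)$ terms) and verifying that the coefficients cancel, so that only the order-one term above survives.
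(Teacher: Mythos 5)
Your proposal is correct and follows essentially the same route as the paper's own proof: expand the square, get $\mathbb{E}N_{t}^{2}$ from the geometric law \eqref{eq:loiNt} together with \eqref{eq:NtNoCond}--\eqref{eq:probaNoCond}, take $\mathbb{E}N_{t}\mathcal{E}$ from Lemma \ref{lem:joinMom}, identify $\mathbb{E}\mathcal{E}^{2}=2\alpha/b$ via \eqref{eq:Prnonex}, and let the second-order expansion of Proposition \ref{lem:WComp} drive the cancellation of the divergent terms; your regrouped bookkeeping does close (the $e^{\alpha t}$ coefficients sum to zero and the two $\mu$-constants cancel), exactly as in the paper. The one slip is in your justification of $\mathbb{E}N_{t}=\frac{\alpha}{b\psi'(\alpha)}e^{\alpha t}+o(1)$: the auxiliary claim $e^{\alpha t}\mathbb{P}(V>t)\to 0$ is false in general, since $V$ may have tails heavier than $e^{-\alpha t}$ or even an atom at $+\infty$ (the Yule case $\mathbb{P}_{V}=\delta_{\infty}$ is explicitly allowed). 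What you actually need, and what is true, is
\[
e^{\alpha t}\int_{(t,\infty]}e^{-\alpha u}\,\mathbb{P}_{V}(du)=\int_{(t,\infty]}e^{-\alpha(u-t)}\,\mathbb{P}_{V}(du)\underset{t\to\infty}{\longrightarrow}0
\]
(bound the integrand by $1$ on $(t,2t]$, whose $\mathbb{P}_{V}$-mass vanishes, and by $e^{-\alpha t}$ beyond), together with the remark that the cancellation of $-e^{\alpha t}F(t)\to-\mu$ against the convolution term, whose limit is $\mu\,\mathbb{P}(V<\infty)$, is legitimate only because Proposition \ref{lem:WComp} forces $\mu=0$ whenever $\mathbb{E}V=\infty$, in particular whenever $\mathbb{P}(V=\infty)>0$.
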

\begin{proof}
Let
\[
\mu:=\lim\limits_{t\to \infty}e^{\alpha t}F(t),
\]
where $F$ is defined in Proposition \ref{lem:WComp}.
We have, using Proposition \ref{lem:WComp} and \eqref{eq:intalpha},
\begin{align*}
\int_{[0,t]}W(t-u)\mathbb{P}_{V}(du)&=\frac{e^{\alpha t}}{\psi'(\alpha)}\left(1-\frac{\alpha}{b} \right)-\mu-\frac{e^{\alpha t}}{\psi'(\alpha)}\int_{(t,\infty)}e^{-\alpha u}\mathbb{P}_{V}(du)+\int_{[0,t]}\left(\mu-e^{\alpha (t-u)}F(t-u)\right)\mathbb{P}_{V}(du)\\
&=\frac{e^{\alpha t}}{\psi'(\alpha)}\left(1-\frac{\alpha}{b} \right)-\mu+o(1).
\end{align*}
Hence, the expression of $\mathbb{E}N_{t}\mathcal{E}$ given by Lemma \ref{lem:joinMom} can be rewritten, thanks to Lemmas \ref{lem:WComp}, as
\begin{equation}
\label{eq:joinMom}
\mathbb{E}N_{t}\mathcal{E}=\frac{2\alpha e^{\alpha t}}{b\psi'(\alpha)}-\frac{\alpha}{b}\left(\frac{1}{\psi'(\alpha)}+\mu \right)+o(1),
\end{equation}
Using \eqref{eq:loiNt} and \eqref{eq:probaNoCond} in conjunction with Proposition \ref{lem:WComp}, we also have
\begin{equation}
\label{eq:joinMomQuad}
e^{-\alpha t}\mathbb{E}N_{t}^{2}=2\frac{\alpha e^{\alpha t}}{b\psi'(\alpha)^{2}}-\frac{2\alpha\mu}{b\psi'(\alpha)}-\frac{\alpha}{b\psi'(\alpha)}+o(1).
\end{equation}
Hence, it finally follows from \eqref{eq:joinMom} and \eqref{eq:joinMomQuad} that
\begin{align*}
e^{-\alpha t}\mathbb{E}\left(\psi'(\alpha)N_{t}-e^{\alpha t}\mathcal{E}\right)^{2}&=\psi'(\alpha)^{2}e^{-\alpha t}\mathbb{E}N_{t}^{2}-2\psi'(\alpha)\mathbb{E}N_{t}\mathcal{E}+\frac{2\alpha e^{\alpha t}}{b}\\
&=-2\frac{\alpha\mu}{b}\psi'(\alpha)-\frac{\alpha\psi'(\alpha)}{b}+2\frac{\alpha}{b}\left(1+\psi'(\alpha)\mu \right)+o(1)\\
&=\frac{\alpha}{b}\left(2-\psi'(\alpha) \right)+o(1).
\end{align*}

\end{proof}

It is worth noting that, using \eqref{eq:probaNoCond} and the method above, we have the following result.
\begin{cor}
\label{cor:withCond}
We have
\begin{equation}
\label{eq:survie}
\frac{1}{\mathbb{P}\left(N_{t}>0\right)}=\frac{b}{\alpha}-\frac{b\mu\psi'(\alpha)}{\alpha}e^{-\alpha t}+o(e^{-\alpha t}),
\end{equation}
which leads to
 \begin{equation}
 \label{eq:AsNtE}
  \mathbb{E}_{t}N_{t}\mathcal{E}=\frac{2e^{\alpha t}}{\psi'(\alpha)}-\frac{1}{\psi'(\alpha)}-3\mu+o(1).
 \end{equation}
 
 \end{cor}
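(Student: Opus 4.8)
The plan is to derive both displayed formulas directly from the scale-function expansion of Proposition \ref{lem:WComp} and from the joint-moment estimate \eqref{eq:joinMom}, treating \eqref{eq:survie} first and then feeding it into \eqref{eq:AsNtE}. Throughout I write $\mu:=\lim_{t\to\infty}e^{\alpha t}F(t)$, as in the proof of Lemma \ref{lem:quadeR}.

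For \eqref{eq:survie}, I would start from the exact identities \eqref{eq:probaNoCond} and \eqref{eq:NtNoCond}, which combine to give
\[
\mathbb{P}\left(N_{t}>0\right)=\frac{W(t)-W\star\mathbb{P}_{V}(t)}{W(t)}=\frac{\mathbb{E}N_{t}}{W(t)},
\]
so that $1/\mathbb{P}(N_{t}>0)=W(t)/\mathbb{E}N_{t}$. By Proposition \ref{lem:WComp} one has $W(t)=\frac{e^{\alpha t}}{\psi'(\alpha)}-\mu+o(1)$, while the computation of $\int_{[0,t]}W(t-u)\,\mathbb{P}_{V}(du)$ already carried out in the proof of Lemma \ref{lem:quadeR} yields, after the constant terms $-\mu$ cancel, $\mathbb{E}N_{t}=\frac{\alpha e^{\alpha t}}{b\psi'(\alpha)}+o(1)$. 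I would then factor $\frac{e^{\alpha t}}{\psi'(\alpha)}$ out of the numerator and $\frac{\alpha e^{\alpha t}}{b\psi'(\alpha)}$ out of the denominator to obtain
\[
\frac{1}{\mathbb{P}\left(N_{t}>0\right)}=\frac{b}{\alpha}\cdot\frac{1-\mu\psi'(\alpha)e^{-\alpha t}+o(e^{-\alpha t})}{1+o(e^{-\alpha t})},
\]
and a first-order expansion of $1/(1+x)$ produces \eqref{eq:survie}.

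For \eqref{eq:AsNtE}, I would use that $N_{t}\mathcal{E}$ vanishes on $\{N_{t}=0\}$, so conditioning gives $\mathbb{E}_{t}N_{t}\mathcal{E}=\mathbb{E}[N_{t}\mathcal{E}]/\mathbb{P}(N_{t}>0)$. Substituting the expansion \eqref{eq:joinMom} for $\mathbb{E}[N_{t}\mathcal{E}]$ and the expansion \eqref{eq:survie} just obtained for $1/\mathbb{P}(N_{t}>0)$, I would multiply out and retain only the contributions of order $e^{\alpha t}$, $1$ and $o(1)$. The leading $e^{\alpha t}$ term of $\mathbb{E}[N_{t}\mathcal{E}]$ against $\frac{b}{\alpha}$ gives $\frac{2e^{\alpha t}}{\psi'(\alpha)}$; the same leading term against the $e^{-\alpha t}$ correction of $1/\mathbb{P}(N_{t}>0)$ contributes the constant $-2\mu$; and the constant term of $\mathbb{E}[N_{t}\mathcal{E}]$ against $\frac{b}{\alpha}$ gives $-\frac{1}{\psi'(\alpha)}-\mu$. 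Summing the constants yields $-\frac{1}{\psi'(\alpha)}-3\mu$, which is the asserted \eqref{eq:AsNtE}.

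The only delicate point is the bookkeeping of error terms. One must keep the $-\mu$ constant in the expansion of $W(t)$, since this is precisely what survives (after the cancellation producing $\mathbb{E}N_{t}$) to generate the $e^{-\alpha t}$ correction in \eqref{eq:survie}; and one must check that this $e^{-\alpha t}$ correction interacts with the $e^{\alpha t}$ leading term of $\mathbb{E}[N_{t}\mathcal{E}]$ to give an $O(1)$ contribution while all genuinely smaller cross-terms are absorbed into $o(1)$. This step is elementary but is exactly where the coefficient $3\mu$ arises, so I would carry it out explicitly rather than leave it to the reader.
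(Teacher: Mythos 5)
Your proposal is correct and follows exactly the route the paper intends: the paper's proof is only the remark that the corollary follows ``using \eqref{eq:probaNoCond} and the method above,'' i.e.\ expanding $W(t)$ and $W\star\mathbb{P}_{V}(t)$ via Proposition \ref{lem:WComp} as in Lemma \ref{lem:quadeR}, and then dividing the expansion \eqref{eq:joinMom} by $\mathbb{P}(N_{t}>0)$. Your bookkeeping (the cancellation of the $-\mu$ terms in $\mathbb{E}N_{t}$, the $-2\mu$ cross-term, and the constant $-\tfrac{1}{\psi'(\alpha)}-\mu$, summing to $-\tfrac{1}{\psi'(\alpha)}-3\mu$) is accurate, so you have simply made explicit what the paper leaves to the reader.
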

Our last estimate is the boundedness of the third moments.
\begin{lem}[Boundedness of the third moment]
\label{lem:NtBound}
The third moment of the error is asymptotically bounded, that is
\[
\mathbb{E}\left[\left|e^{-\frac{\alpha}{2} t}\left(\psi'(\alpha)N_{t}-e^{\alpha t}\mathcal{E}\right) \right|^{3} \right]=\mathcal{O}\left(1 \right).
\]
\end{lem}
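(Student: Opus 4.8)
The plan is to obtain the third-moment bound from a fourth-moment estimate by interpolation. Write $R_{t}:=\psi'(\alpha)N_{t}-e^{\alpha t}\mathcal{E}$ for the error. By the Cauchy--Schwarz inequality,
\[
\mathbb{E}\left[|R_{t}|^{3}\right]=\mathbb{E}\left[|R_{t}|\,R_{t}^{2}\right]\leq\left(\mathbb{E}\left[R_{t}^{2}\right]\right)^{1/2}\left(\mathbb{E}\left[R_{t}^{4}\right]\right)^{1/2}.
\]
Lemma \ref{lem:quadeR} gives $\mathbb{E}[R_{t}^{2}]=\mathcal{O}(e^{\alpha t})$, so it suffices to establish the fourth-moment bound $\mathbb{E}[R_{t}^{4}]=\mathcal{O}(e^{2\alpha t})$; the claim then follows after multiplication by $e^{-3\alpha t/2}$. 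Since $\mathbb{P}(N_{t}>0)\to\alpha/b>0$ by \eqref{eq:Prnonex}, passing between $\mathbb{P}$, $\mathbb{P}_{t}$ and $\mathbb{P}_{\infty}$ changes the moments only by a factor bounded away from $0$ and $\infty$, so I would argue throughout under $\mathbb{P}$.

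Everything thus reduces to $\mathbb{E}[R_{t}^{4}]=\mathcal{O}(e^{2\alpha t})$. Expanding the fourth power,
\[
\mathbb{E}\left[R_{t}^{4}\right]=\sum_{i=0}^{4}\binom{4}{i}\psi'(\alpha)^{i}\left(-e^{\alpha t}\right)^{4-i}\mathbb{E}\left[N_{t}^{i}\mathcal{E}^{4-i}\right],
\]
so I would compute each mixed moment $\mathbb{E}[N_{t}^{i}\mathcal{E}^{4-i}]$ and its asymptotics exactly as $\mathbb{E}[N_{t}\mathcal{E}]$ was computed in Lemma \ref{lem:joinMom}. For $s>t$ I would use the branching decomposition \eqref{eq:decomposition}, together with an extension of Lemma \ref{lem:stocInt} to third- and fourth-order products of Poisson integrals, to derive a closed system of renewal equations for the functions $t\mapsto\mathbb{E}[N_{t}^{i}N_{s}^{j}]$, solve them by Laplace transform, and finally let $s\to\infty$ after dividing by $W(s)^{4-i}$ — the exchange of limit and integral being justified, as in Lemma \ref{lem:joinMom}, by higher-order $L^{p}$ control of $N_{s}/W(s)$ and the estimate $W(t)=\mathcal{O}(e^{\alpha t})$. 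The precise scale-function expansion of Proposition \ref{lem:WComp}, namely $W(t)=e^{\alpha t}/\psi'(\alpha)-e^{\alpha t}F(t)$ with $e^{\alpha t}F(t)\to\mu$, is what lets me expand each mixed moment up to and including the order $e^{3\alpha t}$.

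The decisive point, and the main obstacle, is that each summand above is a priori of size $e^{4\alpha t}$, so the target $\mathcal{O}(e^{2\alpha t})$ forces the two top orders, $e^{4\alpha t}$ and $e^{3\alpha t}$, to cancel in the alternating sum. The cancellation at order $e^{4\alpha t}$ is the one already reflected by the almost sure (and $L^{2}$) convergence $e^{-\alpha t}\psi'(\alpha)N_{t}\to\mathcal{E}$ of Theorem \ref{thm:ASCVNt2}, whereas the cancellation at order $e^{3\alpha t}$ is the genuinely new computation: it must emerge from the $\mu$-corrections produced by Proposition \ref{lem:WComp} in all five mixed moments at once, precisely as the first-order cancellation arose in the proof of Lemma \ref{lem:quadeR} for the quadratic error. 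Carrying out this double cancellation is the technically delicate step and carries the bulk of the (routine but lengthy) Laplace-transform bookkeeping.

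An alternative that sidesteps the simultaneous computation of all five mixed moments is to exploit a self-similar renewal structure for the error itself. Letting $s\to\infty$ in \eqref{eq:decomposition} yields $\mathcal{E}=\int_{[0,\infty)}e^{-\alpha u}\mathcal{E}^{(\xi_{u})}\mathds{1}_{V_{\emptyset}>u}\,\xi(du)$, with $(\mathcal{E}^{(i)})_{i\geq1}$ i.i.d.\ copies of $\mathcal{E}$ attached to the same subtrees, and hence
\[
R_{t}=\int_{[0,t]}R^{(\xi_{u})}_{t-u}\mathds{1}_{V_{\emptyset}>u}\,\xi(du)+\psi'(\alpha)\mathds{1}_{V_{\emptyset}>t}-\int_{(t,\infty)}e^{-\alpha(u-t)}\mathcal{E}^{(\xi_{u})}\mathds{1}_{V_{\emptyset}>u}\,\xi(du),
\]
where $R^{(i)}_{t-u}:=\psi'(\alpha)N^{(i)}_{t-u}-e^{\alpha(t-u)}\mathcal{E}^{(i)}$ are i.i.d.\ copies of the error and both boundary terms are $\mathcal{O}(1)$ in every $L^{p}$. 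This recasts the estimate as a single renewal inequality for $t\mapsto\mathbb{E}[R_{t}^{4}]$, driven by the already-controlled lower moments $\mathbb{E}[R_{t}^{k}]$ with $k\leq3$, which can then be closed by a renewal or Gronwall argument. Either route hinges on the same quantitative cancellation of the two leading orders.
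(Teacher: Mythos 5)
Your opening reduction is sound: by Cauchy--Schwarz and Lemma \ref{lem:quadeR}, the claim follows from $\mathbb{E}[R_{t}^{4}]=\mathcal{O}(e^{2\alpha t})$, and that fourth-moment bound is in fact true. The gap is that neither of your routes to it is carried out, and your main route (a) cannot be completed with the estimates available in the paper. Each mixed moment $\mathbb{E}[N_{t}^{i}\mathcal{E}^{4-i}]$ is of order $e^{4\alpha t}$, so the target $\mathcal{O}(e^{2\alpha t})$ requires resolving each of them with \emph{relative} error $\mathcal{O}(e^{-2\alpha t})$. But the scale-function expansion you invoke (Proposition \ref{lem:WComp}) gives only $W(t)=e^{\alpha t}/\psi'(\alpha)-\mu+o(1)$ with no rate on the $o(1)$ (Lemma \ref{lem: asyComp} gives $\mathcal{O}(e^{-\gamma t})$ for an unspecified $\gamma>0$, possibly smaller than $\alpha$). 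An unquantified $o(1)$ error in $W$ propagates through any fourth-degree expression in $W$ and its convolutions to an $o(e^{3\alpha t})$ error in the mixed moments --- one full order short. So even granting the cancellation of the $e^{4\alpha t}$ and $e^{3\alpha t}$ coefficients (which you assert ``must emerge'' but never verify, and which on this route \emph{is} the entire content of the lemma), the best controllable remainder is $o(e^{3\alpha t})$, whence only $e^{-\frac{3\alpha}{2}t}\mathbb{E}|R_{t}|^{3}=o(e^{\frac{\alpha}{2}t})$ --- not boundedness. Route (b) is circular as stated: you say the renewal inequality for $\mathbb{E}[R_{t}^{4}]$ is ``driven by the already-controlled lower moments $\mathbb{E}[R_{t}^{k}]$, $k\leq 3$'', but $k=3$ is precisely what is being proven. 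It can be repaired --- bound the triple--single cross terms by $\mathbb{E}|R|^{3}\leq(\mathbb{E}R^{2})^{1/2}(\mathbb{E}R^{4})^{1/2}$ so the inequality becomes sublinearly self-referential, note that the renormalized kernel $be^{-2\alpha u}\mathbb{P}(V>u)\,du$ is defective since $\int be^{-\alpha u}\mathbb{P}(V>u)\,du=1$, and close by a supremum argument --- but none of that work appears in the proposal, and it is exactly where the mathematics lives.

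For comparison, the paper proves the lemma by a structural device you do not use at all: the process $N_{t}^{\infty}$ of individuals alive at $t$ with infinite descent, which is a Yule process under $\mathbb{P}_{\infty}$. Splitting $R_{t}=(\psi'(\alpha)N_{t}-N_{t}^{\infty})+(N_{t}^{\infty}-e^{\alpha t}\mathcal{E})$, the first piece is a geometric sum of asymptotically centered Bernoulli variables via $N_{t}^{\infty}=\sum_{i=1}^{N_{t}}B_{i}^{(t)}$ with $\mathbb{E}B_{2}^{(t)}=\psi'(\alpha)+\mathcal{O}(e^{-\alpha t})$, and the second is handled by writing the Yule process as a time-changed Poisson process and using the explicit formula $\mathbb{E}[(X-\nu)^{4}]=3\nu^{2}+\nu$. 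Both pieces have fourth moments $\mathcal{O}(e^{2\alpha t})$ by elementary computations, with no two-order cancellation and no fine asymptotics of $W$ needed. If you want a proof in your spirit, the repaired route (b) --- which is essentially the mechanism of the paper's Lemma \ref{lem:genBounded}, made self-consistent instead of relying on the present lemma as input --- is the viable one; route (a) should be abandoned.
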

\begin{proof}
We define for all $t\geq0$, $N^{\infty}_{t}$ as the number of individuals alive at time $t$ which have an infinite descent. According to Proposition 6.1 of \cite{CH}, $N^{\infty}$ is a Yule process under $\mathbb{P}_{\infty}$.

We have
\begin{align*}
\mathbb{E}\left[\left|\frac{\psi'(\alpha)N_{t}-e^{\alpha t}\mathcal{E}}{e^{\frac{\alpha}{2} t}} \right|^{3} \right]\leq 8\mathbb{E}\left[\left|\frac{\psi'(\alpha)N_{t}-N^{\infty}_{t}}{e^{\frac{\alpha}{2} t}} \right|^{3} \right]+8\mathbb{E}\left[\left|\frac{N^{\infty}_{t}-e^{\alpha t}\mathcal{E}}{e^{\frac{\alpha}{2} t}} \right|^{3} \right].
\end{align*}
Now, we know according to the proof of Theorem 6.2 of \cite{CH} (and this is easy to prove using the decomposition of Figure \ref{fig:subtrees}) that $N^{\infty}$ can be decomposed as
\[
N^{\infty}_{t}=\sum_{i=1}^{N_{t}}B_{i}^{(t)},
\]
where $\left(B_{i}^{(t)}\right)_{i\geq 1}$ is a family of independent Bernoulli random variables, which is i.i.d.\ for $i\geq 2$, under $\mathbb{P}_{t}$.
Hence,
\begin{align*}
\mathbb{E}_{t}\left[\left|\frac{\psi'(\alpha)N_{t}-N^{\infty}_{t}}{e^{\frac{\alpha}{2} t}} \right|^{3} \right]&\leq e^{-\frac{3}{2}\alpha t}\mathbb{E}_{t}\left[\left(\sum_{i=1}^{N_{t}}\left(\psi'(\alpha)-B_{i}^{(t)}  \right)\right)^{4}\right]^{\frac{3}{4}}.\\
\end{align*}
Since, it is known from the proof of Theorem 6.2 of \cite{CH} that
\[
\mathbb{E}B_{2}^{(t)}=\psi'(\alpha)+\mathcal{O}\left(e^{-\alpha t}\right),
\]
it is straightforward that 
\[
\mathbb{E}_{t}\left[\left|\frac{\psi'(\alpha)N_{t}-N^{\infty}_{t}}{e^{\frac{\alpha}{2} t}} \right|^{3} \right]
\]
is bounded.

On the other hand, we know that a Yule process is a time-changed Poisson process (see for instance \cite{AN}, Theorem III.11.2), that is, if $P_{t}$ is a Poisson process independent of $\mathcal{E}$ under $\mathbb{P}_{\infty}$,
\[
\mathbb{E}\left[\left|\frac{N^{\infty}_{t}-e^{\alpha t}\mathcal{E}}{e^{\frac{\alpha}{2} t}} \right|^{3}\right]=\mathbb{E}_{\infty}\left[\left|\frac{P_{\mathcal{E}\left(e^{\alpha t}-1\right)}-e^{\alpha t}\mathcal{E}}{e^{\frac{\alpha}{2} t}} \right|^{3}\right]\mathbb{P}(\text{NonEx}).
\]
Now, using Hlder inequality, it remains to bound
\[
 \mathbb{E}_{\infty}\left[\left(\frac{P_{\mathcal{E}\left(e^{\alpha t}-1\right)}-e^{\alpha t}\mathcal{E}}{e^{\frac{\alpha}{2} t}} \right)^{4}\right]=e^{-2\alpha t}\int_{\mathbb{R}_{+}}\mathbb{E}_{\infty}\left[\left(P_{x\left(e^{\alpha t}-1\right)}-e^{\alpha t}x \right)^{4}\right]e^{-x}dx.
\]
Finally, for a Poissonian random variable $X$ with parameter $\nu$, straightforward computations give that
$
\mathbb{E}\left[\left(X-\nu\right)^{4}\right]=3\nu^{2}+\nu,
$
which allows us to end the proof.
\end{proof}
\subsubsection{Case with arbitrary initial distribution $\mathbb{P}_{V_{\emptyset}}$}
\label{ssec:Ntarbi}
In order to study the behavior of the sub-splitting trees involved in the decomposition described in Figure \ref{fig:subtrees}, we investigate the behaviour of a splitting tree where the ancestor lifelength is not distributed as $V$, but follows an arbitrary distribution. Let $\Xi$ be a random variable in $(0,\infty]$, giving to the life-length of the ancestor and by $N(\Xi)$ the associated population counting process.

Using the decomposition of $N(\Xi)$ over the lifespan of the ancestor, as described in Section \ref{sec:strategy}, we have
\begin{equation}
\label{eq:ancBran}
N_{t}(\Xi)=\int_{\mathbb{R}_{+}}N^{(\xi_{u})}_{t-u}\mathds{1}_{\Xi>u}\ \xi(du)+\mathds{1}_{\Xi>t},
\end{equation}
where $\left(N^{i}\right)_{i\geq 1}$ is a family of i.i.d.\ CMJ processes with the same law as $N$ independent of $\Xi$ and $\xi$, as described in section \ref{sec:strategy}.
Let, for all $i\geq1$, $\mathcal{E}_{i}$ be
\begin{equation}
\label{eq:asCV2}
\mathcal{E}_{i}:=\lim\limits_{t\to\infty}\psi'(\alpha)e^{-\alpha t}N^{i}_{t}, \quad a.s,
\end{equation}
and, let $\mathcal{E}\left(\Xi\right)$  be the random variable defined by
\begin{equation}
\label{eq:ancBranlim}
\mathcal{E}\left(\Xi\right):=\int_{[0,\infty]}\mathcal{E}_{(\xi_{u})}e^{-\alpha u} \mathds{1}_{\Xi>u}\ \xi(du).
\end{equation}
\begin{lem}[First moment]
The first moment is asymptotically bounded, that is
\label{lem:firstMom}
\[
\mathbb{E}\left(\psi'(\alpha)N_{t}(\Xi)-e^{\alpha t}\mathcal{E}(\Xi)\right)=\mathcal{O}(1),
\]
uniformly with respect to the random variable $\Xi$.
\end{lem}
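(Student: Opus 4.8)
The plan is to reduce the claim to a statement about the \emph{standard} tree (root lifetime $V$) by exploiting the additive structure of both $N_t(\Xi)$ and $\mathcal{E}(\Xi)$. Subtracting $e^{\alpha t}$ times \eqref{eq:ancBranlim} from $\psi'(\alpha)$ times \eqref{eq:ancBran}, the integrand over $[0,t]$ becomes, atom by atom, the error of the subtree grafted at time $u$, namely $\psi'(\alpha)N^{(\xi_u)}_{t-u}-e^{\alpha(t-u)}\mathcal{E}_{(\xi_u)}$; what remains is the boundary term $\psi'(\alpha)\mathds{1}_{\Xi>t}$ and the tail $e^{\alpha t}\int_{(t,\infty]}\mathcal{E}_{(\xi_u)}e^{-\alpha u}\mathds{1}_{\Xi>u}\,\xi(du)$. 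Taking expectations with the first formula of Lemma \ref{lem:stocInt} and using $\mathbb{E}\mathcal{E}=\alpha/b$ (from Theorem \ref{thm:ASCVNt2} and the $L^{2}$ convergence), I would obtain
\[
\mathbb{E}\left(\psi'(\alpha)N_t(\Xi)-e^{\alpha t}\mathcal{E}(\Xi)\right)=\int_0^t b\,\mathbb{P}(\Xi>u)\,D(t-u)\,du+\psi'(\alpha)\mathbb{P}(\Xi>t)-\alpha e^{\alpha t}\int_t^\infty \mathbb{P}(\Xi>u)e^{-\alpha u}\,du,
\]
where $D(s):=\mathbb{E}\left(\psi'(\alpha)N_s-e^{\alpha s}\mathcal{E}\right)$ is the first-moment error of the standard tree.

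The last two terms are harmless: since $\mathbb{P}(\Xi>t)\le 1$ the middle term is bounded by $\psi'(\alpha)$, and $\mathbb{P}(\Xi>u)\le 1$ gives $\alpha e^{\alpha t}\int_t^\infty e^{-\alpha u}\,du=1$ for the tail, both uniformly in $\Xi$. The whole difficulty is therefore the convolution term, and the key observation is that $u\mapsto\mathbb{P}(\Xi>u)$ is a non-increasing $[0,1]$-valued survival function, hence a mixture of indicators $\mathds{1}_{[0,c)}$; consequently
\[
\left|\int_0^t b\,\mathbb{P}(\Xi>u)\,D(t-u)\,du\right|\le b\sup_{0\le a\le t}\left|\int_a^t D(s)\,ds\right|,
\]
which removes all dependence on $\Xi$. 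Thus it suffices to prove that the partial integrals of $D$ are bounded, i.e.\ $\sup_{a\le b}\left|\int_a^b D\right|<\infty$; uniformity in $\Xi$ then comes for free.

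To analyse $D$, I would use \eqref{eq:NtNoCond} together with Proposition \ref{lem:WComp}, writing $W(s)=\frac{e^{\alpha s}}{\psi'(\alpha)}-e^{\alpha s}F(s)$ and $G(s):=e^{\alpha s}F(s)\to\mu$. A short computation using \eqref{eq:intalpha} cancels the leading $e^{\alpha s}$ growth and yields
\[
D(s)=\psi'(\alpha)\left(\int_{[0,s]}G(s-u)\,\mathbb{P}_V(du)-G(s)\right)+e^{\alpha s}\int_{(s,\infty)}e^{-\alpha u}\,\mathbb{P}_V(du).
\]
The second term is non-negative and, by Fubini, integrable with $\int_0^\infty(\cdots)\,ds\le 1/\alpha$. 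Integrating the first term and setting $H(x):=\int_0^x(G-\mu)$, its partial integrals split into a piece governed by $\int_{[0,s]}(s-u)\mathbb{P}_V(du)-s$ (bounded once $\mathbb{E}V<\infty$, with limit $-\mu\mathbb{E}V$) and the difference $\int_{[0,s]}H(s-u)\mathbb{P}_V(du)-H(s)$, which is bounded as soon as $H$ is bounded.

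The main obstacle is precisely the boundedness of $H$, that is, the \emph{rate} at which $e^{\alpha s}F(s)\to\mu$: Lemma \ref{lem: asyComp} and Proposition \ref{lem:WComp} give the first-order behaviour of $W$ but not a summable remainder. I would therefore go one order further, establishing $W(s)=\frac{e^{\alpha s}}{\psi'(\alpha)}+\frac{1}{\psi'(0+)}+o(1)$ with an $L^{1}$ error, equivalently $e^{\alpha s}F(s)-\mu\in L^{1}$. This should follow from the same L\'evy/renewal machinery underlying Proposition \ref{lem:WComp}: the potential-measure representation there shows that $F$ solves a defective renewal equation whose exponential tilt is proper, and applying the key renewal theorem to this tilted equation yields not merely the limit $\mu$ but an integrable remainder, at the cost of the relevant finite moment of the tilted law (so a mild integrability condition on $V$ enters here). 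Granting this refined asymptotic, $H$ is bounded, the partial integrals of $D$ are controlled, and the reduction of the first two paragraphs closes the proof uniformly in $\Xi$.
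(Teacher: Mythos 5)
Your master identity and your reduction are correct, and in fact more careful than the paper's own argument: the kernel produced by Lemma \ref{lem:stocInt} is $b\,\mathbb{P}(\Xi>u)\,du$, exactly as you derived, together with the two harmless boundary terms. The paper's display \eqref{eq:ptiteeq} carries an extra factor $e^{-\alpha u}$ in the kernel (it mixes the scaled and unscaled forms of the decomposition \eqref{eq:ancBran}--\eqref{eq:ancBranlim}), and it is only thanks to that spurious factor that the mere boundedness of $I_s=D(s)$ suffices there, via $\int_0^te^{-\alpha u}b\,du\leq b/\alpha$. With the correct kernel, taking $\Xi\equiv\infty$ shows boundedness of $D$ cannot suffice, and your mixture-of-indicators step rightly reduces the whole lemma to $\sup_{0\leq a\leq t}\bigl|\int_a^tD(s)\,ds\bigr|<\infty$.

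The genuine gap is in how you close this. You reduce further to the boundedness of $H(x)=\int_0^x\bigl(e^{\alpha s}F(s)-\mu\bigr)ds$ and assert that an integrable-remainder refinement of Proposition \ref{lem:WComp} ``should follow from the same machinery''. It does not: remainder estimates in the key renewal theorem require strictly stronger hypotheses than the theorem itself (essentially a finite second moment for the tilted measure $e^{\alpha r}\Upsilon(dr)$ appearing in the proof of Proposition \ref{lem:WComp}, i.e.\ a condition of the type $\mathbb{E}V^{2}<\infty$), which the paper never assumes. Worse, when $\mathbb{E}V<\infty$ but $\mathbb{E}V^{2}=\infty$, the deviation of the renewal measure of that tilted law from its linear asymptote tends to infinity, and $H$ inherits this divergence; so your sufficient condition is not just unproven, it can fail while the lemma remains true. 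The step closes without any extra hypothesis if one integrates $D$ exactly instead of expanding it: \eqref{eq:NtNoCond}, \eqref{eq:laplacepv} and \eqref{eq:laplacescale} give
\[
T_{\mathcal{L}}\mathbb{E}N_{\cdot}(\lambda)=\frac{\lambda-\psi(\lambda)}{b\psi(\lambda)}=\frac{1}{b}\left(\lambda T_{\mathcal{L}}W(\lambda)-1\right),
\]
whence $b\int_0^t\mathbb{E}N_s\,ds=W(t)-1$ for all $t$ (uniqueness of Laplace transforms plus continuity of both sides), and therefore, using $\mathbb{E}\mathcal{E}=\alpha/b$ and Proposition \ref{lem:WComp},
\[
\int_a^tD(s)\,ds=\frac{\psi'(\alpha)\left(W(t)-W(a)\right)-\left(e^{\alpha t}-e^{\alpha a}\right)}{b}=\frac{\psi'(\alpha)}{b}\left(e^{\alpha a}F(a)-e^{\alpha t}F(t)\right).
\]
Since $s\mapsto e^{\alpha s}F(s)$ is nonnegative, c\`adl\`ag and convergent, it is bounded, so the right-hand side is bounded uniformly in $a\leq t$ with no rate information whatsoever. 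Substituting this bound into your first two paragraphs yields the lemma, uniformly in $\Xi$, under the paper's standing assumptions only.
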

\begin{proof}
Using Lemma \ref{lem:stocInt}, \eqref{eq:ancBran} and \eqref{eq:ancBranlim} with have
\begin{equation*}
\mathbb{E}\left(\psi'(\alpha)N_{t}(\Xi)-e^{\alpha t}\mathcal{E}(\Xi)\right)=\int_{[0,t]}\left(\psi'(\alpha)\mathbb{E}N_{t-u}-e^{\alpha(t-u)}\mathbb{E}\mathcal{E}\right)e^{-\alpha u}\mathbb{P}\left( \Xi>u\right)bdu,
\end{equation*}
which leads using \eqref{eq:NtNoCond} and \eqref{eq:Prnonex} to
\begin{equation}
\label{eq:ptiteeq}
\mathbb{E}\left(\psi'(\alpha)N_{t}(\Xi)-e^{\alpha t}\mathcal{E}(\Xi)\right)=\int_{[0,t]}\underbrace{\left(\psi'(\alpha)W(t-u)-\psi'(\alpha)W\star\mathbb{P}_{V}(t-u)-\frac{\alpha}{b}e^{\alpha(t-u)}\right)}_{=:I_{t-u}}e^{-\alpha u}\mathbb{P}\left( \Xi>u\right)bdu.
\end{equation}
We get using Proposition \ref{lem:WComp} and \eqref{eq:intalpha},
\begin{align*}
\label{eq:truccc}
I_{s}=&e^{\alpha s}-\psi'(\alpha)e^{\alpha s}F(s)
-e^{\alpha s}\left(1-\frac{\alpha}{b} \right)\\&+\psi'(\alpha)\int_{[0,s]}e^{\alpha(s-v)}F(s-v)\mathbb{P}_{V}(dv)+e^{\alpha s}\int_{(s,\infty)}e^{-\alpha v}\mathbb{P}_{V}(dv)
-\frac{\alpha}{b}e^{\alpha s}\\
=&e^{\alpha s}\int_{(s,\infty)}e^{-\alpha v}\mathbb{P}_{V}(dv)+o(1).
\end{align*}
Hence, $\left(I_{s}\right)_{s\geq 0}$ is bounded. The result, now, follows from \eqref{eq:ptiteeq}.
\end{proof}
\begin{lem}[$L^{2}$ convergence in the general case]
\label{lem:genConv}
 $\psi'(\alpha)e^{-\alpha t}N_{t}(\Xi)$ converge a.s.\ and in $L^{2}$ to $\mathcal{E}\left(\Xi\right)$, and
\[
\lim\limits_{t\to\infty}e^{-\alpha t}\mathbb{E}\left(\psi'(\alpha)N_{t}(\Xi)-e^{\alpha t}\mathcal{E}(\Xi)\right)^{2}=\frac{\alpha}{b}\left(2-\psi'(\alpha)\right)\int_{ \mathbb{R}_{+}}e^{-\alpha s}\mathbb{P}\left(\Xi>s \right)bds,
\]
where the convergence is uniform with respect to $\Xi$ in $(0,\infty]$.
In the particular case when $\Xi$ follows the distribution of $O_{2}^{(\beta t)}$ given by \eqref{eq:loiOversh}, we have, for $0<\beta<\frac{1}{2}$,
\[
\lim\limits_{t\to\infty}e^{\alpha t}\mathbb{E}_{\beta t}\left(e^{-\alpha t}\psi'(\alpha)N_{t}(O^{(\beta t)}_{2})-\mathcal{E}(O^{(\beta t)}_{2})\right)^{2}=\left(2-\psi'(\alpha)\right)\psi'(\alpha).
\]
\end{lem}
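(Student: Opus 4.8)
The plan is to build everything on the branching decomposition \eqref{eq:ancBran} of $N_t(\Xi)$ over the births during the ancestor's life, together with its renormalised limit \eqref{eq:ancBranlim}. Writing $D^{(i)}_s:=\psi'(\alpha)N^{(i)}_s-e^{\alpha s}\mathcal{E}_i$ for the centred error of the $i$-th subtree and subtracting \eqref{eq:ancBranlim} from $\psi'(\alpha)$ times \eqref{eq:ancBran}, I would split
\begin{multline*}
\psi'(\alpha)N_t(\Xi)-e^{\alpha t}\mathcal{E}(\Xi)=\int_{[0,t]}D^{(\xi_u)}_{t-u}\mathds{1}_{\Xi>u}\ \xi(du)\\
-\int_{(t,\infty]}e^{\alpha(t-u)}\mathcal{E}_{(\xi_u)}\mathds{1}_{\Xi>u}\ \xi(du)+\psi'(\alpha)\mathds{1}_{\Xi>t},
\end{multline*}
into a main integral over births before $t$, a correction from births after $t$ (present in the limit but not in $N_t$), and a boundary term.

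The core computation is the second moment of this expression via the (signed extension of the) formula of Lemma \ref{lem:stocInt}. The main integral contributes a diagonal part $\int_0^t b\,\mathbb{E}[D_{t-u}^2]\,\mathbb{P}(\Xi>u)\,du$ and an off-diagonal part $\int_0^t\int_0^t b^2\,\mathbb{E}D_{t-u}\,\mathbb{E}D_{t-v}\,\mathbb{P}(\Xi>u,\Xi>v)\,du\,dv$. By Lemma \ref{lem:quadeR}, $\mathbb{E}[D_s^2]\sim\frac{\alpha}{b}(2-\psi'(\alpha))e^{\alpha s}$, so after dividing by $e^{\alpha t}$ the diagonal converges to $\alpha(2-\psi'(\alpha))\int_0^\infty e^{-\alpha u}\mathbb{P}(\Xi>u)\,du$, which is exactly the claimed constant. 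All other contributions are $o(e^{\alpha t})$: since $\mathbb{E}D_s$ is bounded (this is the quantity $I_s$ of Lemma \ref{lem:firstMom}), the off-diagonal term is $O(t^2)$; the post-$t$ correction has second moment $O(1)$ because $\int_t^\infty e^{\alpha(t-u)}\,du$ and $\int_t^\infty e^{2\alpha(t-u)}\,du$ are bounded and $\mathcal{E}$ has finite second moment; the boundary term is $O(1)$; and Cauchy--Schwarz bounds the cross terms by $O(e^{\alpha t/2})$. Crucially, every bound involves $\Xi$ only through $\mathbb{P}(\Xi>\cdot)\in[0,1]$, which gives the uniformity in $\Xi$. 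Dividing once more by $e^{\alpha t}$ yields the $L^2$ convergence of the first assertion; the a.s.\ convergence I would deduce by applying Theorem \ref{thm:ASCVNt} to each subtree in \eqref{eq:ancBran}, the number of relevant births in $[0,\Xi\wedge t]$ being a.s.\ finite when $\Xi<\infty$ and the case $\Xi=\infty$ reducing to the standard tree.

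For the special case, the uniformity in $\Xi$ lets me insert the $t$-dependent law $\Xi=O^{(\beta t)}_2$ into the estimate just proved, so that $e^{-\alpha t}\mathbb{E}_{\beta t}(\cdots)^2$ has the same limit as $\alpha(2-\psi'(\alpha))\int_0^\infty e^{-\alpha s}\mathbb{P}_{\beta t}(O^{(\beta t)}_2>s)\,ds$, the restriction $0<\beta<\tfrac12$ being what legitimises this composition of two $t\to\infty$ limits. From \eqref{eq:loiOversh}, $\mathbb{P}_{\beta t}(O^{(\beta t)}_2>s)=\frac{b}{W(\beta t)-1}\int_0^{\beta t}W(\beta t-y)\mathbb{P}(V>s+y)\,dy$, and Proposition \ref{lem:WComp} ($W(r)\sim e^{\alpha r}/\psi'(\alpha)$) gives the pointwise limit $g(s):=b\int_0^\infty e^{-\alpha y}\mathbb{P}(V>s+y)\,dy$, while the bound $W(r)=\mathcal{O}(e^{\alpha r})$ supplies an $e^{-\alpha s}$-integrable dominating function for dominated convergence. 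It then remains to evaluate $\alpha\int_0^\infty e^{-\alpha s}g(s)\,ds=\alpha b\int_0^\infty w\,e^{-\alpha w}\mathbb{P}(V>w)\,dw$ (Fubini, substitution $s+y=w$); integration by parts together with \eqref{eq:intalpha} and \eqref{eq:laplaceDerivative} collapses this to $\psi'(\alpha)$, so the limit equals $\alpha(2-\psi'(\alpha))\cdot\psi'(\alpha)/\alpha=(2-\psi'(\alpha))\psi'(\alpha)$.

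I expect the main obstacle to be precisely the legitimacy of feeding the $t$-dependent distribution $O^{(\beta t)}_2$ into the fixed-$\Xi$ limit: this needs the second-moment convergence to be genuinely uniform over admissible $\Xi$ (which is why I insist that every error bound depend on $\Xi$ only through $\mathbb{P}(\Xi>\cdot)$) and, simultaneously, the dominated-convergence analysis of $\int_0^\infty e^{-\alpha s}\mathbb{P}_{\beta t}(O^{(\beta t)}_2>s)\,ds$, with $\beta<\tfrac12$ reconciling the two limits. A secondary technical point is extending the second-moment identity of Lemma \ref{lem:stocInt} from non-negative integrands to the signed error process $D$, which I would handle by bilinearity after decomposing $D$ into its positive and negative parts.
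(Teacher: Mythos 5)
Your proposal is correct and follows essentially the same route as the paper's proof: decompose the error via \eqref{eq:ancBran}--\eqref{eq:ancBranlim}, compute the second moment with (a signed extension of) Lemma \ref{lem:stocInt}, extract the limit from the diagonal term via Lemma \ref{lem:quadeR} while the off-diagonal, post-$t$ and boundary contributions are killed using Lemma \ref{lem:firstMom}, with uniformity in $\Xi$ coming from bounding $\mathbb{P}\left(\Xi>\cdot\right)$ by $1$, and the special case handled through the limit of \eqref{eq:loiOversh} under Lemma \ref{lem: asyComp} plus dominated convergence. Your explicit isolation of the births after time $t$, the bilinearity fix for applying Lemma \ref{lem:stocInt} to signed integrands, and the verification that $\alpha b\int_{0}^{\infty}we^{-\alpha w}\mathbb{P}\left(V>w\right)dw=\psi'(\alpha)$ are details the paper leaves implicit, but the argument is the same.
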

\begin{proof}
From \eqref{eq:ancBran} and \eqref{eq:ancBranlim}, we have
\begin{equation}
\label{eq:ancExp}
\left(e^{-\alpha t}\psi'(\alpha)N_{t}(\Xi)-\mathcal{E}(\Xi)\right)^{2}=\left[\int_{\mathbb{R}_{+}}\left(e^{-\alpha (t-u)}\psi'(\alpha)N^{(\xi_{u})}_{t-u}-\mathcal{E}_{(u)}\right)e^{-\alpha u}\mathds{1}_{\Xi>u}\ \xi(du)+e^{-\alpha t}\mathds{1}_{\Xi>t}\right]^{2}
\end{equation}
and, using Lemma \ref{lem:stocInt},
\begin{align*}
&\mathbb{E}\left(\psi'(\alpha)e^{-\alpha t}N_{t}(\Xi)-\mathcal{E}(\Xi)\right)^{2}\\
=&\mathbb{E}\left(\int_{\mathbb{R}_{+}}\left(\psi'(\alpha)e^{-\alpha (t-u)}N^{(\xi_{u})}_{t-u}-\mathcal{E}_{(u)}\right)e^{-\alpha u}\mathds{1}_{\Xi>u}\ \xi(du)\right)^{2}\\&+e^{-2\alpha t}\mathbb{P}\left(\Xi>t\right)+2e^{-\alpha t}\mathbb{E}\mathds{1}_{\Xi>t}\int_{\mathbb{R}_{+}}\left(\psi'(\alpha)e^{-\alpha (t-u)}N^{(\xi_{u})}_{t-u}-\mathcal{E}_{(u)}\right)e^{-\alpha u}\mathds{1}_{\Xi>u}\ \xi(du),\\
=&\int_{\mathbb{R}_{+}}\mathbb{E}\left[\left(\psi'(\alpha)e^{-\alpha (t-u)}N^{(\xi_{u})}_{t-u}-\mathcal{E}_{(u)}\right)^{2}\right]e^{-2\alpha u}\mathbb{P}\left(\Xi>u\right)\ bdu\\
&+\int_{\mathbb{R}_{+}}\mathbb{E}\left(\psi'(\alpha)e^{-\alpha (t-u)}N^{(\xi_{u})}_{t-u}-\mathcal{E}_{(u)}\right)\mathbb{E}\left(\psi'(\alpha)e^{-\alpha (t-v)}N^{(\xi_{v})}_{t-v}-\mathcal{E}_{(v)}\right)e^{-\alpha (u+v)}\mathbb{P}\left(\Xi>u,\Xi>v\right)\ bdu\ dv\\
&+e^{-2\alpha t}\mathbb{P}\left(\Xi>t\right)+2e^{-\alpha t}\int_{\mathbb{R}_{+}}\mathbb{E}\left(\psi'(\alpha)e^{-\alpha (t-u)}N^{(\xi_{u})}_{t-u}-\mathcal{E}_{(u)}\right)e^{-\alpha u}\mathbb{P}\left(\Xi>u,\Xi>t\right)\ bdu.
\end{align*}
Moreover, since,
\[
\psi'(\alpha)\mathbb{E}e^{-\alpha t}N_{t}-\mathcal{E}=\mathcal{O}\left(e^{-\alpha t}\right),
\]
this leads, using Lemma \ref{lem:firstMom}, to
\[
\lim\limits_{t\to\infty}e^{\alpha t}\mathbb{E}\left(e^{-\alpha t}\psi'(\alpha)N_{t}(\Xi)-\mathcal{E}(\Xi)\right)^{2}=\frac{\alpha}{b}\left(2-\psi'(\alpha)\right)\int_{ \mathbb{R}_{+}}e^{-\alpha u}\mathbb{P}\left(\Xi>u \right)bdu.
\]
Now, we have from \eqref{eq:loiOversh} and Lemma \ref{lem: asyComp},
\[
\lim\limits_{u\to\infty}\mathbb{P}_{u}\left(O_{2}>s \right)=\lim\limits_{u\to\infty}\int_{\mathbb{R}_{+}}\frac{W(u-y)}{W(u)-1}\mathbb{P}\left(V>s+y \right)bdy=\int_{\mathbb{R}_{+}}e^{-\alpha y}\mathbb{P}\left(V>s+y \right)bdy.
\]
It follows then from Lebesgue theorem that,
\[
\lim\limits_{t\to\infty}\int_{ \mathbb{R}_{+}}e^{-\alpha s}\mathbb{P}_{\beta t}\left(O_{2}>s \right)bds=\frac{b\psi'(\alpha)}{\alpha}.\qedhere
\]
\end{proof}

\begin{lem}[Boundedness in the general case.]
\label{lem:genBounded}
The error of order $3$ in asymptotically bounded, that is
\[
e^{-\frac{3}{2}\alpha t}\mathbb{E}\left|\psi'(\alpha)N_{t}(\Xi)-e^{\alpha t}\mathcal{E}(\Xi)\right|^{3}=\mathcal{O}\left(1\right),
\]
 uniformly w.r.t. $\Xi$.
\end{lem}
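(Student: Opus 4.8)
The plan is to read off from the branching decompositions \eqref{eq:ancBran} and \eqref{eq:ancBranlim} that both $N_t(\Xi)$ and $\mathcal{E}(\Xi)$ are integrals, against the \emph{same} Poisson measure $\xi$, of the i.i.d.\ standard objects $N^{(i)}$ and their limits $\mathcal{E}_i$. Writing $Z^{(i)}_s:=\psi'(\alpha)N^{(i)}_s-e^{\alpha s}\mathcal{E}_i$ for the error of the $i$th standard tree, with the convention $N^{(i)}_s=0$ for $s<0$, these two identities combine into
\[
\psi'(\alpha)N_t(\Xi)-e^{\alpha t}\mathcal{E}(\Xi)=\int_{[0,\infty]}Z^{(\xi_u)}_{t-u}\mathds{1}_{\Xi>u}\ \xi(du)+\psi'(\alpha)\mathds{1}_{\Xi>t}.
\]
The boundary term $\psi'(\alpha)\mathds{1}_{\Xi>t}$ is bounded, hence negligible after renormalisation by $e^{-\frac{3}{2}\alpha t}$, so the whole problem reduces to bounding the third absolute moment of the Poisson integral. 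The structural fact I would rely on is that, conditionally on $\xi$ and $\Xi$, the marks $Z^{(\xi_u)}_{t-u}$ attached to distinct atoms of $\xi$ are independent; this is exactly the independence already exploited to get the second moment in Lemma \ref{lem:genConv}.

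Next I would center the marks. Set $m_s:=\mathbb{E}Z_s$; the computation of $I_s$ in the proof of Lemma \ref{lem:firstMom} gives $m_s=e^{\alpha s}\int_{(s,\infty)}e^{-\alpha v}\mathbb{P}_V(dv)+o(1)$ for $s\ge0$, which is bounded, while $m_s=-\frac{\alpha}{b}e^{\alpha s}$ is bounded for $s<0$. Splitting $Z^{(i)}_s=\bar Z^{(i)}_s+m_s$ with $\bar Z^{(i)}_s$ centered separates the integral into a centered part $M_t(\Xi):=\int \bar Z^{(\xi_u)}_{t-u}\mathds{1}_{\Xi>u}\,\xi(du)$ and a drift $\int m_{t-u}\mathds{1}_{\Xi>u}\,\xi(du)$. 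Since $m$ is bounded, the drift is a Poisson integral of a bounded integrand, so its $L^3$ norm is $\mathcal{O}(t)=o(e^{\frac{\alpha}{2}t})$ and it is negligible as well.

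The heart of the argument is then the third absolute moment of $M_t(\Xi)$. Conditionally on $\xi,\Xi$ this is a sum of independent centered variables, so Rosenthal's inequality (with its universal constant $C$ for $p=3$) gives, conditionally,
\[
\mathbb{E}\left[|M_t(\Xi)|^3\mid\xi,\Xi\right]\le C\left[\sum_{j}\mathbb{E}|\bar Z_{t-u_j}|^3\mathds{1}_{\Xi>u_j}+\left(\sum_{j}\mathbb{E}\bar Z_{t-u_j}^{2}\,\mathds{1}_{\Xi>u_j}\right)^{3/2}\right].
\]
Taking expectations, the first term becomes $\int_{0}^{\infty}\mathbb{E}|\bar Z_{t-u}|^3\,\mathbb{P}(\Xi>u)\,b\,du\le b\int_{-\infty}^{t}\mathbb{E}|\bar Z_s|^3\,ds$, which is $\mathcal{O}(e^{\frac{3}{2}\alpha t})$ by the estimate $\mathbb{E}|Z_s|^3=\mathcal{O}(e^{\frac{3}{2}\alpha s})$ of Lemma \ref{lem:NtBound}. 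For the second term I would use $\tfrac{3}{2}\le2$ and Jensen to bound $\mathbb{E}[(\,\cdot\,)^{3/2}]$ by $(\mathbb{E}[(\,\cdot\,)^{2}])^{3/4}$, then evaluate the second moment of the Poisson integral $\int \mathbb{E}\bar Z_{t-u}^2\mathds{1}_{\Xi>u}\,\xi(du)$ by Lemma \ref{lem:stocInt}; the $L^2$ estimate $\mathbb{E}Z_s^2=\mathcal{O}(e^{\alpha s})$ of Lemma \ref{lem:quadeR} makes both resulting integrals $\mathcal{O}(e^{2\alpha t})$, so this term is also $\mathcal{O}(e^{\frac{3}{2}\alpha t})$. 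As every bound uses only $\mathbb{P}(\Xi>\cdot)\le1$, all estimates are uniform in $\Xi$, and collecting the three contributions yields the claim.

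The main obstacle is precisely the passage from second to third moments of a \emph{random} (Poisson) sum: the triangle inequality is useless, since it would contribute one factor $e^{\frac{\alpha}{2}t}$ per atom and thus an extra factor of order $t$, so one genuinely needs the cancellation coming from conditional independence encoded in Rosenthal's (or Marcinkiewicz--Zygmund's) inequality, fed by the sharp $L^2$ and $L^3$ controls of Lemmas \ref{lem:quadeR} and \ref{lem:NtBound}. The only care required is the bookkeeping of the moment integrals, including the harmless tail $u>t$ where $\bar Z_{t-u}$ reduces to $-e^{\alpha(t-u)}\mathcal{E}$ with exponentially decaying moments.
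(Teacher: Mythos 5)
Your proof is correct, but it follows a genuinely different route from the paper's. The paper neither centers the marks nor invokes Rosenthal's inequality: it writes the cube of the Poisson integral as a triple integral over $[0,t]^3$ against $\xi(ds_1)\xi(ds_2)\xi(ds_3)$, splits the normalisation as $e^{-\frac{\alpha}{2}t}=e^{-\frac{\alpha}{2}(t-s_i)}e^{-\frac{\alpha}{2}s_i}$ in each factor, and applies the elementary inequality $x_1x_2x_3\le x_1^3+x_2^3+x_3^3$ to the factors $x_i=e^{-\frac{\alpha}{2}(t-s_i)}\left|\psi'(\alpha)N^{(\xi_{s_i})}_{t-s_i}-e^{\alpha(t-s_i)}\mathcal{E}_{(s_i)}\right|$ while keeping the weights $e^{-\frac{\alpha}{2}s_i}\mathds{1}_{\Xi>s_i}$ intact. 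By symmetry the triple integral collapses to $3\int x_u^3\,e^{-\frac{\alpha}{2}u}\mathds{1}_{\Xi>u}\,\mu(du)$ with $\mu(du)=\left(\int e^{-\frac{\alpha}{2}s}\xi(ds)\right)^2\xi(du)$; since $\mu$ is a function of $\xi$ alone, hence independent of the marks, Lemma \ref{lem:NtBound} bounds $\mathbb{E}[x_u^3]$ uniformly and the whole estimate reduces to the finiteness of $\mathbb{E}\left[\left(\int_{\mathbb{R}_+}e^{-\frac{\alpha}{2}u}\xi(du)\right)^3\right]$. Thus the paper needs only Lemma \ref{lem:NtBound} as moment input, while your argument additionally consumes Lemma \ref{lem:quadeR} and the first-moment control of Lemma \ref{lem:firstMom}; what yours buys in exchange is a standard, transparent mechanism (conditional Rosenthal) that generalizes verbatim to any moment $p\ge 2$, and a cleaner treatment of the tail $u>t$, which the paper's displayed decomposition over $[0,t]$ silently drops. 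One substantive correction to your closing paragraph: the claim that a crude product bound is useless and that one \emph{genuinely needs} the cancellation from conditional independence is refuted by the paper's own proof, which uses no cancellation at all. The naive objection of "one factor $e^{\frac{\alpha}{2}t}$ per atom" is defeated not by centering but by the per-atom weights $e^{-\frac{\alpha}{2}u}$: the relevant mass is $\int e^{-\frac{\alpha}{2}u}\xi(du)$, which has moments bounded uniformly in $t$, unlike $\xi([0,t])\sim bt$. Finally, a small imprecision on your drift term: over the full horizon $[0,\infty]$ boundedness of $m$ alone does not give an integrable Poisson integrand (take $\Xi=\infty$); you need the exponential decay $m_s=-\frac{\alpha}{b}e^{\alpha s}$ for $s<0$, which you had in fact already established one sentence earlier.
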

\begin{proof}
Rewriting  $N(\Xi)$ and $\mathcal{E}\left(\Xi \right)$ as in the proof of Lemma \ref{lem:genConv}, we see that,
\begin{align*}
e^{-\frac{3}{2}t}\ &\mathbb{E}\left|\psi'(\alpha)N_{t}(\Xi)-e^{\alpha t}\mathcal{E}(\Xi)\right|^{3}=e^{-\frac{3}{2}t}\ \mathbb{E} \left[\left|\int_{[0,t]}\left(\psi'(\alpha)N^{(\xi_{u})}_{t-u}-e^{\alpha (t-u)}\mathcal{E}_{(u)}\right)\mathds{1}_{\Xi>u}\ \xi(du)+\psi'(\alpha)\mathds{1}_{\Xi>t}\right|^{3}\right]\\
&\leq 8 \mathbb{E}\left|\int_{[0,t]}e^{-\frac{3}{2}(t-u)}\left(\psi'(\alpha)N^{(\xi_{u})}_{t-u}-e^{\alpha (t-u)}\mathcal{E}_{(u)}\right)\ e^{-\frac{1}{2}u}\mathds{1}_{\Xi>u} \xi(du)\right|^{3}+8\psi'(\alpha)e^{-\frac{1}{2}t}\mathbb{P}\left(\Xi>t\right)^{3}\\
\end{align*}
We denote by $I$ the first term of the r.h.s. of the last inequality, leading to
\begin{align*}
I&\leq 8 \mathbb{E}\int_{[0,t]^{3}}\prod_{i=1}^{3}\left|e^{-\frac{1}{2}(t-s_{i})}\left(\psi'(\alpha)N^{(\xi_{s_{i}})}_{t-s_{i}}-e^{\alpha (t-s_{i})}\mathcal{E}_{(s_{i})}\right)\right|\ e^{-\frac{1}{2}s_{i}}\mathds{1}_{\Xi>s_{i}} \xi(ds_{1})\xi(ds_{2})\xi(ds_{3})\\
&\leq 8 \mathbb{E}\int_{[0,t]^{3}}\sum_{j=1}^{3}\left|e^{-\frac{1}{2}(t-s_{j})}\left(\psi'(\alpha)N^{(\xi_{s_{j})}}_{t-s_{j}}-e^{\alpha (t-s_{j})}\mathcal{E}_{(s_{j})}\right)\right|^{3}\ \prod_{i=1}^{3}e^{-\frac{1}{2}s_{i}}\mathds{1}_{\Xi>s_{i}} \xi(ds_{1})\xi(ds_{2})\xi(ds_{3})\\
&\leq 24 \mathbb{E}\int_{[0,t]}\left|e^{-\frac{1}{2}(t-u)}\left(\psi'(\alpha)N^{(\xi_{u})}_{t-u}-e^{\alpha (t-u)}\mathcal{E}_{(u)}\right)\right|^{3}\ e^{-\frac{1}{2}u}\mathds{1}_{\Xi>u} \xi(du)\left(\int_{[0,t]}e^{-\frac{1}{2}u}\xi(du) \right)^{2}\\
&\leq 24 \mathbb{E}\int_{[0,t]}\left|e^{-\frac{1}{2}(t-u)}\left(\psi'(\alpha)N^{(\xi_{u})}_{t-u}-e^{\alpha (t-u)}\mathcal{E}_{(u)}\right)\right|^{3}\ e^{-\frac{1}{2}u}\mathds{1}_{\Xi>u}\ \mu(du),\\
\end{align*}
with
\[
\mu(du)=\left(\int_{[0,t]}e^{-\frac{1}{2}s}\xi(ds) \right)^{2}\xi(du).
\]
Now, since $\mu$ is independent from the family $\left(N^{(i)}\right)$ and $\left(\mathcal{E}_{(i)}\right)$, an easy adaptation of the proof of Lemma \ref{lem:stocInt}, leads to
\begin{align*}
e^{-\frac{3}{2}t}\ \mathbb{E}\left|\psi'(\alpha)N_{t}(\Xi)-e^{\alpha t}\mathcal{E}(\Xi)\right|^{3}\leq&24 \mathbb{E}\int_{[0,t]}\mathbb{E}\left[\left|e^{-\frac{1}{2}(t-u)}\left(\psi'(\alpha)N_{t-u}-e^{\alpha (t-u)}\mathcal{E}\right)\right|^{3}\right]\ e^{-\frac{1}{2}u}\mathds{1}_{\Xi>u}\ \mu(du)\\&+8\psi'(\alpha)e^{-\frac{1}{2}t}\mathbb{P}\left(\Xi>t\right)
\end{align*}
Using Lemma \ref{lem:NtBound} to bound
\[
\mathbb{E}\left|e^{-\frac{3}{2}(t-u)}\left(N_{t-u}-e^{\alpha (t-u)}\mathcal{E}\right)\right|^{3},
\]
in the previous expression, finally leads to
\[
e^{-\frac{3}{2}t}\ \mathbb{E}\left|\psi'(\alpha)N_{t}(\Xi)-e^{\alpha t}\mathcal{E}(\Xi)\right|^{3}\leq  \mathcal{C}\ \left(\mathbb{E}\left(\int_{\mathbb{R}_{+}}e^{-\frac{1}{2}u}\xi(du) \right)^{3} \ +1\right),
\]
for some real positive constant $\mathcal{C}$.
\end{proof}

\subsection{Proof of Theorem \ref{thm:tclN}}\label{sec:endProof}
We fix a positive real number $u$.
From this point, we recall the decomposition of the splitting tree as described in Section \ref{sec:strategy} (see also Figure \ref{fig:subtrees}).
We also recall that, for all $i$ in $\left\{ 1,\dots,N_{u}\right\}$, the process $\left(N^{i}_{s}\left(O_{i}\right),\ s\in\mathbb{R}_{+}\right)$ is the population counting process of the (sub-)splitting tree $\mathbb{T}\left(O_{i}\right)$.

As explained in Section \ref{sec:strategy}, it follows from the construction of the splitting tree, that, for all $i$ in $\left\{ 1,\dots,N_{u}\right\}$, there exists an i.i.d.\ family of processes $\left(N^{i,j}\right)_{j\geq 1}$ independent from $N_{u}$ with the same law as $\left(N_{t},\ t\in\mathbb{R}_{+}\right)$, and an i.i.d.\ family $\left(\xi^{(i)}\right)_{1\leq i\leq N_{u}}$ of random measure independent from $N_{u}$ and from $\left(N^{i,j}\right)_{j\geq 1}$ the family with same law as $\xi$, such that
\begin{equation}
\label{eq:definedec}
N_{t}^{i}\left(O_{i}\right)=\int_{[0,t]}N^{i,j}_{t-u}\mathds{1}_{O_{i}> u}\ \xi^{(i)}(du)+\mathds{1}_{O_{i}> t},\quad \forall t\in\mathbb{R}_{+},\quad \forall i\in\left\{ 1,\dots,N_{u}\right\}.
\end{equation}
As in \eqref{eq:ancBranlim}, we define, for all $i$ in $\left\{ 1,\dots,N_{u}\right\}$,
\begin{equation}
\label{eq:defineexp}
\mathcal{E}\left(O_{i}\right):=\int_{[0,t]}\mathcal{E}_{i,\xi^{(i)}_{u}}e^{-\alpha u}\mathds{1}_{O_{i}> u}\ \xi^{(i)}(du),
\end{equation}
where $\mathcal{E}_{i,j}:=\lim\limits_{t\to\infty}\psi'(\alpha)e^{-\alpha t}N^{i,j}_{t}$.

Hence, it follows from Lemma \ref{lem:genConv}, that $e^{-\alpha t}N^{i}_{t}\left(O_{i}\right)$ converges to $\mathcal{E}\left(O_{i}\right)$ in $L^{2}$.

Note also that, from Lemma \ref{lem:residual}, the family $\left(N^{i}_{t}\left(O_{i}\right),\ t\in\mathbb{R}_{+}\right)_{2\leq i\leq N_{u}}$ is i.i.d.\ and independent from $N_{u}$ under $\mathbb{P}_{u}$, as well as the family $\left(\mathcal{E}\left(O_{i}\right)\right)_{2\leq i\leq N_{u}}$ (in the sense of Remark \ref{rem:indep}). Note that the law under $\mathbb{P}_{u}$ of the processes of the family $\left(N^{i}_{t}\left(O_{i}\right),\ t\in\mathbb{R}_{+}\right)_{2\leq i\leq N_{u}}$ is the law of standard population counting processes where the lifespan of the root is distributed as $O_{2}$ under $\mathbb{P}_{u}$ (except for the first one).

\begin{lem}[Decomposition of $\mathcal{E}$]
\label{lem:dec}
We have the following decomposition of $\mathcal{E}$,
\[
\mathcal{E}=e^{-\alpha u}\sum_{i=1}^{N_{u}}\mathcal{E}_{i}\left(O_{i}\right), \quad a.s.
\]
Moreover, under $\mathbb{P}_{u}$, the random variables $\left(\mathcal{E}_{i}\left(O_{i}\right)\right)_{i\geq 1}$ (defined by \eqref{eq:defineexp}) are independent, independent of $N_{u}$, and identically distributed for $i\geq 2$.
\end{lem}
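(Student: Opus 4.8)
The plan is to obtain the decomposition by passing to the limit $t\to\infty$ in the finite-sum identity \eqref{eq:decompNt}. Dividing by $e^{\alpha t}$ and factoring out the deterministic shift, one has, a.s.\ for every $t>u$,
\[
\psi'(\alpha)e^{-\alpha t}N_{t}=e^{-\alpha u}\sum_{i=1}^{N_{u}}\psi'(\alpha)e^{-\alpha(t-u)}N^{i}_{t-u}(O_{i}).
\]
I would fix $u$ and work on each event $\{N_{u}=n\}$, where the right-hand side is a sum of a \emph{deterministic} number $n$ of terms. On this event, Lemma \ref{lem:genConv} (applied with $\Xi=O_{i}$) guarantees that each summand $\psi'(\alpha)e^{-\alpha(t-u)}N^{i}_{t-u}(O_{i})$ converges a.s.\ to $\mathcal{E}_{i}(O_{i})$. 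Since a finite sum of a.s.\ convergent sequences converges a.s.\ to the sum of the limits, letting $t\to\infty$ gives $\mathcal{E}=e^{-\alpha u}\sum_{i=1}^{n}\mathcal{E}_{i}(O_{i})$ on $\{N_{u}=n\}$; taking the union over $n$ yields the claimed identity $\mathbb{P}$-a.s.

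For the distributional statement, I would transfer the independence structure already established for the subtree processes. By Lemma \ref{lem:residual}, under $\mathbb{P}_{u}$ the family $\left(N^{i}_{s}(O_{i}),\ s\in\mathbb{R}_{+}\right)_{1\leq i\leq N_{u}}$ is independent, independent of $N_{u}$, and i.i.d.\ for $i\geq2$. Because each $\mathcal{E}_{i}(O_{i})$ of \eqref{eq:defineexp} is the a.s.\ limit of $\psi'(\alpha)e^{-\alpha(t-u)}N^{i}_{t-u}(O_{i})$, it is a measurable functional of the $i$th subtree alone; hence the mutual independence, the independence from $N_{u}$, and the identical distribution for $i\geq2$ all descend from the corresponding properties of the processes.

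The main obstacle is the careful handling of the random number of summands $N_{u}$ when interchanging the limit and the sum: although the conclusion is intuitively clear, one must argue on the events $\{N_{u}=n\}$ (equivalently, condition on $N_{u}$) to reduce to a genuine finite sum before invoking termwise a.s.\ convergence, and to ensure the exceptional null set can be chosen consistently across $n$. One should also check that the definition \eqref{eq:defineexp} of $\mathcal{E}_{i}(O_{i})$ indeed coincides a.s.\ with the intrinsic limit $\lim_{t}\psi'(\alpha)e^{-\alpha(t-u)}N^{i}_{t-u}(O_{i})$ supplied by Lemma \ref{lem:genConv}; this follows by applying the $L^{2}$-convergence of that lemma to the internal renewal decomposition \eqref{eq:definedec} of each subtree.
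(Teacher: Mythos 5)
Your proposal is correct in substance, but it takes a genuinely different route from the paper. You pass to the limit directly in the decomposition \eqref{eq:decompNt} of $N_{t}$ itself, reducing to a deterministic number of summands on each event $\{N_{u}=n\}$ and invoking the a.s.\ convergence asserted in Lemma \ref{lem:genConv}. The paper never touches the a.s.\ behaviour of $e^{-\alpha t}N_{t-u}(O_{i})$: it works instead with the process $N^{\infty}_{t}$ counting individuals with infinite descent, for which the analogous pathwise decomposition $N^{\infty}_{t}=\sum_{i=1}^{N_{u}}N^{\infty}_{t-u}(O_{i})$ holds; it proves a.s.\ convergence of each $e^{-\alpha t}N^{\infty}_{t-u}(O_{i})$ to $\mathcal{E}(O_{i})$ by dominated convergence over the Poisson-integral representation, the integrable dominating variable $\sup_{t}e^{-\alpha t}N^{j,\infty}_{t}$ being supplied by Doob's inequality because $N^{\infty}$ is a Yule process under $\mathbb{P}_{\infty}$ (Proposition 6.1 and Theorem 6.2 of \cite{CH}); and it then uses that $e^{-\alpha t}N^{\infty}_{t}\to\mathcal{E}$ a.s.

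The one weak point in your argument is precisely your key citation: the a.s.\ part of Lemma \ref{lem:genConv} is \emph{stated} in the paper, but its proof only establishes the $L^{2}$ asymptotics, and proving a.s.\ convergence of $e^{-\alpha t}N_{t}(\Xi)$ directly is delicate exactly when $\mathbb{P}(\Xi=\infty)>0$ (the Poisson integral then has infinitely many atoms, and one needs an integrable envelope for $\sup_{t}e^{-\alpha t}N_{t}$ --- this is the difficulty the paper's detour through $N^{\infty}$ is designed to avoid). Fortunately, your scheme does not really need the a.s.\ statement: since the left-hand side $\psi'(\alpha)e^{-\alpha t}N_{t}$ converges a.s.\ to $\mathcal{E}$, and on $\{N_{u}=n\}$ the right-hand side converges in $L^{2}$ (hence in probability, hence a.s.\ along a subsequence) to $e^{-\alpha u}\sum_{i=1}^{n}\mathcal{E}_{i}(O_{i})$, the two limits must coincide a.s.; so the $L^{2}$ half of Lemma \ref{lem:genConv}, which the paper does prove, suffices. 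With this adjustment your proof is sound, arguably more elementary than the paper's (no $N^{\infty}$, no Doob inequality, no appeal to \cite{CH}), and your treatment of the independence assertions --- the variables $\mathcal{E}_{i}(O_{i})$ are measurable functionals of the independent subtrees of Lemma \ref{lem:residual} --- is the same as the paper's.
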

%
%
%
%
\begin{proof}

{\bf Step 1: Decomposition of $\mathcal{E}$.}

For all $t$ in $\mathbb{R}_{+}$, we denote by $N^{\infty}_{t}$ the number of individuals alive at time $t$ which have an infinite descent. For all $i$, we define, for all $t\geq 0$, $N^{\infty}_{t}\left(O_{i}\right)$ from $\mathbb{T}\left(O_{i}\right)$ as $N^{\infty}_{t }$ was defined from the whole tree.
Now, it is easily seen that
\[
N^{\infty}_{t}=\sum_{i=1}^{N_{u}}N^{\infty}_{t-u}\left(O_{i}\right).
\]
Hence, if $e^{-\alpha t}N^{\infty}_{t}\left(O_{i}\right)$ converges a.s.\ to $\mathcal{E}\left(O_{i}\right)$, then
\[
\lim\limits_{t\to\infty }e^{-\alpha t}N^{\infty}_{t}=\lim\limits_{t\to\infty}e^{-\alpha u}\sum_{i=1}^{N_{u}}e^{-\alpha(t-u)}N^{\infty}_{t-u}\left(O_{i}\right)=e^{-\alpha u}\sum_{i=1}^{N_{u}}\mathcal{E}\left(O_{i}\right).
\]
So, it just remains to prove the a.s.\ convergence to get the desired result.

{\bf Step 2: a.s.\ convergence of $N^{\infty}\left(O_{i}\right)$ to $\mathcal{E}\left(O_{i}\right)$.}

For this step, we fix $i\in \left\{1,\dots, N_{u}\right\}$.

In the same spirit as \eqref{eq:definedec} (see also Section \ref{sec:strategy}), it follows from the construction of the splitting tree $\mathbb{T}\left(O_{i}\right)$, that there exists, an i.i.d.\ (and independent of $N_{u}$) sequence of processes $\left(N^{j,\infty}_{s},\ s\in\mathbb{R}_{+}\right)_{j\geq 1}$ with the same law as $\left(N^{\infty}_{t},\ t\in\mathbb{R}_{+}\right)$ (under $\mathbb{P}$), such that
\[
N^{\infty}_{t}\left(O_{i}\right)=\int_{[0,t]}N^{\xi^{(i)}_{u},\infty}_{t-u}\ \mathds{1}_{O_{i}>u}\ \xi^{(i)}(du)+\mathds{1}_{O_{i}=\infty},\ \forall t\geq 0.
\]
Now, it follows from Theorem 6.2 of \cite{CH}, that for all $j$,
\[
\lim\limits_{t\to\infty}e^{-\alpha t}N^{j,\infty}_{t}=\mathcal{E}_{i,j},\ a.s.,
\]
where $\mathcal{E}_{i,j}$ was defined in the beginning of this section.
Let
\[
\mathcal{C}_{j}:=\sup_{t\in\mathbb{R}_{+}}e^{-\alpha t }N^{j,\infty}_{t}, \quad \forall j\geq 1,
\]
and
\[
\mathcal{C}:=\sup_{t\in\mathbb{R}_{+}}e^{-\alpha t }N^{\infty}_{t}.
\]
Then, the family $\left(\mathcal{C}_{j}\right)_{j\geq 1}$ is i.i.d., since the processes $\left(N^{j,\infty}\right)_{j\geq 1}$ are i.i.d, with the same law as $\mathcal{C}$.
Hence,
\begin{equation}
\label{eq:dixmil}
\int_{[0,t]}e^{-\alpha(t-u)}N^{\xi^{(i)}_{u},\infty}_{t-u}\ e^{-\alpha u}\mathds{1}_{O_{i}>u}\ \xi^{(i)}(du)\leq\int_{[0,t]}\mathcal{C}_{\xi^{(i)}_{u}} e^{-\alpha u}\mathds{1}_{O_{i}>u}\ \xi^{(i)}(du).
\end{equation}
It is easily seen that $\mathbb{E}\left[\mathcal{C}\right]=\mathbb{P}\left(\text{NonEx}\right)\mathbb{E}_{\infty}\left[C\right]$.
Now, since, from Proposition 6.1 of \cite{CH}, $N^{\infty}_{t}$ is a Yule process under $\mathbb{P}_{\infty}$ (and hence $e^{-\alpha t}N^{\infty}_{t}$ is a martingale),  Doobs's inequalities entails that the random variable $\mathcal{C}$ is integrable. Hence, the right hand side of the \eqref{eq:dixmil} is a.s.\ finite, and we can apply Lesbegue Theorem to get
\[
\lim\limits_{t\to\infty}e^{-\alpha t}N^{\infty}_{t}\left(O_{i}\right)=\int_{[0,t]}\mathcal{E}_{i,\xi^{(i)}_{u}}\ e^{-\alpha u}\mathds{1}_{O_{i}>u}\ \Gamma(du)=\mathcal{E}\left(O_{i}\right),\quad a.s.,
\]
where the right hand side of the last equality is just the definition of $\mathcal{E}\left(O_{i}\right)$.

\end{proof}

We have now all the tools needed to prove the central limit theorem for $N_{t}$.
\begin{proof}[Proof of Theorem \ref{thm:tclN}]
Let $u<t$, two positive real numbers.
From Lemma \ref{lem:dec} and section \ref{sec:strategy}, we have
\[
N_{t}=\sum_{i=1}^{N_{u}}N^{(i)}_{t-u}\left(O_{i}\right)
\]
and
\[
e^{\alpha t}\mathcal{E}=\sum_{i=1}^{N_{u}}e^{\alpha (t-u)}\mathcal{E}_{i}\left(O_{i}\right).
\]
Then,
\begin{equation}
\label{eq:decompTclN}
\frac{\psi'(\alpha)N_{t}-e^{\alpha t}\mathcal{E}}{e^{\frac{\alpha}{2} t}}=\sum_{i=1}^{N_{u}}\frac{\psi'(\alpha)N^{(i)}_{t-u}\left(O_{i}\right)-e^{\alpha (t-u)}\mathcal{E}_{i}\left(O_{i}\right)}{e^{\frac{\alpha}{2} (t-u)}e^{\frac{\alpha}{2} u}}.
\end{equation}
Using Lemma \ref{lem:residual}, we know that, under $\mathbb{P}_{u}$, $\left(N^{i}_{t-u}(O_{i}), t>u\right)_{ 1\leq i\leq N_{u}}$ are independent processes, i.i.d.\ for $i\geq2$ and independent of $N_{u}$. Let us denote by $\varphi$ and $\tilde{\varphi}$ the characteristic functions
\[
\varphi(\lambda):=\mathbb{E}\left[\exp\left({i\lambda \left(\frac{\psi'(\alpha)N^{2}_{t-u}\left(O_{2}\right)-e^{\alpha (t-u)}\mathcal{E}_{2}\left(O_{2}\right)}{e^{\frac{\alpha}{2} (t-u)}} \right)}\right) \right],\quad \lambda\in\mathbb{R}
\]
and
\[
\tilde{\varphi}(\lambda):=\mathbb{E}\left[\exp\left({i\lambda \left(\frac{\psi'(\alpha)N^{1}_{t-u}\left(O_{1}\right)-e^{\alpha (t-u)}\mathcal{E}_{1}\left(O_{1}\right)}{e^{\frac{\alpha}{2} (t-u)}} \right)}\right) \right], \quad \lambda\in\mathbb{R}.
\]
It follows from \eqref{eq:decompTclN} and Lemma \ref{lem:residual} that,
\begin{align*}
\mathbb{E}_{u}\left[\exp\left(i\lambda \frac{\psi'(\alpha)N_{t}-e^{\alpha t}\mathcal{E}}{e^{\frac{\alpha}{2} t}}\right) \right]
&=\frac{\tilde{\varphi}\left(\frac{\lambda}{e^{\frac{\alpha}{2} u}}\right)}{\varphi\left(\frac{\lambda}{e^{\frac{\alpha}{2} u}} \right)}\mathbb{E}_{u}\left[\varphi\left(\frac{\lambda}{e^{\frac{\alpha}{2} u}} \right)^{N_{u}} \right]
\end{align*}
Since $N_{u}$ is geometric with parameter $W(u)^{-1}$ under $\mathbb{P}_{u}$,
\[
\mathbb{E}_{u}\left[\exp\left(i\lambda \frac{\psi'(\alpha)N_{t}-e^{\alpha t}\mathcal{E}}{e^{\frac{\alpha}{2} t}}\right) \right]=\frac{\tilde{\varphi}\left(\frac{\lambda}{e^{\frac{\alpha}{2} u}}\right)}{\varphi\left(\frac{\lambda}{e^{\frac{\alpha}{2} u}} \right)}\frac{W(u)^{-1}\varphi\left(\frac{\lambda}{e^{\frac{\alpha}{2} u}} \right)}{1-\left(1-W(u)^{-1}\right)\varphi\left(\frac{\lambda}{e^{\frac{\alpha}{2} u}} \right)}
\]
Using Taylor formula for $\varphi$, we obtain,
\begin{align*}
\mathbb{E}_{u}\left[\exp\left(i\lambda\frac{\psi'(\alpha)N_{t}-e^{\alpha t}\mathcal{E}}{e^{\frac{\alpha}{2} t}}\right) \right]&=\tilde{\varphi}\left(\frac{\lambda}{e^{\frac{\alpha}{2} u}}\right)\frac{1}{D(\lambda,t,u)}
\end{align*}
where,
\begin{align*}
&D(\lambda,t,u)= W(u)\\&-\left(W(u)-1\right)\Bigg(1+i\lambda\mathbb{E}\left[\frac{\psi'(\alpha)N^{i}_{t-u}\left(O_{2}\right)-e^{\alpha (t-u)}\mathcal{E}_{2}\left(O_{2}\right)}{e^{\frac{\alpha}{2} (t-u)}e^{\frac{\alpha}{2} u}}\right]\\&\qquad\qquad\qquad\qquad\qquad\qquad\qquad\qquad\qquad-\frac{\lambda^{2}}{2}\mathbb{E}\left[\left(\frac{\psi'(\alpha)N^{i}_{t-u}\left(O_{2}\right)-e^{\alpha (t-u)}\mathcal{E}_{2}\left(O_{2}\right)}{e^{\frac{\alpha}{2} (t-u)}e^{\frac{\alpha}{2} u}}\right)^{2}\right]+R(\lambda,t,u) \Bigg)\\
&=1-i\lambda\frac{W(u)-1}{e^{\frac{\alpha}{2} u}}\mathbb{E}\left[\frac{\psi'(\alpha)N^{i}_{t-u}\left(O_{2}\right)-e^{\alpha (t-u)}\mathcal{E}_{2}\left(O_{2}\right)}{e^{\frac{\alpha}{2} (t-u)}}\right]\\&\qquad\qquad\qquad\qquad\qquad\qquad\qquad\qquad\qquad+\frac{\lambda^{2}}{2}\frac{W(u)-1}{e^{\alpha u}}\mathbb{E}\left[\left(\psi'(\alpha)\frac{N^{i}_{t-u}\left(O_{2}\right)-e^{\alpha (t-u)}\mathcal{E}_{2}\left(O_{2}\right)}{e^{\frac{\alpha}{2} (t-u)}}\right)^{2}\right]\\
&\qquad\qquad\qquad\qquad\qquad\qquad\qquad\qquad\qquad\qquad\qquad\qquad\qquad\qquad\qquad\qquad\qquad\qquad-(W(u)-1)R(\lambda,t,u),
\end{align*}

with, for all $\epsilon>0$ and all $\lambda$ in $(-\epsilon,\epsilon)$,
\begin{equation}
\label{eq:restEst}
\left|R(\lambda,t,u)\right|\leq\sup_{\lambda\in(-\epsilon,\epsilon)}\left|\frac{\partial^{3}}{\partial\lambda^{
3}}\varphi(\lambda) \right|\leq \mathbb{E}\left[\left|\left(\frac{\psi'(\alpha)N^{i}_{t-u}\left(O_{2}\right)-e^{\alpha (t-u)}\mathcal{E}_{2}\left(O_{2}\right)}{e^{\frac{\alpha}{2} (t-u)}}\right) \right|^{3} \right]\frac{\epsilon^{3} e^{-\frac{3}{2}\alpha u}}{6}\leq C\epsilon^{3}e^{-\frac{3}{2}u},
\end{equation}
for some real positive constant $C$ obtained using Lemma \ref{lem:genBounded}.

From this point, we set $u=\beta t$ with $0<\beta<\frac{1}{2}$. It follows then from the Lemmas \ref{lem:genConv} and \ref{lem:residual}, that
\begin{equation}
\label{eq:lim1}
\lim\limits_{t\to\infty}\mathbb{E}_{\beta t}\left[\left(\frac{\psi'(\alpha)N^{i}_{t-\beta t}\left(O_{2}\right)-e^{\alpha (t-\beta t)}\mathcal{E}_{2}\left(O_{2}\right)}{e^{\frac{\alpha}{2} (t-\beta t)}}\right)^{2}\right]=\psi'(\alpha)\left(2-\psi'(\alpha)\right).
\end{equation}
Moreover, we have from Lemma \ref{lem:firstMom}, and since $\beta<\frac{1}{2}$,
\begin{equation}
\label{eq:lim2}
\lim\limits_{t\to\infty}W(\beta t)e^{-\frac{\alpha}{2}t}\mathbb{E}\left[\psi'(\alpha)N^{i}_{t}\left(O_{2}\right)-e^{\alpha t}\mathcal{E}_{2}\left(O_{2}\right)\right]=0.
\end{equation}
Finally, the relations \eqref{eq:restEst}, \eqref{eq:lim1} and \eqref{eq:lim2} lead to
\[
\lim\limits_{t\to\infty}\mathbb{E}_{\beta t}\left[\exp\left(i\lambda \frac{N_{t}-e^{\alpha t}\mathcal{E}}{e^{\frac{\alpha}{2} t}}\right) \right]=\frac{1}{1+\frac{\lambda^{2}}{2}\left(2-\psi'(\alpha)\right)}.
\]
To conclude, note that,
\begin{align*}
\left|\mathbb{E}_{\beta t}\left[\exp\left(i\lambda \frac{N_{t}-e^{\alpha t}\mathcal{E}}{e^{\frac{\alpha}{2} t}}\right)\right]-\mathbb{E}_{\infty}\left[\exp\left(i\lambda \frac{N_{t}-e^{\alpha t}\mathcal{E}}{e^{\frac{\alpha}{2} t}}\right) \right]\right|&=\left|\mathbb{E}\left[e^{i\lambda \frac{\psi'(\alpha)N_{t}-e^{\alpha t}\mathcal{E}}{e^{\frac{\alpha}{2} t}}} \left(\frac{\mathds{1}_{N_{\beta t}>0}}{\mathbb{P}\left(N_{\beta t}>0\right)}-\frac{\mathds{1}_{\text{NonEx}}}{\mathbb{P}\left(\text{NonEx} \right)} \right) \right]\right|\\
&\leq \mathbb{E}\left[ \left|\frac{\mathds{1}_{N_{\beta t}>0}}{\mathbb{P}\left(N_{\beta t}>0\right)}-\frac{\mathds{1}_{\text{NonEx}}}{\mathbb{P}\left(\text{NonEx} \right)} \right| \right]
\end{align*}
goes to $0$ as $t$ goes to infinity.
This ends the proof of Theorem \ref{thm:tclN}.
\end{proof}

\bibliographystyle{plain}
\bibliography{biblio}
\end{document}